\theoremstyle{plain}
\newtheorem*{theorem*}{Theorem}
\newtheorem{theorem}{Theorem}
\newtheorem{lemma}[theorem]{Lemma}
\newtheorem{proposition}[theorem]{Proposition}
\newtheorem{corollary}[theorem]{Corollary}
\theoremstyle{definition}
\newtheorem{definition}[theorem]{Definition}
\newtheorem{remark}[theorem]{Remark}
\numberwithin{theorem}{subsection}
\numberwithin{equation}{subsection}
\newcommand{\C}{\mathbb{C}}
\newcommand{\N}{\mathbb{N}}
\newcommand{\T}{\mathbb{T}}
\newcommand{\U}{\mathrm{U}}
\newcommand{\cH}{\mathcal{H}}
\newcommand{\cB}{\mathcal{B}}
\newcommand{\cS}{\mathcal{S}}
\newcommand{\cA}{\mathcal{A}}
\newcommand{\cV}{\mathcal{V}}
\newcommand{\cT}{\mathcal{T}}
\newcommand{\mfA}{\mathfrak{A}}
\newcommand{\mfB}{\mathfrak{B}}
\newcommand{\mfC}{\mathfrak{C}}
\newcommand{\mfR}{\mathfrak{R}}
\newcommand{\End}{\mathrm{End}}
\newcommand{\NN}{{\mathrm{N}}}
\begin{document}

\title[Operator algebras and representations of Polish groups]{Operator algebras and unitary representations of Polish groups}

\author{Ra\'ul Quiroga-Barranco}
\address{Centro de Investigaci\'on en Matem\'aticas, Guanajuato, M\'exico}
\email{quiroga@cimat.mx}

\keywords{von Neumann algebras, $C^*$-algebras, Polish groups, unitary representations}

\subjclass[2020]{Primary 46L10; Secondary 46L35}

\maketitle

\begin{abstract}
	We study the relationship between operator algebras, $C^*$ and von Neumann, acting on a Hilbert space and unitary representations of topological groups on the same space. We obtain certain correspondences between both these families of objects. In particular, we prove that the study of Abelian subalgebras of a norm-separable $C^*$-algebra can be, in many aspects, reduced to the study of unitary representations of Polish groups. It is shown that our techniques have applications to the construction and classification of Abelian subalgebras of operator $C^*$-algebras on separable Hilbert spaces. 
\end{abstract}


\section{Introduction} 
\label{sec:intro}
We consider two of the most fundamental objects in functional analysis: operator algebras and unitary group representations. It is well known that these are strongly related through several constructions. We will focus in the following.
\begin{itemize}
	\item Every operator algebra $\mfA$ on a Hilbert space $\cH$, whether $C^*$ or von Neumann, yields the topological group $\U(\mfA)$ of its unitary elements which has a natural representation on $\cH$. 
	\item Every unitary representation of a topological group $G$ on a Hilbert space $\cH$ yields the von Neumann algebra $\End_G(\cH)$ of intertwining operators. And the latter usually contains many interesting $C^*$-subalgebras.
\end{itemize}
We intend to highlight the fact that the two previous constructions are, in some sense, dual or inverse of each other. With this general idea in mind, one of our goals is to show that every $C^*$-algebra can be written, up to some closure, as the von Neumann algebra of intertwining operators for a suitable unitary group representation. We intend to do so in a way useful for further applications.

First, we prove in Theorem~\ref{thm:C*algebras-group} that every $C^*$-subalgebra $\mfA$ of $\cB(\cH)$, where $\cH$ is a Hilbert space, is strong-operator dense in the von Neumann algebra of intertwining operators for a unitary representation on $\cH$ of some topological group. For such an algebra $\mfA$, we show that one can always take $\U(\mfA')$, the unitary group of the commutant of $\mfA$. This same result shows that such group is in fact the larger for which the density in the intertwining algebra can be achieved. Corollary~\ref{cor:AtoU(A')} thus introduces the assignment $\mfA \mapsto \U(\mfA')$ as an important tool with properties that establish the duality mentioned above.

These first results are in fact relatively easy to get from the Double Commutant Theorem. On the other hand, the whole unitary group of a von Neumann algebra, with its natural strong-operator topology, turns out to be quite large as a topological group. Hence, we proceed in the rest of the work to improve our choice of group as well as the conclusions we obtain from it. Our criteria is to consider objects that are accessible through direct integral decompositions. Of course, this requires to restrict ourselves to separable Hilbert spaces.

Norm-separable operator $C^*$-algebras are well known to be well suited to decompose with respect to direct integrals. With the help of this fact, we prove in Theorem~\ref{thm:C*algebra-intdecomp-group} that for a given operator $C^*$-algebra $\mfA$ we can find a subgroup $G$ of the unitary group $\U(\cH)$ whose intertwining algebra densely contains $\mfA$, and so that the $G$-action on $\cH$ decomposes with respect to the central direct integral decomposition of the von Neumann algebra generated by $\mfA$. We refer to section~\ref{sec:Decompositions} for the definitions involved, but we note here that such central direct integral decomposition is the most natural one for which $\mfA$ can be decomposed. In the rest of this introduction we will always refer to this direct integral decomposition. 

The group for which Theorem~\ref{thm:C*algebra-intdecomp-group} achieves its conclusions is in our case $G = \U(\mfB)_\NN$, the group of unitary operators, endowed with the norm topology, of a norm-separable $C^*$-algebra $\mfB$ that generates the commutant of $\mfA$. When this last condition holds we say that $\mfA$ and $\mfB$ are commutant dual (see Definition~\ref{def:commutant-dual}). Theorem~\ref{thm:C*algebra-intdecomp-group} greatly improves the results described above: the group $G = \U(\mfB)_\NN$ is much smaller, norm-separable and with a decomposable representation.

We then proceed to consider the realization of norm-separable operator $C^*$-algebras $\mfA$ as intertwining operators.  Theorem~\ref{thm:C*algebra-normseparable-intdecomp-group} shows that the norm-separability of $\mfA$ allows to decompose simultaneously the operators in $\mfA$ and $G$, where the latter is given as above. And this is achieved so that $\mfA$ consists of intertwining operators for $G$, both globally and on the components. This kind of result should be useful in the study of specific operator $C^*$-algebras.

Furthermore, Corollary~\ref{cor:normseparable-Abelian-intdecomp} proves that for $\mfA$ Abelian, its operators are diagonalizable and the decomposition for the group $G$ obtained in Theorem~\ref{thm:C*algebra-normseparable-intdecomp-group} has (almost everywhere) irreducible components. In particular, collecting some of the information given by Theorem~\ref{thm:C*algebra-normseparable-intdecomp-group} and Corollary~\ref{cor:normseparable-Abelian-intdecomp} we obtain the next result. We refer to the sections below for the needed definitions, here we observe that $\U(\cH)_\NN$ denotes the unitary group of $\cB(\cH)$ endowed with the norm topology.

\begin{theorem*}[Decomposition Theorem for Abelian $C^*$-Algebras]
	Let $\cH$ be a separable Hilbert space and let $\mfA \subset \cB(\cH)$ be an Abelian norm-separable $C^*$-subalgebra of $\cB(\cH)$. Then, there exist a Polish norm-closed subgroup $G \subset \U(\cH)_\NN$ such that with respect to the central direct integral decomposition of $\cH$ associated to $\overline{\mfA}$ and given by $(\cH_p)_{p \in X}$ over $(X,\mu)$, the following properties are satisfied.
	\begin{enumerate}
		\item $\mfA$ is strong-operator dense in the von Neumann algebra of diagonizable operators, and the latter is precisely $\End_G(\cH)$.
		\item There exists a family of unitary representations $(\rho_p)_{p \in X}$ of $G$ of the form $\rho_p : G \rightarrow \U(\cH_p)$, for almost every $p \in X$, for which we have
		\begin{enumerate}
			\item every $U \in G$ is decomposable with decomposition given by $U(p) = \rho_p(U)$, for almost every $p \in X$, and
			\item the unitary representation $\rho_p : G \rightarrow \U(\cH_p)$ is irreducible, for almost every $p \in X$.
		\end{enumerate}
	\end{enumerate}
\end{theorem*}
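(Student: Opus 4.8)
The plan is to recognize this statement as a consolidation of the two preceding results, Theorem~\ref{thm:C*algebra-normseparable-intdecomp-group} and Corollary~\ref{cor:normseparable-Abelian-intdecomp}, specialized to the Abelian case, and to supply the one genuinely new identification, namely that $\End_G(\cH)$ is exactly the algebra of diagonalizable operators. Accordingly, I would first build the group $G$ by the commutant-dual device of Definition~\ref{def:commutant-dual}, and then transport the global commutant relation to the fibers of the central direct integral.

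Setting up: since $\mfA$ is Abelian, so is its strong-operator closure $\overline{\mfA}$, and for an Abelian von Neumann algebra the center is the whole algebra. Hence the central direct integral decomposition $\cH = \int_X^{\oplus}\cH_p\,d\mu(p)$ associated to $\overline{\mfA}$ is precisely the one that diagonalizes $\overline{\mfA}$: the algebra $\overline{\mfA}$ becomes the algebra of diagonalizable operators, the fibers satisfy $\overline{\mfA}_p = \C\,\mathrm{id}_{\cH_p}$ for almost every $p$, and correspondingly the commutant decomposes with full fibers, $\mfA' = \overline{\mfA}' = \int_X^{\oplus}\cB(\cH_p)\,d\mu(p)$. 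I would then choose a unital norm-separable $C^*$-algebra $\mfB$ commutant dual to $\mfA$, so that $\overline{\mfB} = \mfA'$, and put $G = \U(\mfB)_\NN$; the earlier construction guarantees that $G$ is a Polish, norm-closed subgroup of $\U(\cH)_\NN$ whose unitaries generate $\mfB$, hence $\mfA'$, as a von Neumann algebra.

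Deriving (1) and (2a): applying Theorem~\ref{thm:C*algebra-normseparable-intdecomp-group} to this $G$ yields at once that every $U \in G$ is decomposable along the chosen central decomposition, giving $U(p) = \rho_p(U)$ almost everywhere and well-defined maps $\rho_p \colon G \to \U(\cH_p)$, and that $\mfA$ lies strong-operator densely in $\End_G(\cH)$. It then remains to identify this intertwining algebra. Since $\End_G(\cH) = G'$ and the unitaries in $G$ generate $\mfA'$ as a von Neumann algebra, the Double Commutant Theorem gives $\End_G(\cH) = (\mfA')' = \mfA'' = \overline{\mfA}$, which by the previous paragraph is exactly the algebra of diagonalizable operators. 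This establishes (1) and, together with the decomposability, (2a).

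Deriving (2b) and the main obstacle: for irreducibility I would push the commutant relation $G' = \overline{\mfA}$ down to the fibers. On the fiber $\cH_p$ the intertwiners of $\rho_p$ form $\rho_p(G)'$, so by Schur's lemma irreducibility is equivalent to $\rho_p(G)' = \C\,\mathrm{id}_{\cH_p}$. Because $G$ generates $\mfA' = \int_X^{\oplus}\cB(\cH_p)\,d\mu$ with full fibers, the family $\{\rho_p(U)\}_{U \in G}$ should generate $\cB(\cH_p)$ for almost every $p$, forcing $\rho_p(G)' = \C\,\mathrm{id}_{\cH_p}$; this is precisely the content of Corollary~\ref{cor:normseparable-Abelian-intdecomp}. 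I expect the main difficulty to be exactly this transfer: justifying, with the correct almost-everywhere measurability bookkeeping, that the global generation of the decomposable algebra with full fibers descends to generation of $\cB(\cH_p)$ on almost every fiber. Norm-separability of $\mfB$ is what makes this feasible, since it provides a countable norm-dense, hence fiberwise generating, subset of $G$ to which the standard direct-integral commutation theorems apply; controlling the joint null set over this countable family, rather than over all of $G$, is the key technical point.
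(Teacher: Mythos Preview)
Your proposal is correct and matches the paper's approach: the Decomposition Theorem is stated in the introduction precisely as a repackaging of Theorem~\ref{thm:C*algebra-normseparable-intdecomp-group} and Corollary~\ref{cor:normseparable-Abelian-intdecomp}, with $G = \U(\mfB)_\NN$ for $\mfB$ a norm-separable commutant dual of $\mfA$, and you have correctly traced each conclusion back to its source. One small over-emphasis: the identification of $\End_G(\cH)$ with the diagonalizable operators is not really ``new'' but is immediate once you observe that for Abelian $\overline{\mfA}$ the center equals $\overline{\mfA}$ itself, so Proposition~\ref{prop:Abelian-direct-integral} makes $\overline{\mfA}$ the diagonalizable algebra, while Theorem~\ref{thm:C*algebra-normseparable-intdecomp-group} already supplies $\overline{\mfA} = \End_G(\cH)$; likewise your ``main obstacle'' is entirely absorbed by Corollary~\ref{cor:normseparable-Abelian-intdecomp}, as you yourself note, so no additional fiberwise argument is required at this stage.
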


The previous result has an obvious importance of its own. However, it is also related to a problem that has appeared in the study of Toeplitz operators. In \cite{DOQJFA} it was constructed examples of Abelian $C^*$-algebras generated by Toeplitz operators acting on weighted Bergman spaces $\cA^2_\lambda(D)$ over bounded symmetric domains $D$. These examples were obtained from families of symbols invariant under suitable actions of subgroups $K$ of biholomorphisms of $D$. The subgroups $K$ considered in \cite{DOQJFA} have multiplicity-free unitary representations on the weighted Bergman spaces $\cA^2_\lambda(D)$, and their intertwining operators $\End_K(\cA^2_\lambda(D))$ contained a corresponding Abelian $C^*$-algebra $\mfA$ generated by Toeplitz operators. More precisely, the Abelian $C^*$-algebra $\mfA$ chosen was the intersection of the full Toeplitz algebra with $\End_K(\cA^2_\lambda(D))$. This sort of results has brought very active and interesting research in operator theory of analytic function spaces. We refer to \cite{DOQJFA,QVUnitBall1,QVUnitBall2}, and the references therein, for a sample of such research. Within this setup and notation, the following questions have been raised.

\begin{itemize}
	\item For $\mfA$ an Abelian $C^*$-algebra as above, is $\mfA$ strong-operator dense in the von Neumann algebra $\End_K(\cA^2_\lambda(D))$?
	\item Given an Abelian $C^*$-subalgebra $\mfA$ of the full Toeplitz algebra acting on $\cA^2_\lambda(D)$, is there a biholomorphism subgroup $K$ acting on $D$ such that $\mfA$ is strong-operator dense in $\End_K(\cA^2_\lambda(D))$?
	\item If this can be achieved, is it possible to decompose both $\mfA$ and the representation of $K$ with respect to some direct integral decomposition of $\cH$ so as to diagonalize the former?
\end{itemize}

The first question was answered affirmatively in \cite{DOQJFA} for the case of $K$ a compact subgroup of the biholomorphism group of $D$. The rest of the cases and other questions remained open except for some particular examples. With this respect, the general discussion considered in this work is clearly an extension of the previous problems from analytic function spaces to arbitrary separable Hilbert spaces. Furthermore, our Decomposition Theorem for Abelian $C^*$-Algebras stated above gives affirmative answers to the questions just formulated even for the more general setup of operators on an arbitrary separable Hilbert space. The only restriction is that one has to consider Polish norm-closed subgroups of the full unitary operator group instead of unitary representations induced from biholomorphism subgroups. Nevertheless, the main properties expected are achieved: for example, the decomposition of the unitary representation into irreducible components. This is not a trivial conclusion since the group $G$ above is in general far from being a Lie group or even a locally compact group.

We further develop our theory by providing in Theorem~\ref{thm:maximal-vN-group} a characterization of norm-separable Abelian $C^*$-algebras that generate maximal von Neumann algebras in terms of corresponding Polish norm-closed groups of unitary elements. This allows us to formulate in Corollary~\ref{cor:maximal-vN-group} a correspondence, bijective up to some strong-operator closure, between norm-separable $C^*$-algebras and Polish norm-closed subgroups when both are maximal Abelian in a suitable sense. This is again motivated by the work on weighted Bergman spaces and Abelian $C^*$-algebras generated by Toeplitz operators. We refer to \cite{QVUnitBall1,QVUnitBall2} where it was proved that for the unit ball and the corresponding Siegel domain the maximal Abelian subgroups of biholomorphisms yield Abelian $C^*$-algebras generated by Toeplitz operators.

Next, we consider the problem of describing the Abelian $C^*$-subalgebras of a given norm-separable operator $C^*$-algebra $\cT$ acting on a separable Hilbert space $\cH$. As noted in the previous discussion, such Abelian algebras can be obtained from Polish norm-closed subgroups of $\U(\cH)_\NN$. We show in Corollary~\ref{cor:Abelian-subalgebras-cT} and Theorem~\ref{thm:Abelian-subalg-vs-multiplicity-free} that the family of groups that have to be considered may be limited by the properties of the ambient algebra $\cT$ that contains the Abelian algebras to be studied. 

On the other hand, and as a by-product of our theory, we obtain some results for unitary representations of Polish norm-closed subgroups $G$ of the unitary group $\U(\cH)_\NN$ of a separable Hilbert space $\cH$. In the first place, Corollary~\ref{cor:group-decomposition} shows that the representation on $\cH$ of any such group $G$ decomposes with respect to the central direct integral decomposition of $\cH$ associated to the von Neumann algebra generated by $G$. In the second place, Corollary~\ref{cor:group-decomposition-multfree} proves that if the representation of such $G$ on $\cH$ is multiplicity-free, then the decomposition of $G$ so obtained is given by irreducible pieces. These results have an obvious interest for the theory of topological groups. The reason is that one can usually achieve this sort of conclusions for Lie or locally compact groups of a suitable type. However, these corollaries consider arbitrary norm-separable norm-closed subgroups of $\U(\cH)_\NN$.

In order to better understand the difference between our approach with Polish groups and the usual techniques from Lie theory, we present in the last subsection of this work a comparison between the use of compact groups and the corresponding Polish groups that replace them. This also allows to see that, even in the simplest of examples, the Polish groups that we study are not locally compact.

As for the content of this work, sections~\ref{sec:U(H)-Polish-algebras} and \ref{sec:Decompositions} provide the background and notation coming from well known facts of operator theory. Section~\ref{sec:operator-alg-topgroups} contains our preliminary results that are obtained without the separability assumption on Hilbert spaces, while section~\ref{sec:operator-alg-Polishgroups} presents the main results discussed in this introduction.

\section{Operator algebras and unitary groups}
\label{sec:U(H)-Polish-algebras}

\subsection{Some facts and conventions}
A few elementary remarks are in order for this work. We recall that the Double Commutant Theorem states that, for any Hilbert space $\cH$, if $\mfA$ is a $*$-subalgebra of $\cB(\cH)$ that contains the identity operator, then $\mfA''$ is the von Neumann algebra generated by $\mfA$, and that it is also its closure with respect to both the strong-operator and weak-operator topologies. For this reason, we will also denote by $\overline{\mfA} = \mfA''$ such common closure. We also recall that for any subset $\cS \subset \cB(\cH)$ we have that $\cS' = \cS'''$ and $\cS'$ is a von Neumann algebra if $\cS$ is self-adjoint. In particular, if $\mfA$ is a $C^*$-subalgebra of $\cB(\cH)$, then $\mfA'$ is a von Neumann algebra. When we consider a $C^*$-subalgebra of $\cB(\cH)$ we will assume that it contains the identity operator. Finally, we recall the basic fact that every element in a $C^*$-algebra is the linear combination of (four) unitary elements in the algebra. We will use all of these fundamental facts and assumptions in the rest of this work without further mention. 

\subsection{The unitary group of an operator algebra}
From now on, we will denote by $\U(\cH)$ the group of unitary elements in the von Neumann algebra $\cB(\cH)$ of bounded operators acting on a Hilbert space $\cH$. We recall that the weak-operator, strong-operator and strong-operator-$*$ topologies restricted to $\U(\cH)$ are all the same. That the first two topologies coincide on $\U(\cH)$ follows, for example, from \cite[Exercise~5.7.5]{KRvolI}. Then, the strong-operator continuity of $A \mapsto A^*$ on the set of normal operators (see~\cite[Remark~2.5.10]{KRvolI}) implies that all three topologies coincide on $\U(\cH)$. From now on, we will consider $\U(\cH)$ endowed with this topology.

On the other hand, it is well known (see \cite[Remark~2.5.10]{KRvolI}) that the product $(A,B) \mapsto AB$ in $\cB(\cH)$ is strong-operator continuous when the first variable lies in a bounded set, and so it is continuous on $\U(\cH)$. Since the map $A \mapsto A^*$ is strong-operator continuous on $\U(\cH)$, it follows that $\U(\cH)$ is a topological group.

Recall that a Polish space is a separable completely metrizable topological space. With this notation, if $\cH$ is separable, then the closed unit ball $(\cB(\cH))_1$ of $\cB(\cH)$ with the strong-operator-$*$ topology is a Polish space and $\U(\cH)$ is closed in it (see~\cite[Exercise~14.4.9]{KRvolII}). It follows that, for $\cH$ a separable Hilbert space, $\U(\cH)$ is a Polish group: i.e.~a topological group whose topology is Polish.

The group $\U(\cH)$ is weak-operator dense in the closed unit ball $(\cB(\cH))_1$ when $\cH$ is infinite-dimensional (see \cite[Exercise~5.7.12(i)]{KRvolI}), in which case $\U(\cH)$ is a non-compact topological group. This is a well known fact that is in clear contrast with the behavior in finite-dimensional cases. 

Let us now consider a von Neumann algebra $\mfR$ acting on a Hilbert space $\cH$. We will denote by $\U(\mfR) = \U(\cH) \cap \mfR$ the group of unitary elements in $\mfR$, which we will endow in the rest of this work with the topology induced from $\U(\cH)$. In particular, $\U(\mfR)$ is a closed topological subgroup of $\U(\cH)$. Furthermore, we have the next result that follows from the previous discussion, \cite[Exercise~14.4.9]{KRvolII} and the (easy to prove) fact that the closed unit ball in every von Neumann algebra is strong-operator-$*$ closed.

\begin{proposition}\label{prop:U(R)-topgroup-vN}
	Let $\mfR$ be a von Neumann algebra acting on a Hilbert space $\cH$. Then, the group $\U(\mfR)$ is a topological group which is Polish when $\cH$ is separable. Furthermore, $\U(\mfR)$ is a closed subset of both $\U(\cH)$ and $(\mfR)_1$, where the latter is the closed unit ball in $\mfR$ and it is given the strong-operator-$*$ topology. In particular, $\U(\mfR)$ is strong-operator-$*$ closed in $\cB(\cH)$. Also, the weak-operator, strong-operator and strong-operator-$*$ topologies all coincide on~$\U(\mfR)$.
\end{proposition}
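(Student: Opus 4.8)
The plan is to build everything on top of the facts already collected for $\U(\cH)$ and for the strong-operator-$*$ topology on $(\cB(\cH))_1$, treating $\U(\mfR)$ throughout as the intersection $\U(\cH) \cap \mfR$. First I would check that $\U(\mfR)$ is genuinely a subgroup of $\U(\cH)$: since $\mfR$ is a $*$-subalgebra, the product of two of its unitaries, and the inverse (equal to the adjoint) of one of its unitaries, again lie in $\mfR$, so $\U(\mfR)$ is closed under the group operations. Because $\U(\cH)$ is a topological group in the strong-operator topology, any subgroup with the induced topology is again a topological group, which settles that assertion. The coincidence of the weak-operator, strong-operator and strong-operator-$*$ topologies on $\U(\mfR)$ is then immediate: all three already coincide on $\U(\cH)$, and each restricts to the corresponding topology on the subset $\U(\mfR)$.

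The substance of the statement lies in the closedness claims. The key preliminary step I would isolate is the asserted ``easy'' fact that $(\mfR)_1$ is strong-operator-$*$ closed in $\cB(\cH)$. For this I would note that $\mfR$, being strong-operator closed, is a fortiori strong-operator-$*$ closed, since the strong-operator-$*$ topology is finer and hence has at least as many closed sets; and that the unit ball $(\cB(\cH))_1$ is strong-operator closed because the operator norm is lower semicontinuous for strong-operator convergence, so it too is strong-operator-$*$ closed. Intersecting the two exhibits $(\mfR)_1 = \mfR \cap (\cB(\cH))_1$ as a strong-operator-$*$ closed set.

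With this in hand the remaining inclusions are formal. Observing that every unitary has norm one, I would record the identity $\U(\mfR) = \U(\cH) \cap (\mfR)_1$. Since $\U(\cH)$ is strong-operator-$*$ closed in $(\cB(\cH))_1$ by \cite[Exercise~14.4.9]{KRvolII}, its intersection with the subspace $(\mfR)_1$ is closed in $(\mfR)_1$, so $\U(\mfR)$ is closed in $(\mfR)_1$; and because $(\mfR)_1$ is in turn closed in $\cB(\cH)$ for the strong-operator-$*$ topology, $\U(\mfR)$ is strong-operator-$*$ closed in $\cB(\cH)$. The same intersection, read as $\U(\mfR) = \mfR \cap \U(\cH)$, exhibits $\U(\mfR)$ as a strong-operator closed subset of $\U(\cH)$. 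Finally, for separable $\cH$ the space $(\cB(\cH))_1$, and hence $\U(\cH)$, is Polish, and a closed subset of a Polish space is Polish; since $\U(\mfR)$ is closed in $\U(\cH)$, this yields the Polish conclusion.

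I do not expect a genuine obstacle here, as every ingredient is either cited or elementary. The one point deserving care is the bookkeeping of the four ambient sets $\cB(\cH)$, $(\cB(\cH))_1$, $\mfR$ and $(\mfR)_1$, matching each closedness assertion to the precise topology in which it is claimed: ``closed in the unit ball'' and ``strong-operator-$*$ closed in $\cB(\cH)$'' are distinct statements, bridged only by the strong-operator-$*$ closedness of $(\mfR)_1$ established in the second step.
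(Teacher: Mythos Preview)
Your proposal is correct and follows exactly the approach the paper indicates: the paper does not give a detailed proof but states that the result ``follows from the previous discussion, \cite[Exercise~14.4.9]{KRvolII} and the (easy to prove) fact that the closed unit ball in every von Neumann algebra is strong-operator-$*$ closed,'' which is precisely the chain of ingredients you unpack. Your handling of the bookkeeping between the four ambient sets and the various topologies is accurate and fills in what the paper leaves to the reader.
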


For any Hilbert space $\cH$, it is well known that the group of invertible elements in $\cB(\cH)$ is a topological group with respect to the norm topology, with the subset of unitary operators being a closed topological subgroup. In particular, if $\mfA$ is a $C^*$-subalgebra of $\cB(\cH)$, then the group of unitary elements in $\mfA$ is a topological group which is norm closed in $\mfA$. We will denote by $\U(\mfA)_\NN$ the group of unitary elements in the $C^*$-algebra $\mfA$ endowed with the norm topology, where the subindex $\NN$ is placed to emphasize the use of such topology. This will avoid confusion in the case where the $C^*$-algebra is a von Neumann algebra as well. As a matter of fact, for every von Neumann algebra $\mfR$ acting on the Hilbert space $\cH$ both $\U(\mfR)$ and $\U(\mfR)_\NN$ are topological groups with the same underlying set: the unitary elements of $\mfR$. However, the former carries the strong-operator topology while the latter carries the norm topology. 

Some of the previous remarks about $\U(\mfA)_\NN$ are collected in the next result.

\begin{proposition}\label{prop:U(A)-topgroup-C*}
	Let $\cH$ be a Hilbert space and $\mfA$ be a $C^*$-subalgebra of $\cB(\cH)$. Then, the group $\U(\mfA)_\NN$ is a topological group which is Polish when $\mfA$ is norm-separable. Furthermore, $\U(\mfA)_\NN$ is norm closed in $\cB(\cH)$.
\end{proposition}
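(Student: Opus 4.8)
The plan is to verify the three assertions separately, each reducing to elementary properties of the norm topology on a $C^*$-algebra. Throughout I would use that a $C^*$-subalgebra $\mfA \subset \cB(\cH)$ is by definition norm closed, hence a Banach space, and that $\U(\mfA)_\NN$ is realized as a subset of $\mfA$ carrying the restriction of the norm topology. To see that $\U(\mfA)_\NN$ is a topological group I would recall, as already noted in the text preceding the statement, that the product $(A,B) \mapsto AB$ is norm continuous on all of $\cB(\cH)$ by the submultiplicativity estimate $\|AB - A_0 B_0\| \le \|A - A_0\|\,\|B\| + \|A_0\|\,\|B - B_0\|$, and that on the unitaries inversion coincides with the adjoint $A \mapsto A^*$, which is a norm isometry. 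Restricting these continuous maps to the unitary elements of $\mfA$ then shows that both the multiplication and the inversion of $\U(\mfA)_\NN$ are continuous.

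For the norm closedness I would argue that $\U(\mfA)_\NN$ is the intersection of two norm-closed subsets of $\cB(\cH)$. On one hand $\mfA$ is norm closed, being a $C^*$-algebra. On the other hand the set of unitary operators in $\cB(\cH)$ is norm closed: if $U_n \to U$ in norm with each $U_n$ unitary, then, again using continuity of the product and of the adjoint, $U^*U = \lim U_n^* U_n = I$ and likewise $U U^* = I$, so that $U$ is unitary. Hence the set of unitary elements of $\mfA$, which is exactly this intersection, is norm closed in $\cB(\cH)$.

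Finally, the Polish property under norm-separability follows by combining three standard facts about the norm topology. It is metrizable, being induced by the operator norm; its restriction to $\U(\mfA)_\NN$ is completely metrizable because the latter is a norm-closed subset of the complete space $\mfA$, and a closed subset of a complete metric space is complete; and it is separable because $\U(\mfA)_\NN$ is a subspace of the separable metric space $\mfA$. Together these give that $\U(\mfA)_\NN$ is separable and completely metrizable, i.e.~Polish.

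I do not expect any genuine obstacle, since the statement packages routine facts. If anything merits care, it is the last step: one must use that separability passes to arbitrary subspaces in the metric (equivalently second-countable) setting, which is what makes the norm-separability of $\mfA$ descend to $\U(\mfA)_\NN$, and that it is norm closedness, rather than closedness in some weaker topology, that supplies completeness.
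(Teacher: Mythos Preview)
Your proposal is correct. The paper does not give a separate proof of this proposition; it is stated as a summary of the remarks immediately preceding it, where the relevant facts (norm continuity of multiplication and of the adjoint, and norm closedness of the unitaries) are recalled as well known. Your argument simply spells out those remarks in detail, so it matches the paper's approach.
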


We recall the following consequence of Kaplansky Density Theorem (see \cite[Corollary~5.3.7]{KRvolI}).

\begin{proposition}\label{prop:KaplanskyUnitaryDensity}
	Let $\cH$ be a Hilbert space and $\mfA$ be a $C^*$-subalgebra of $\cB(\cH)$. Then, the group $\U(\mfA)_\NN$ is strong-operator dense in the group $\U(\overline{\mfA})$. In other words, we have $\overline{\U(\mfA)_\NN} = \U(\overline{\mfA})$, where the first closure is taken in $\U(\overline{\mfA})$ with respect to its (strong-operator) topology
\end{proposition}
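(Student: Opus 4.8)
The statement is precisely the unitary form of the Kaplansky Density Theorem, recorded here in the notation $\overline{\mfA} = \mfA''$ and under the standing assumption that $\mfA$ contains the identity operator. The quickest route is therefore to match notation and invoke the cited corollary directly. To treat the statement as genuinely substantive, however, I would deduce it from the self-adjoint version of the density theorem, which asserts that the self-adjoint part of the unit ball of $\mfA$ is strong-operator dense in the self-adjoint part of the unit ball of $\overline{\mfA}$, \emph{with control on the norm}.

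The plan is as follows. Fix $U \in \U(\overline{\mfA})$. Using the spectral theorem in the von Neumann algebra $\overline{\mfA}$, write $U = \exp(iH)$ for some self-adjoint $H \in \overline{\mfA}$ with $\|H\| \le \pi$ (apply a branch of the argument to $U$ via the Borel functional calculus, which keeps $H$ inside the von Neumann algebra $\overline{\mfA}$). Apply the self-adjoint Kaplansky Density Theorem to $H$: it produces a net $(H_\alpha)$ of self-adjoint elements of $\mfA$ with $\|H_\alpha\| \le \|H\|$ and $H_\alpha \to H$ in the strong-operator topology. Set $U_\alpha = \exp(iH_\alpha)$. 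Since $H_\alpha = H_\alpha^* \in \mfA$ and $\mfA$ is a norm-closed $*$-subalgebra, $U_\alpha$ is a unitary element of $\mfA$, that is, $U_\alpha \in \U(\mfA)_\NN$.

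It remains to show $U_\alpha \to U$ in the strong-operator topology, and this is where the real work lies. The claim reduces to the strong-operator continuity of the functional calculus $T \mapsto f(T)$, for the fixed function $f(t) = e^{it}$, restricted to the set of self-adjoint operators of norm at most $\pi$. I would prove this by Stone--Weierstrass: approximate $f$ uniformly on $[-\pi,\pi]$ by polynomials $p_n$, so that $\|p_n(H_\alpha) - U_\alpha\|$ and $\|p_n(H) - U\|$ are uniformly small in $\alpha$ (using $\|H_\alpha\|, \|H\| \le \pi$), and then use that each polynomial functional calculus $T \mapsto p_n(T)$ is strong-operator continuous on norm-bounded sets, because multiplication is jointly strong-operator continuous there. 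A standard three-$\varepsilon$ estimate then yields $U_\alpha \to U$ strongly. The \textbf{main obstacle} is precisely this continuity step, and it is exactly where the uniform norm bound $\|H_\alpha\| \le \|H\|$ supplied by the density theorem is indispensable: without it the polynomial approximation is uncontrolled and joint strong-operator continuity of multiplication fails.

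Finally, I would note that the labeling of the group as $\U(\mfA)_\NN$, carrying the norm topology, is immaterial to the density assertion: the closure in the statement is taken in the strong-operator topology of $\U(\overline{\mfA})$, so the content is simply that the set of unitaries of $\mfA$ is strong-operator dense in the set of unitaries of $\overline{\mfA}$, which is exactly what the construction above delivers. Hence $\overline{\U(\mfA)_\NN} = \U(\overline{\mfA})$.
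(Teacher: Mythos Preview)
Your argument is correct; it is essentially the standard proof of the cited Corollary~5.3.7 in Kadison--Ringrose, which the paper simply invokes without reproducing. The only point worth double-checking is that the Borel functional calculus producing $H = -i\log U$ indeed lands in the von Neumann algebra $\overline{\mfA}$ (it does, since $\overline{\mfA}$ is closed under Borel functional calculus of its normal elements), and you have handled the crucial norm-boundedness needed for strong-operator continuity of $t \mapsto e^{it}$ correctly.
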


\section{Operator $C^*$-algebras and topological groups}
\label{sec:operator-alg-topgroups}
\subsection{Intertwining maps of topological groups}
A unitary representation of a topological group $G$ on a Hilbert space $\cH$ is a continuous action $\rho : G \times \cH \rightarrow \cH$ such that, for every fixed $g \in G$, the map $\rho(g) : \cH \rightarrow \cH$ given by $\rho(g)(x) = \rho(g,x)$ is unitary. The continuity of the action is well known to be equivalent to the continuity of the homomorphism $\rho : G \rightarrow \U(\cH)$ given by $g \mapsto \rho(g)$, where $\U(\cH)$ is endowed with the strong-operator topology, the one that we have agreed to use on $\U(\cH)$. For this reason, we will use $\rho$ to denote both the action map and the homomorphism, just considered, interchangeably without further mention.

We will consider two fundamental examples for our purposes.
\begin{enumerate}
	\item Let $\mfR$ be a von Neumann algebra acting on the Hilbert space $\cH$, then the action $\U(\mfR) \times \cH \rightarrow \cH$ yields a unitary representation of the topological group $\U(\mfR)$ on $\cH$.
	\item Let $\mfA$ be a $C^*$-subalgebra of $\cB(\cH)$, where $\cH$ is a Hilbert space, then the action of $\U(\mfA)_\NN \times \cH \rightarrow \cH$ yiels a unitary representation of the topological group $\U(\mfA)_\NN$ on $\cH$.
\end{enumerate}

For a given unitary representation $\rho$ of a topological group $G$ on a Hilbert space $\cH$, a bounded operator $T \in \cB(\cH)$ that satisfies $T \circ \rho(g) = \rho(g) \circ T$, for every $g \in G$, is called an intertwining operator. The space of all such intertwining operators will be denoted by $\End_G(\cH)$. Note that this space is precisely the commutant $\rho(G)'$ of $\rho(G)$ in $\cB(\cH)$, and so $\End_G(\cH)$ is a von Neumann algebra acting on $\cH$. 

\subsection{Operator $C^*$-algebras as intertwining maps (topological groups)}
The next result proves that every $C^*$-algebra of operators acting on a Hilbert space is strong-operator dense in the von Neumann algebra of intertwining operators for some unitary representation. Moreover, this result exhibits the largest topological group for which this holds. 

\begin{theorem}\label{thm:C*algebras-group}
	Let $\cH$ be a Hilbert space and $\mfA$ be a $C^*$-subalgebra of $\cB(\cH)$. Then, the following properties hold.
	\begin{enumerate}
		\item The $C^*$-algebra $\mfA$ is strong-operator dense in $\End_{\U(\mfA')}(\cH)$: i.e.~we have $\overline{\mfA} = \End_{\U(\mfA')}(\cH)$.
		\item If $\rho : G \rightarrow \U(\cH)$ is a unitary representation of a topological group $G$, then we have
		\begin{enumerate}
			\item $\mfA \subset \End_G(\cH)$ if and only if the von Neumann algebra generated by $\rho(G)$ is contained in $\mfA'$ if and only if $\rho(G) \subset \U(\mfA')$.
			\item $\overline{\mfA} = \End_G(\cH)$ if and only if the von Neumann algebra generated by $\rho(G)$ is $\mfA'$. This is the case when $\rho(G)$ is weak-operator dense in~$\U(\mfA')$.
		\end{enumerate}
	\end{enumerate}
	Furthermore, if $\cH$ is separable, then $\U(\mfA')$ is a Polish group.
\end{theorem}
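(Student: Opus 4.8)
The plan is to reduce the whole statement to two facts that are already on the table: the Double Commutant Theorem, and the convention—recorded above—that every element of a $C^*$-algebra is a linear combination of four unitaries in that algebra. Applying the second fact to $\mfA'$ (which is a von Neumann algebra because $\mfA$ is self-adjoint), every element of $\mfA'$ is a linear combination of unitaries in $\mfA'$, so the linear span of $\U(\mfA')$ is all of $\mfA'$ and, in particular, the von Neumann algebra generated by $\U(\mfA')$ is exactly $\mfA'$. This is the one reduction the whole theorem turns on: it lets me trade $\U(\mfA')$ for $\mfA'$ inside any commutant, since a set and the von Neumann algebra it generates have the same commutant. Everything else is bookkeeping.

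For part (1), I would note that $\End_{\U(\mfA')}(\cH)$ is by definition the commutant $\U(\mfA')'$; since $\U(\mfA')$ generates $\mfA'$, this equals $(\mfA')' = \mfA''$, which the Double Commutant Theorem identifies with $\overline{\mfA}$. For part (2)(a), I would use that commutation is symmetric: $\mfA \subset \End_G(\cH) = \rho(G)'$ says exactly that every element of $\mfA$ commutes with every $\rho(g)$, equivalently $\rho(G) \subset \mfA'$. As each $\rho(g)$ is unitary, this is the same as $\rho(G) \subset \U(\cH) \cap \mfA' = \U(\mfA')$, the third condition; and since $\mfA'$ is a von Neumann algebra, it is also equivalent to the von Neumann algebra generated by $\rho(G)$ being contained in $\mfA'$, the second condition.

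For part (2)(b), write $\mathcal{M}$ for the von Neumann algebra generated by $\rho(G)$, so $\End_G(\cH) = \rho(G)' = \mathcal{M}'$. Then $\overline{\mfA} = \End_G(\cH)$ reads $\mfA'' = \mathcal{M}'$; taking commutants and using $\mfA''' = \mfA'$ together with $\mathcal{M}'' = \mathcal{M}$ converts this into $\mfA' = \mathcal{M}$, and the converse is the same computation reversed. For the last sentence of (2)(b): if $\rho(G)$ is weak-operator dense in $\U(\mfA')$ then $\rho(G) \subset \U(\mfA') \subset \mfA'$ gives $\mathcal{M} \subset \mfA'$, while the weak-operator closure of $\rho(G)$ contains $\U(\mfA')$, so $\mathcal{M}$ contains the von Neumann algebra generated by $\U(\mfA')$, namely $\mfA'$; hence $\mathcal{M} = \mfA'$. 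Finally, the Polish claim is immediate from Proposition~\ref{prop:U(R)-topgroup-vN} applied to the von Neumann algebra $\mfA'$. I do not anticipate a real obstacle; the only delicate point is to invoke the ``linear combination of unitaries'' fact at precisely the step where $\U(\mfA')$ must be exchanged for $\mfA'$, and to keep the order-reversing nature of the commutant straight when passing between the two equalities in (2)(b).
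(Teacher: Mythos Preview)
Your proof is correct and follows essentially the same approach as the paper: both hinge on the fact that $\U(\mfA')$ linearly spans $\mfA'$ to establish (1), and then unwind commutants for (2). The only cosmetic difference is in the final clause of (2)(b), where the paper passes through the linear span $\cV$ of $\rho(G)$ as a $*$-algebra before taking closures, while you argue directly that $\U(\mfA')$ lies in the weak-operator closure of $\rho(G)$ and hence in $\mathcal{M}$; both routes are equally short and valid.
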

\begin{proof}
	Since $\mfA'$ is linearly generated by $\U(\mfA')$ it follows that $\overline{\mfA} = \mfA'' = \U(\mfA')' = \End_{\U(\mfA')}(\cH)$. This proves the first claim.
	
	For the second claim, let us now consider a unitary representation $\rho : G \rightarrow \U(\cH)$. Then, we have the following equivalences
	\[
		\mfA \subset \End_G(\cH) 
				\Longleftrightarrow \mfA \subset \rho(G)'
				\Longleftrightarrow \rho(G)'' \subset \mfA',
	\]
	which prove the first equivalence in (2a). If $\rho(G)'' \subset \mfA'$ holds, then we have $\rho(G) \subset \U(\mfA')$ because $\rho(G)$ is a subset of both $\rho(G)''$ and $\U(\cH)$. Conversely, the inclusion $\rho(G) \subset \U(\mfA')$ implies $\rho(G)'' \subset \U(\mfA')'' = \mfA'$. This completes the proof of~(2a). 
	
	The equivalence in (2b) is obtained once we note that the above displayed equivalences hold after replacing $\mfA$ with $\overline{\mfA}$ and the inclusions with identities. 
	
	Finally, let us assume that $\rho(G)$ is weak-operator dense in $\U(\mfA')$. Hence, if we denote by $\cV$ the space of finite linear combinations of elements in $\rho(G)$, then it follows that the weak-operator closure $\overline{\cV}$ is precisely $\mfA'$. Next, we note that $\cV$ is a $*$-algebra since $\rho(G)$ is a group consisting of unitary elements. Hence, we conclude that
	\[
		\rho(G)'' = \cV'' = \overline{\cV} = \mfA',
	\]
	i.e.~the von Neumann algebra generated by $\rho(G)$ is $\mfA'$. This completes the proof of~(2b). The last claim follows from Proposition~\ref{prop:U(R)-topgroup-vN}.
\end{proof}

\begin{remark}\label{rmk:C*algebras-group-counterexample}
	The converse in the second claim of Theorem~\ref{thm:C*algebras-group}(2b) does not hold in general as the next easy example shows. For any Hilbert space $\cH$, let us take $\mfA = \cB(\cH)$ and $G = \{I\}$ with its natural action on $\cH$, so that we obtain $\U(\mfA') = \T I$. For this setup, we have $\mfA = \End_G(\cH)$ but $G$ is a proper closed subgroup of $\U(\mfA')$.
\end{remark}

\begin{remark}\label{rmk:C*algebras-group}
	For a given $C^*$-algebra $\mfA$ of operators acting on a Hilbert space $\cH$, Theorem~\ref{thm:C*algebras-group} proves that there is always a solution to the problem of finding a unitary representation $\rho$ of a topological group $G$ such that $\overline{\mfA} = \End_G(\cH)$, and that such solution can be assumed to be given by a Polish group when $\cH$ is separable. Furthermore, it also exhibits conditions on such unitary representations that can be considered as upper and lower bounds on $\rho$ in terms of $\mfA$. Theorem~\ref{thm:C*algebras-group}(2a) imposes an upper bound condition, since it shows that any solution $\rho$ must be such that $\rho(G) \subset \U(\mfA')$, i.e.~the unitary operators given by $\rho$ must lie among the unitary elements of the von Neumann algebra $\mfA'$. Theorem~\ref{thm:C*algebras-group}(2b) implies a lower bound condition, since it shows that any solution $\rho$ must be such that $\rho(G)$ is large enough to generate the von Neumann algebra $\mfA'$. At the same time, it gives a condition under which this occurs: the weak operator density of $\rho(G)$ in the group of unitary elements in $\mfA'$.
\end{remark}

From Remark~\ref{rmk:C*algebras-group} we conclude that the topological group $\U(\mfA')$ considered in Theorem~\ref{thm:C*algebras-group} is naturally associated to any $C^*$-subalgebra $\mfA$ of $\cB(\cH)$. This is so from the viewpoint of describing the elements of $\mfA$ as intertwining operators. In the next result we state such association as an assignment to stress its relevance.

\begin{corollary}\label{cor:AtoU(A')}
	For a Hilbert space $\cH$, the assignment given by
	\[
		\mfA \longmapsto \U(\mfA'),
	\]
	maps the class of $C^*$-subalgebras of $\cB(\cH)$ to the class of closed topological subgroups of $\U(\cH)$ and satisfies the following properties.
	\begin{enumerate}
		\item For every $C^*$-subalgebra $\mfA \subset \cB(\cH)$, the topological group $\U(\mfA')$ is the largest subgroup of $\U(\cH)$ such that $\End_{\U(\mfA')}(\cH) = \overline{\mfA}$.
		\item The assignment restricted to the subclass of von Neumann algebras acting on $\cH$ is injective. More generally, $\U(\mfA_1') = \U(\mfA_2')$ if and only if $\mfA_1$ and $\mfA_2$ generate the same von Neumann algebra.
	\end{enumerate}
\end{corollary}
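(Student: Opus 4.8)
The plan is to prove the two assertions of Corollary~\ref{cor:AtoU(A')} directly from Theorem~\ref{thm:C*algebras-group}, since most of the work has already been done there. First I would verify that the assignment is well defined: for any $C^*$-subalgebra $\mfA \subset \cB(\cH)$, the set $\mfA'$ is a von Neumann algebra (recalled in the conventions), so by Proposition~\ref{prop:U(R)-topgroup-vN} the group $\U(\mfA')$ is a closed topological subgroup of $\U(\cH)$. This places the assignment into the target class as claimed.

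For property (1), the equality $\End_{\U(\mfA')}(\cH) = \overline{\mfA}$ is exactly Theorem~\ref{thm:C*algebras-group}(1). To see that $\U(\mfA')$ is the \emph{largest} such subgroup, I would invoke Theorem~\ref{thm:C*algebras-group}(2a): if $G \subset \U(\cH)$ is any (closed) subgroup with $\End_G(\cH) = \overline{\mfA}$, then in particular $\mfA \subset \End_G(\cH)$, so applying part (2a) with $\rho$ the inclusion representation gives $G \subset \U(\mfA')$. Thus every competing subgroup is contained in $\U(\mfA')$, which is the desired maximality.

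For property (2), the key observation is that $\U(\mfA')$ determines and is determined by $\mfA'$, and hence by the von Neumann algebra $\overline{\mfA} = \mfA''$ generated by $\mfA$. In one direction, if $\mfA_1$ and $\mfA_2$ generate the same von Neumann algebra, then $\mfA_1'' = \mfA_2''$, so taking commutants $\mfA_1' = \mfA_2'$, whence $\U(\mfA_1') = \U(\mfA_2')$. Conversely, suppose $\U(\mfA_1') = \U(\mfA_2')$. Since $\mfA_1'$ and $\mfA_2'$ are von Neumann algebras, each is linearly generated (indeed spanned, using the four-unitaries fact recalled in the conventions) by its own unitary group; as these unitary groups coincide, the linear spans coincide, giving $\mfA_1' = \mfA_2'$. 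Taking commutants once more yields $\mfA_1'' = \mfA_2''$, i.e.~$\mfA_1$ and $\mfA_2$ generate the same von Neumann algebra. The injectivity on the subclass of von Neumann algebras is then immediate: if $\mfA_1, \mfA_2$ are von Neumann algebras with $\U(\mfA_1') = \U(\mfA_2')$, the equivalence just proved gives $\mfA_1 = \mfA_1'' = \mfA_2'' = \mfA_2$.

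I do not anticipate a genuine obstacle here, as the statement is essentially a bookkeeping consequence of the Double Commutant Theorem together with Theorem~\ref{thm:C*algebras-group}. The one point that requires a little care is the step $\U(\mfA_1') = \U(\mfA_2') \implies \mfA_1' = \mfA_2'$: one must be sure to use that a von Neumann algebra is the \emph{closed linear span} of its unitaries (equivalently, that each element is a linear combination of unitaries in it), so that equality of unitary groups forces equality of the algebras, rather than merely of some dense subspace. Beyond verifying that this spanning statement applies to the von Neumann algebras $\mfA_i'$, the argument is routine.
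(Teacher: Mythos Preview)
Your proposal is correct and follows essentially the same approach as the paper: both derive (1) directly from Theorem~\ref{thm:C*algebras-group} (with the maximality coming from part (2a)), and both obtain (2) by noting that $\U(\mfA_1')=\U(\mfA_2')\iff \mfA_1'=\mfA_2'\iff \overline{\mfA}_1=\overline{\mfA}_2$ via the span-by-unitaries fact and the Double Commutant Theorem. Your write-up simply makes explicit a few steps the paper leaves implicit.
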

\begin{proof}
	The first claim is an immediate consequence of Theorem~\ref{thm:C*algebras-group}. On the other hand, for a given pair of $C^*$-subalgebras $\mfA_1$ and $\mfA_2$ of $\cB(\cH)$ we have $\U(\mfA_1') = \U(\mfA_2')$ if and only if $\mfA_1' = \mfA_2'$, which is equivalent to $\overline{\mfA}_1 = \overline{\mfA}_2$. This proves the second claim.
\end{proof}

\begin{remark}\label{cor:AtoU(A')-counterexamples}
	It is clear that the assignment from Corollary~\ref{cor:AtoU(A')} is not injective in general since, for $\cH$ infinite-dimensional, there are plenty of $C^*$-algebras with the same weak-operator closure. On the other hand, if $\mfR \subset \cB(\cH)$ is a von Neumann algebra, then we have
	\[
		\U(\mfR') \supset \U(\cB(\cH)') = \T I.
	\]
	Hence, for any discrete subgroup $G \subset \T$ the group $G I$ is not of the form $\U(\mfR')$ for any von Neumann algebra $\mfR$. It follows that the assignment is not surjective either.
\end{remark}
 
\begin{remark}\label{rmk:subgroups-of-U(A')}
	One of the main problems that we consider in this work is the study of the relationship between $C^*$-algebras of $\cB(\cH)$ and unitary representations on the Hilbert space $\cH$.	Theorem~\ref{thm:C*algebras-group} and Corollary~\ref{cor:AtoU(A')} show that, for a given $C^*$-algebra $\mfA$, the most important group to consider is $\U(\mfA')$, since this is where any other group from which $\mfA$ can be built is found. In the rest of this work we will make use of this viewpoint together with the theory of operator algebra decompositions with respect to direct integrals.
\end{remark}

\section{Decompositions of spaces and algebras}
\label{sec:Decompositions}
\subsection{Direct integrals of Hilbert spaces}
We will use the notion of direct integrals of Hilbert spaces as defined in \cite{KRvolII} (see also \cite{DixmierCAlgebras,DixmierVonNeumann,TakesakiI}), which we recall in the following definition. As it is well known, the measure theoretic techniques involved requires to consider only separable Hilbert spaces. We will be careful to specify this condition when needed. On the other hand, such condition will allow us to improve Theorem~\ref{thm:C*algebras-group} so that Polish groups can be obtained.

\begin{definition}\label{def:direct-integral}
	Let $\cH$ be a separable Hilbert space. Assume that we are given a locally compact Polish space $X$ with a Radon measure $\mu$ on it, and a family $(\cH_p)_{p \in X}$ of separable Hilbert spaces. We say that $\cH$ is the direct integral of $(\cH_p)_{p \in X}$ over $(X,\mu)$ and write
	\begin{equation} \label{eq:direct-integral}
		\cH = \int^\oplus_X \cH_p \dif \mu(p),
	\end{equation}
	if there is a map that assigns to each $x \in \cH$ a function $X \longrightarrow \bigsqcup_{p \in X} \cH_p$ (which will be denoted with the same symbol) that satisfies $x(p) \in \cH_p$, for all $p \in X$, so that the following properties hold.
	\begin{enumerate}
		\item For every $x, y \in \cH$ the function $p \mapsto \langle x(p), y(p) \rangle$ belongs to $L^1(X,\mu)$ and it satisfies
		\[
		\langle x, y \rangle 
		= \int_X \langle x(p), y(p) \rangle \dif \mu(p).
		\]
		\item If for some choice of elements $(x_p)_{p \in X}$ with $x_p \in \cH_p$, for all $p \in X$, we have that $p \mapsto  \langle x_p , y(p) \rangle$ belongs to $L^1(X,\mu)$ for every $y \in \cH$, then there exists $x \in \cH$ such that $x_p = x(p)$ for almost every $p \in X$.
	\end{enumerate}
\end{definition}

\begin{remark}\label{rmk:direct-integral}
	We will follow the aforementioned references for the well known properties of direct integrals of Hilbert spaces. For simplicity, in the rest of this work when we speak of a direct integral of $(\cH_p)_{p \in X}$ over $(X,\mu)$ we will assume that the properties stated in Definition~\ref{def:direct-integral} hold. The same convention will apply when we write equation~\eqref{eq:direct-integral}. In particular, we will always assume that any direct integral is taken over a locally compact Polish space and that its measure is Radon. The existence theorems that we will use in this work ensure that this is possible (see \cite{DixmierVonNeumann,KRvolII}).
\end{remark}

Let us consider a separable Hilbert $\cH$ space which is the direct integral of $(\cH_p)_{p \in X}$ over $(X,\mu)$. It is well known that this yields two important types of operators. Let us consider $T \in \cB(\cH)$, a bounded operator on $\cH$. We will say that $T$ is \textit{decomposable} if there is a family $(T(p))_{p \in X}$ with $T(p) \in \cB(\cH_p)$, for every $p \in X$, such that for every $x \in \cH$ we have $(Tx)(p) = T(p)x(p)$, for almost every $p \in X$. In this case, we will write
\[
T = \int^\oplus_X T(p) \dif \mu(p),
\]
and we will also say that the family $(T(p))_{p \in X}$ of operators or the assignment $p \mapsto T(p)$, where $p \in X$, is the decomposition of $T$. Such operator $T$ is called \textit{diagonalizable} if we further have $T(p) = f(p) I_p$ for $I_p$ the identity operator on $\cH_p$, for almost every $p \in X$, where $f : X \rightarrow \C$ is a function. It is well known (see \cite[Section~14.1]{KRvolII}) that in the latter case $f$ is measurable essentially bounded. Conversely, every function $f \in L^\infty(X,\mu)$ defines a diagonalizable operator that is denoted by $M_f$. 

Let $\cH$ be a separable Hilbert space which is the direct integral of $(\cH_p)_{p \in X}$ over $(X,\mu)$. If $\mfR$ is the set of decomposable operators for this direct integral, then $\mfR$ is a von Neumann algebra acting on $\cH$. Furthermore, its commutant $\mfR'$ is precisely the space of diagonalizable operators and is thus a von Neumann algebra as well. We refer to \cite[Section~14.1]{KRvolII} for the corresponding proofs. It is a fundamental result that these facts have a sort of converse: any Abelian von Neumann algebra yields a direct integral decomposition whose diagonalizable operators are precisely those given by the Abelian algebra. The next result is an extension of this last fact and it follows easily from \cite[Theorem~14.2.1]{KRvolII} (see also \cite{DixmierVonNeumann}).

\begin{proposition}\label{prop:Abelian-direct-integral}
	Let $\cH$ be a separable Hilbert space and $\mfA$ be an Abelian $C^*$-subalgebra of $\cB(\cH)$. Then, there exist a measure space $(X,\mu)$, where $X$ is a locally compact Polish space and $\mu$ is a Radon measure in $X$, and a family $(\cH_p)_{p \in X}$ of Hilbert spaces such that the following properties are satisfied.
	\begin{enumerate}
		\item $\cH$ is the direct integral of $(\cH_p)_{p \in X}$ over $(X,\mu)$.
		\item $\overline{\mfA}$ and $\mfA'$ are the von Neumann algebras of diagonalizable and decomposable operators, respectively.
	\end{enumerate}
\end{proposition}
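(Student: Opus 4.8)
The plan is to reduce the statement to the classical correspondence between Abelian von Neumann algebras and direct integral decompositions, namely \cite[Theorem~14.2.1]{KRvolII}, by passing from the $C^*$-algebra $\mfA$ to the von Neumann algebra $\overline{\mfA}$ that it generates. The only genuinely new content beyond the cited theorem is the bookkeeping of commutants that converts a conclusion about $\overline{\mfA}$ into the stated conclusion about both $\overline{\mfA}$ and $\mfA'$.

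First I would record that $\overline{\mfA}$ is an Abelian von Neumann algebra. Since $\mfA$ is Abelian we have $\mfA \subset \mfA'$; applying the commutant and using $\mfA''' = \mfA'$ gives $\overline{\mfA} = \mfA'' \subset \mfA' = (\mfA'')'$, so $\mfA''$ is contained in its own commutant and is therefore Abelian. It is a von Neumann algebra by the Double Commutant Theorem. Next I would apply \cite[Theorem~14.2.1]{KRvolII} to the Abelian von Neumann algebra $\overline{\mfA}$ acting on the separable Hilbert space $\cH$. This produces a measure space $(X,\mu)$ together with a family $(\cH_p)_{p \in X}$ of Hilbert spaces realizing $\cH$ as the direct integral $\int^\oplus_X \cH_p \dif \mu(p)$, in such a way that $\overline{\mfA}$ is exactly the algebra of diagonalizable operators for this decomposition. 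Following the conventions in Remark~\ref{rmk:direct-integral}, the existence theorems in \cite{DixmierVonNeumann,KRvolII} allow us to take $X$ locally compact Polish and $\mu$ Radon, which secures property~(1) together with the assertion in~(2) that $\overline{\mfA}$ is the diagonalizable algebra.

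Finally I would identify the decomposable operators. As recalled before the statement (citing \cite[Section~14.1]{KRvolII}), for any direct integral the decomposable operators form a von Neumann algebra whose commutant is precisely the algebra of diagonalizable operators. Hence the algebra of decomposable operators equals the commutant of the diagonalizable algebra, which here is $(\overline{\mfA})' = \mfA''' = \mfA'$. This yields the remaining assertion in~(2).

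I do not expect a serious obstacle: the whole argument is the closure step passing from $\mfA$ to $\overline{\mfA}$, followed by the commutant identity $(\overline{\mfA})' = \mfA'$. The only point requiring care is that the algebra of diagonalizable operators, being weak-operator closed, can match the Abelian algebra only through its von Neumann closure $\overline{\mfA}$ and not through $\mfA$ itself, which is exactly how the statement is phrased; no multiplicity-theoretic input beyond \cite[Theorem~14.2.1]{KRvolII} is needed.
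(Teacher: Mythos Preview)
Your proposal is correct and matches the paper's approach exactly: the paper does not give a detailed proof but simply states that the result ``follows easily from \cite[Theorem~14.2.1]{KRvolII},'' which is precisely the reduction you carry out by passing to $\overline{\mfA}$ and then reading off $\mfA' = (\overline{\mfA})'$ as the decomposable operators.
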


\subsection{Decompositions of operator algebras}
It is a fundamental fact that the description of a separable Hilbert space $\cH$ as a direct integral induces a corresponding decomposition on suitable operator algebras acting on $\cH$. This holds for both $C^*$-subalgebras and von Neumann subalgebras of $\cB(\cH)$ when either consist of decomposable operators or (equivalently) when its elements commute with the diagonalizable operators. We make this remark precise by recalling some facts from~\cite[Section~14.1]{KRvolII}. Further details and proofs can be found in this reference.

\begin{definition}\label{def:C*-decomposable}
	Let $\cH$ be a separable Hilbert space which is the direct integral of $(\cH_p)_{p \in X}$ over $(X,\mu)$ and $\mfA$ be a $C^*$-subalgebra of the von Neumann algebra of decomposable operators. We will say that $\mfA$ is a decomposable $C^*$-algebra if there exist a family $(\varphi_p)_{p \in X}$ of $C^*$-algebra representations $\varphi_p : \mfA \rightarrow \cB(\cH_p)$, where $p \in X$, such that for every $A \in \mfA$ we have $A(p) = \varphi_p(A)$, for almost every $p \in X$. In this case, the family $(\varphi_p)_{p \in X}$ is called a decomposition of the $C^*$-algebra $\mfA$.
\end{definition}

By its nature, a decomposition of a $C^*$-algebra may be considered only over a conull subset. On the other hand, these always exist in the norm-separable case as we now state (see~\cite[Theorem~14.1.13]{KRvolII}).

\begin{proposition}\label{prop:C*normseparable-decomposable}
	If $\cH$ is a separable Hilbert space which is the direct integral of $(\cH_p)_{p \in X}$ over $(X,\mu)$ and $\mfA$ is a norm-separable $C^*$-subalgebra of the von Neumann algebra of decomposable operators, then $\mfA$ is a decomposable $C^*$-algebra.
\end{proposition}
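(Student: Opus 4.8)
The plan is to exploit the norm-separability of $\mfA$ in order to reduce the construction of the representations $\varphi_p$ to countably many almost-everywhere conditions. First I would fix a countable $(\mathbb{Q}+i\mathbb{Q})$-$*$-subalgebra $\mfA_0 = \{B_n\}_{n \in \N}$ that is norm-dense in $\mfA$; such a set exists precisely because $\mfA$ is norm-separable. For each $n$ I fix once and for all a decomposition $B_n = \int^\oplus_X B_n(p) \dif \mu(p)$ with $B_n(p) \in \cB(\cH_p)$.

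Next I would invoke the standard algebraic and metric behaviour of decompositions (see \cite[Section~14.1]{KRvolII}): for decomposable operators $S,T$ and a scalar $\lambda$ one has, almost everywhere, $(S+T)(p) = S(p)+T(p)$, $(ST)(p) = S(p)T(p)$, $(\lambda S)(p) = \lambda S(p)$ and $(S^*)(p) = S(p)^*$, while $\|S\| = \operatorname{ess\,sup}_p \|S(p)\|$, so that $\|S(p)\| \le \|S\|$ off a $\mu$-null set. Since $\mfA_0$ is countable and closed under the algebraic operations of a $(\mathbb{Q}+i\mathbb{Q})$-$*$-algebra, only countably many such identities occur among its elements, and each fails only on a null set. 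Intersecting all of these null sets, together with those on which $\|B_n(p)\| \le \|B_n\|$ fails, yields a conull set $X_0 \subset X$ such that for every $p \in X_0$ the assignment $B_n \mapsto B_n(p)$ respects every operation of $\mfA_0$ and satisfies $\|B_n(p)\| \le \|B_n\|$.

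With $X_0$ fixed, for each $p \in X_0$ the map $B_n \mapsto B_n(p)$ is a contractive $(\mathbb{Q}+i\mathbb{Q})$-$*$-homomorphism $\mfA_0 \rightarrow \cB(\cH_p)$. Being contractive on a norm-dense subset, it extends uniquely to a continuous map $\varphi_p : \mfA \rightarrow \cB(\cH_p)$, which is automatically $\C$-linear (as $\mathbb{Q}+i\mathbb{Q}$ is dense in $\C$) and a $*$-homomorphism, i.e.~a $C^*$-algebra representation; on the null complement of $X_0$ I set $\varphi_p = 0$, which is harmless since the decomposition is only required almost everywhere. It then remains to verify $A(p) = \varphi_p(A)$ almost everywhere for every $A \in \mfA$, not merely for $A \in \mfA_0$. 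Choosing $B_{n_k} \rightarrow A$ in norm, the norm formula gives $\operatorname{ess\,sup}_p \|B_{n_k}(p) - A(p)\| = \|B_{n_k} - A\| \rightarrow 0$, so after discarding a further countable union of null sets we obtain $B_{n_k}(p) \rightarrow A(p)$ for almost every $p$; continuity of $\varphi_p$ gives $\varphi_p(B_{n_k}) = B_{n_k}(p) \rightarrow \varphi_p(A)$, whence $A(p) = \varphi_p(A)$ almost everywhere. This is exactly the assertion that $(\varphi_p)_{p \in X}$ is a decomposition of $\mfA$.

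The step I expect to be the main obstacle is the bookkeeping of null sets: one must guarantee that after intersecting countably many conull sets the surviving fibres simultaneously witness every algebraic identity of $\mfA_0$ and the contractivity bound, and that the final passage from $\mfA_0$ to all of $\mfA$ costs only one additional countable union of null sets. Norm-separability is precisely what keeps every one of these collections countable; without it the relevant intersection of conull sets could be empty. The quantitative identity $\|S\| = \operatorname{ess\,sup}_p \|S(p)\|$ is the workhorse throughout, as it supplies both the fibrewise contractivity needed to extend each $\varphi_p$ and the uniform control needed to pass to norm limits.
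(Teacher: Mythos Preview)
Your argument is correct and is essentially the standard proof of this fact; however, the paper does not actually supply its own proof of this proposition. It is stated as a background result and immediately attributed to \cite[Theorem~14.1.13]{KRvolII}. What you have written is, in outline, precisely the argument given in that reference: pick a countable norm-dense $*$-subalgebra over $\mathbb{Q}+i\mathbb{Q}$, use the almost-everywhere compatibility of the fibrewise decompositions with the algebraic operations and the essential-supremum norm formula to discard a single countable union of null sets, extend each resulting contractive $*$-map by continuity, and then verify the identity $A(p)=\varphi_p(A)$ for general $A$ by approximation. Your handling of the null-set bookkeeping is accurate, including the observation that the final null set in step five is allowed to depend on $A$, which is exactly what Definition~\ref{def:C*-decomposable} permits. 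So there is nothing to correct; you have simply reproduced the cited proof rather than compared against one in the paper, since none is present.
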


\begin{remark}\label{rmk:decompositions-unit}
	Following \cite{KRvolI,KRvolII} we will assume in this work that the homomorphisms, and so the representations, of $C^*$-algebras map the unit onto the unit. With this restriction in place, Definition~\ref{def:C*-decomposable} ought to be stated by requiring $\varphi_p$ to exist only for $p \in X \setminus N$, where $N$ is some null subset of $X$. On the other hand, the conditions that these representations satisfy have to hold almost everywhere. In particular, a decomposition of a $C^*$-algebra continues to be so even after replacing or removing its elements on a null subset. Hence, whenever Definition~\ref{def:C*-decomposable} is applied we will assume that the representations $\varphi_p$ are only given in some conull subset of $X$, even though they are indexed for all $p \in X$. This convention is introduced to simplify our statements. Nevertheless, we will explicitly indicate its use whenever it is important to avoid confusion. In particular, Proposition~\ref{prop:C*normseparable-decomposable} will be assumed to hold under such convention.
\end{remark}

We now consider decompositions of von Neumann algebras. As it is well known, it turns out that such decompositions are dependent upon corresponding ones for $C^*$-algebras. 

\begin{definition}\label{def:vonNeumann-decomposition}
	Let $\cH$ be a separable Hilbert space which is the direct integral of $(\cH_p)_{p \in X}$ over $(X,\mu)$. A von Neumann algebra $\mfR$ acting on $\cH$ is called decomposable, with decomposition $(\mfR_p)_{p \in X}$, if there exist a strong-operator dense norm-separable $C^*$-subalgebra $\mfA \subset \mfR$  for which there is a decomposition $(\varphi_p)_{p \in X}$ of $\mfA$ such that the $C^*$-algebra $\varphi_p(\mfA)$ is strong-operator dense in $\mfR_p$, for almost every $p \in X$. 
\end{definition}

As in the case discussed in Remark~\ref{rmk:decompositions-unit}, decompositions of von Neumann algebras may be specified almost everywhere. It is well known (see \cite[Theorem~14.1.16]{KRvolII}) that they always exist when the elements of the algebra are decomposable, as we now state.

\begin{proposition}\label{prop:vonNeumann-decomposition}
	Let $\cH$ be a separable Hilbert space which is the direct integral of $(\cH_p)_{p \in X}$ over $(X,\mu)$ and $\mfR$ be a von Neumann subalgebra of the algebra of decomposable operators. Then, $\mfR$ is decomposable and admits an almost everywhere unique decomposition $(\mfR_p)_{p \in X}$.
\end{proposition}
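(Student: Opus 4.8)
The plan is to reduce the statement to Proposition~\ref{prop:C*normseparable-decomposable} by first producing a suitable dense $C^*$-subalgebra of $\mfR$, and then to invoke \cite[Theorem~14.1.16]{KRvolII} for the almost-everywhere uniqueness.

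First I would construct a strong-operator dense norm-separable $C^*$-subalgebra $\mfA \subset \mfR$. Since $\cH$ is separable, the strong-operator topology on bounded subsets of $\cB(\cH)$ is metrizable and $\mfR$ is strong-operator separable; hence I can choose a countable strong-operator dense subset $D \subset \mfR$ and let $\mfA$ be the $C^*$-subalgebra generated by $D \cup \{I\}$. Then $\mfA$ is norm-separable and contained in $\mfR$, and since $\mfA \supseteq D$ while $\mfR$ is strong-operator closed, $\mfA$ is strong-operator dense in $\mfR$. Crucially, because every element of $\mfR$ is decomposable and the decomposable operators form a von Neumann algebra, all elements of $\mfA$ are decomposable. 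Proposition~\ref{prop:C*normseparable-decomposable} then applies and yields a decomposition $(\varphi_p)_{p \in X}$ of $\mfA$. Setting $\mfR_p$ to be the strong-operator closure of $\varphi_p(\mfA)$ in $\cB(\cH_p)$, one obtains a family of von Neumann algebras which, by Definition~\ref{def:vonNeumann-decomposition}, exhibits $\mfR$ as decomposable with decomposition $(\mfR_p)_{p \in X}$. This settles existence.

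The main obstacle is the almost-everywhere uniqueness, since the definition of a decomposition depends on the auxiliary choice of the dense subalgebra and of the representations $\varphi_p$. To handle this I would compare two decompositions, arising from subalgebras $\mfA_1, \mfA_2$ with representations $(\varphi^1_p)_{p \in X}$ and $(\varphi^2_p)_{p \in X}$, by passing to the norm-separable strong-operator dense $C^*$-algebra $\mfB$ generated by $\mfA_1 \cup \mfA_2$, together with its decomposition $(\psi_p)_{p \in X}$ furnished by Proposition~\ref{prop:C*normseparable-decomposable}. The almost-everywhere uniqueness of the decomposition of a single decomposable operator (the components $T(p)$ are determined by $T$ up to a null set) shows, after restricting to countable norm-dense subsets of $\mfA_1$ and $\mfA_2$ and using continuity of the $C^*$-homomorphisms, that $\psi_p$ agrees with $\varphi^i_p$ on $\mfA_i$ for almost every $p$, so that $\overline{\varphi^i_p(\mfA_i)} \subseteq \overline{\psi_p(\mfB)}$ almost everywhere.

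The genuinely delicate point, and the one for which I would rely on \cite[Theorem~14.1.16]{KRvolII}, is the reverse inclusion: that the strong-operator density of $\mfA_i$ in $\mfR$ is inherited almost everywhere by the fibers, i.e.\ that $\varphi^i_p(\mfA_i)$ is strong-operator dense in $\overline{\psi_p(\mfB)}$ for almost every $p$. Granting this fiberwise density, the strong-operator closures of $\varphi^1_p(\mfA_1)$, of $\psi_p(\mfB)$, and of $\varphi^2_p(\mfA_2)$ all coincide almost everywhere, giving $\mfR_p = \overline{\psi_p(\mfB)}$ for both choices and hence the asserted almost-everywhere uniqueness. I expect all steps except this last fiberwise-density statement to be routine bookkeeping with countable dense sets and null-set manipulations; the density statement is where the substance of \cite[Theorem~14.1.16]{KRvolII} is used.
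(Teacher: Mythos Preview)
Your proposal is correct and follows essentially the same route as the paper, which does not give a proof but simply cites \cite[Theorem~14.1.16]{KRvolII} and then, in Remark~\ref{rmk:vonNeumann-decomposition}, lists the same ingredients you use: the existence of a strong-operator dense norm-separable $C^*$-subalgebra (\cite[Lemma~14.1.17]{KRvolII}), its decomposability (Proposition~\ref{prop:C*normseparable-decomposable}), and the almost-everywhere independence of the resulting fiber algebras from the choices made (\cite[Lemma~14.1.15]{KRvolII}). Your write-up in fact gives more detail than the paper does, and correctly isolates the fiberwise density as the only non-routine step.
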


\begin{remark}\label{rmk:vonNeumann-decomposition}
	We explain some of the details relevant to the previous definition and proposition. In the first place, it is well known (see~\cite[Lemma~14.1.17]{KRvolII}) that every von Neumann algebra acting on a separable Hilbert space contains a strong-operator dense norm-separable $C^*$-subalgebra. Next, as we stated in Proposition~\ref{prop:C*normseparable-decomposable}, every norm-separable $C^*$-algebra is decomposable. Finally, the strong-operator closure of $\varphi_p(\mfA)$ is independent of the choices involved for almost every $p \in X$ (see~\cite[Lemma~14.1.15]{KRvolII}). 
\end{remark}

With the notation of Definition~\ref{def:vonNeumann-decomposition}, if $\mfC$ is the center of $\mfR$, then we trivially have $\mfR \subset \mfC'$ and so $\mfR$ consists of decomposable operators for the direct integral decomposition of $\cH$ given by the Abelian von Neumann algebra $\mfC$. Hence, Propositions~\ref{prop:Abelian-direct-integral} and \ref{prop:vonNeumann-decomposition} allow us to formulate the following definition.

\begin{definition}\label{def:vonNeumann-centraldecomposition}
	Let $\cH$ be a separable Hilbert space and $\mfR$ be a von Neumann algebra acting on $\cH$ with center $\mfC$. The central direct integral decomposition of $\cH$ associated to $\mfR$ and the central decomposition of $\mfR$ are those given by the Abelian von Neumann algebra $\mfC$, respectively.
\end{definition}

\begin{remark}\label{rmk:support-Radon}
	Let $X$ be a locally compact Polish space carrying a Radon measure $\mu$ and let us denote by $X_0$ the support of $\mu$. In other words, $X \setminus X_0$ is the largest null open subset of $X$. In particular, $X_0$ is closed in $X$ and so it is a locally compact Polish space as well. We also conclude that for every measurable subset $E$ of $X$ we have $\mu(E) = \mu(E \cap X_0)$. It follows that the restriction of $\mu$ to $X_0$ is a Radon measure which is positive on non-empty open subsets of $X_0$. It is clear that all the measure theory spaces attached to $X$ and $X_0$ are naturally isomorphic by the restriction map $f \mapsto f|_{X_0}$. In other words, for measure theoretic purposes, we can replace any locally compact Polish space carrying a Radon measure with a closed subspace to assume that $\mu$ is positive on non-empty open subsets.
	
	We wish to take this elementary fact one step further in a way that will be useful latter on. More precisely, we observe that all the constructions and definitions considered in this section are given by conditions that need to be satisfied almost everywhere. In particular, these are not affected by removing null sets. Hence, we can assume in all the definitions and results stated so far that the Radon measure $\mu$ on the locally compact Polish space $X$ is positive on non-empty subsets. This is achieved as in the previous paragraph by replacing $X$ with the support $X_0$ of $\mu$ which only discards the null open set $X \setminus X_0$. We will follow this assumption without further mention in the rest of this work while being careful to make it explicit whenever any confusion may arise.
\end{remark}

We state the following simple result whose proof we provide for the sake of completeness. It yields an example where the assumption established in Remark~\ref{rmk:support-Radon} is useful.

\begin{lemma}\label{lem:Cb-into-BL2}
	Let $X$ be a locally compact Polish space carrying a Radon measure $\mu$ which is positive on non-empty open subsets of $X$. Then, the natural mappings $C_b(X) \rightarrow L^\infty(X,\mu) \rightarrow \cB(L^2(X,\mu))$ given by $f \mapsto f \mapsto M_f$ are $C^*$-algebra isomorphisms onto their corresponding images.
\end{lemma}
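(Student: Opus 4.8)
The plan is to prove two separate $C^*$-algebra isomorphism statements: first that $C_b(X) \to L^\infty(X,\mu)$ given by $f \mapsto f$ is an isometric $*$-isomorphism onto its image, and second that $L^\infty(X,\mu) \to \cB(L^2(X,\mu))$ given by $f \mapsto M_f$ is an isometric $*$-isomorphism onto its image. Since both maps are clearly $*$-homomorphisms, the crux of the matter is that each is \emph{isometric}; a $*$-homomorphism between $C^*$-algebras is automatically norm-decreasing, and one that is additionally isometric is automatically injective with closed range, hence a $*$-isomorphism onto its image. So the whole proof reduces to verifying that neither map decreases norms, i.e.\ that $\|f\|_\infty^{L^\infty} = \|f\|_\infty^{C_b}$ for $f \in C_b(X)$ and that $\|M_f\| = \|f\|_\infty$ for $f \in L^\infty$.

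First I would handle the map $L^\infty(X,\mu) \to \cB(L^2(X,\mu))$, $f \mapsto M_f$. This is the classical computation that a multiplication operator on $L^2$ has operator norm equal to the essential supremum of the multiplier; it holds for any $\sigma$-finite (indeed any) measure space and requires no positivity hypothesis on $\mu$. The inequality $\|M_f\| \le \|f\|_\infty$ is immediate from $\|fg\|_2 \le \|f\|_\infty \|g\|_2$. For the reverse, given $\varepsilon > 0$ the set $E = \{p : |f(p)| > \|f\|_\infty - \varepsilon\}$ has positive measure; restricting to a subset of finite positive measure and testing $M_f$ against its normalized indicator function shows $\|M_f\| \ge \|f\|_\infty - \varepsilon$. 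Thus $f \mapsto M_f$ is isometric.

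The more delicate step, and where the hypothesis that $\mu$ is positive on non-empty open subsets of $X$ (exactly the assumption arranged in Remark~\ref{rmk:support-Radon}) is essential, is the first map $C_b(X) \to L^\infty(X,\mu)$. Here I must show that for a bounded continuous $f$, its essential supremum with respect to $\mu$ equals its genuine supremum over $X$. The inequality $\|f\|_\infty^{L^\infty} \le \|f\|_\infty^{C_b}$ is trivial. For the reverse, suppose $|f(p_0)| = \|f\|_\infty^{C_b} - \delta$ fails to be exceeded on a conull set in the essential sense; more precisely, fix any $p_0 \in X$ and any $\varepsilon > 0$, and use continuity of $f$ to find an open neighborhood $V$ of $p_0$ on which $|f| > |f(p_0)| - \varepsilon$. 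Since $V$ is a non-empty open set, the standing hypothesis gives $\mu(V) > 0$, so the essential supremum is at least $|f(p_0)| - \varepsilon$. Taking the supremum over $p_0$ and letting $\varepsilon \to 0$ yields $\|f\|_\infty^{L^\infty} \ge \|f\|_\infty^{C_b}$, so the map is isometric (in particular injective, so the class of $f$ in $L^\infty$ determines $f$ pointwise). Without the positivity-on-open-sets assumption this fails, since $f$ could be supported away from the support of $\mu$; this is precisely the obstacle the remark was designed to remove, and it is the one genuinely nontrivial point of the argument.

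Finally I would assemble the two pieces: each map is an isometric $*$-homomorphism, hence a $C^*$-isomorphism onto its image, and composition records that $C_b(X)$ sits inside $\cB(L^2(X,\mu))$ as a $C^*$-subalgebra via $f \mapsto M_f$. I expect the norm computation for $M_f$ to be entirely routine, while the identification of the essential supremum with the supremum for continuous functions—the place where the Radon and open-positivity properties enter—is the step deserving the most care.
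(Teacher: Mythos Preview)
Your proof is correct and hits the same two points as the paper: the map $L^\infty(X,\mu)\to\cB(L^2(X,\mu))$ is handled as a standard fact needing no hypothesis on $\mu$, and the map $C_b(X)\to L^\infty(X,\mu)$ is where the positivity-on-open-sets assumption enters. The only difference is one of economy. You prove directly that the essential supremum of a continuous bounded $f$ equals its pointwise supremum, i.e.\ that the first map is isometric. The paper instead checks only \emph{injectivity}: if the class of $f$ in $L^\infty$ is zero then $X\setminus f^{-1}(0)$ is an open null set, hence empty by hypothesis, so $f=0$. This suffices because an injective $*$-homomorphism between $C^*$-algebras is automatically isometric. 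Your route is more self-contained (it does not invoke that $C^*$-fact), while the paper's is a line shorter; both are complete.
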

\begin{proof}
	The given map $L^\infty(X,\mu) \rightarrow \cB(L^2(X,\mu))$ is well known to be an isomorphism of $C^*$-algebras onto its image. This fact does not require the positivity condition on~$\mu$.
	
	The assignment $C_b(X) \rightarrow L^\infty(X,\mu)$ is considered as mapping a bounded continuous function $f$ onto its function class modulo null sets. Note that this map is clearly a homomorphism of $C^*$-algebras. Hence, it is enough to prove its injectivity. Let $f \in C_b(X)$ be such that its class in $L^\infty(X,\mu)$ is zero. Hence, $X \setminus f^{-1}(0)$ is a null open set and by the assumption on $\mu$ it is empty. This implies that $f^{-1}(0) = X$ and so $f$ is the function zero as an element of $C_b(X)$.
\end{proof}

\section{Operator $C^*$-algebras and Polish groups}
\label{sec:operator-alg-Polishgroups}
\subsection{Operator $C^*$-algebras as intertwining operators (Polish groups)}
Let $\cH$ be a Hilbert space and $\mfB$ be a $C^*$-subalgebra of $\cB(\cH)$. As noted before, the topological group $\U(\mfB)_\NN$ has a natural action on $\cH$ given by the inclusion homomorphism $\U(\mfB)_\NN \hookrightarrow \U(\cH)$, which is clearly continuous since the norm topology is finer than the strong-operator topology. Hence, we obtain a unitary representation of $\U(\mfB)_\NN$ on $\cH$. Furthermore, if $\mfB$ is norm-separable, then the topological group $\U(\mfB)_\NN$ is Polish. By using the unitary representation of the group $\U(\mfB)_\NN$ on $\cH$ we improve, in the next result, the conclusions from Theorem~\ref{thm:C*algebras-group} in a way that will be useful for direct integral decompositions. We recall from Remark~\ref{rmk:vonNeumann-decomposition} that a $C^*$-subalgebra $\mfB$ as considered below always exists.

\begin{proposition}\label{prop:C*algebra-group-normtopo}
	Let $\cH$ be a separable Hilbert space and $\mfA$ be a $C^*$-subalgebra of $\cB(\cH)$. If $\mfB$ is a strong-operator dense norm-separable $C^*$-subalgebra of $\mfA'$, then the following properties hold.
	\begin{enumerate}
		\item The $C^*$-algebra $\mfA$ is strong-operator dense in $\End_{\U(\mfB)_\NN}(\cH)$: i.e.~we have $\overline{\mfA} = \End_{\U(\mfB)_\NN}(\cH)$.
		\item The Polish group $\U(\mfB)_\NN$ is strong-operator dense in $\U(\mfA')$: i.e.~we have $\overline{\U(\mfB)_\NN} = \U(\mfA')$, where the closure is taken in $\U(\mfA')$ with respect to its (strong-operator) topology.
	\end{enumerate}
	Furthermore, the von Neumann algebras generated by $\mfA$ and $\mfB$ are the commutant of each other. More precisely, we have $\overline{\mfB} = \mfA'$ and $\overline{\mfA} = \mfB'$. In particular, $\overline{\mfA} \cap \overline{\mfB}$ is the center of both of these von Neumann algebras.
\end{proposition}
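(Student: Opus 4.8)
The plan is to reduce everything to the two ``furthermore'' identities $\overline{\mfB} = \mfA'$ and $\overline{\mfA} = \mfB'$, since the rest of the proposition follows formally from these together with results already in hand. The first identity is immediate: $\mfA'$ is a von Neumann algebra (being the commutant of a self-adjoint set), hence equals its own strong-operator closure, so the hypothesis that $\mfB$ is strong-operator dense in $\mfA'$ gives $\overline{\mfB} = \mfA'$ at once. Taking commutants and using $\cS' = \cS'''$ then yields $\mfB' = \overline{\mfB}' = (\mfA')' = \mfA'' = \overline{\mfA}$, which is the second identity.

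For part (2) I would apply Proposition~\ref{prop:KaplanskyUnitaryDensity} directly to the algebra $\mfB$: it asserts that $\U(\mfB)_\NN$ is strong-operator dense in $\U(\overline{\mfB})$, and since $\overline{\mfB} = \mfA'$ this reads $\overline{\U(\mfB)_\NN} = \U(\mfA')$, the closure being taken inside $\U(\mfA')$ with its strong-operator topology. I would also record that the strong-operator and weak-operator topologies coincide on unitary groups, so this is equally a weak-operator density statement; I reuse this for part (1).

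Part (1) then follows from Theorem~\ref{thm:C*algebras-group}. Setting $G = \U(\mfB)_\NN$ with $\rho$ the inclusion representation, so that $\rho(G)$ is the set of unitaries of $\mfB$ viewed inside $\U(\cH)$, part (2) shows $\rho(G)$ is weak-operator dense in $\U(\mfA')$; by Theorem~\ref{thm:C*algebras-group}(2b) the von Neumann algebra generated by $\rho(G)$ is therefore $\mfA'$, whence $\overline{\mfA} = \End_G(\cH) = \End_{\U(\mfB)_\NN}(\cH)$. One could instead argue directly that the unitaries of $\mfB$ linearly span $\mfB$, so that their commutant is $\mfB' = \overline{\mfA}$.

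Finally, the center assertion is formal. The center of $\overline{\mfA}$ is $\overline{\mfA} \cap \overline{\mfA}'$, and $\overline{\mfA}' = (\mfA'')' = \mfA' = \overline{\mfB}$, so this center equals $\overline{\mfA} \cap \overline{\mfB}$; symmetrically the center of $\overline{\mfB} = \mfA'$ is $\mfA' \cap \mfA'' = \overline{\mfB} \cap \overline{\mfA}$, the same algebra. I do not anticipate a genuine obstacle: the substance is simply recognizing that the hypotheses are tailored to feed Proposition~\ref{prop:KaplanskyUnitaryDensity} and Theorem~\ref{thm:C*algebras-group}. The only points needing care are the bookkeeping of where each closure is taken and the repeated use of the fact that the operator topologies agree on unitary groups.
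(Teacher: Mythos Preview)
Your proposal is correct and follows essentially the same approach as the paper: both derive (2) from Proposition~\ref{prop:KaplanskyUnitaryDensity}, deduce (1) from Theorem~\ref{thm:C*algebras-group} via the density just established, and handle the ``furthermore'' claims by the obvious commutant manipulations. The only cosmetic differences are the order of presentation and that the paper invokes part (1) of Theorem~\ref{thm:C*algebras-group} (equality of $\End$-algebras under density) whereas you invoke part (2b); your alternative argument via the linear span of unitaries is also fine and in fact closest in spirit to the paper's one-line computation $\End_{\U(\mfB)_\NN}(\cH) = \End_{\U(\mfA')}(\cH) = \overline{\mfA}$.
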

\begin{proof}
	Let $\mfB$ be a strong-operator dense $C^*$-subalgebra of the von Neumann algebra $\mfA'$. Hence, (2) follows from Proposition~\ref{prop:KaplanskyUnitaryDensity}. Such strong-operator density clearly implies the first identity in
	\[
		\End_{\U(\mfB)_\NN}(\cH) = \End_{\U(\mfA')}(\cH) = \overline{\mfA},
	\]
	and the second identity follows from Theorem~\ref{thm:C*algebras-group}(1). This proves (1).
	
	Finally, by assumption we have $\overline{\mfB} = \mfA'$, and so we also conclude that $\overline{\mfA} = \mfA'' = (\overline{\mfB})' = \mfB'$. Hence, the last claim is proved.
\end{proof}

\begin{remark}\label{rmk:generates-commutant}
	The assumption $\overline{\mfB} = \mfA'$ considered in Proposition~\ref{prop:C*algebra-group-normtopo} will turn out to be fundamental in what follows. This condition can be phrased by saying that the von Neumann algebra generated by $\mfB$ is the commutant of $\mfA$. We note that, as seen from the arguments used in the proof of Proposition~\ref{prop:C*algebra-group-normtopo}, the identities $\overline{\mfB} = \mfA'$ and $\overline{\mfA} = \mfB'$ are in fact equivalent for any pair of $C^*$-subalgebras $\mfA$ and $\mfB$ of $\cB(\cH)$, where $\cH$ is any Hilbert space. In other words, $\mfB$ generates the commutant of $\mfA$ if and only if $\mfA$ generates the commutant of $\mfB$. Furthermore, in this case, $\overline{\mfA} \cap \overline{\mfB}$ is the center of both von Neumann algebras generated by $\mfA$ and $\mfB$.
\end{remark}

We introduce the following definition that subsumes the relationship just considered between two $C^*$-algebras.

\begin{definition}\label{def:commutant-dual}
	Let $\cH$ be a Hilbert space and let $\mfA, \mfB$ be two $C^*$-algebras with representations $\varphi, \psi$ on $\cH$, respectively. The $C^*$-algebras will be called commutant dual on $\cH$ if they satisfy the equivalent conditions $\overline{\psi(\mfB)} = \varphi(\mfA)'$ and $\overline{\varphi(\mfA)} = \psi(\mfB)'$. When the representations are clear from the context we will simply say that $\mfA$ and $\mfB$ are commutant dual. In particular, this applies when both are $C^*$-subalgebras of $\cB(\cH)$.
\end{definition}

\begin{remark}\label{rmk:commutant-dual-norm-separable}
	If $\cH$ is a separable Hilbert space and $\mfA$ is a $C^*$-subalgebra of $\cB(\cH)$, then there always exists a norm-separable $C^*$-subalgebra $\mfB$ of $\cB(\cH)$ which is commutant dual of $\mfA$. In the notation of Definition~\ref{def:commutant-dual}, this is a rephrasing of what was already noted in Remark~\ref{rmk:vonNeumann-decomposition} to be a consequence of \cite[Lemma~14.1.17]{KRvolII}.
\end{remark}

For a given $C^*$-subalgebra $\mfA$ of $\cB(\cH)$, where $\cH$ is a separable Hilbert space, Proposition~\ref{prop:C*algebra-group-normtopo} allows us to realize $\mfA$ as a strong-operator dense subalgebra of the intertwining operators for the unitary representation given by the Polish group of unitary elements of a norm-separable $C^*$-algebra $\mfB$. As noted in Remark~\ref{rmk:vonNeumann-decomposition}, the latter sort of algebra is the main ingredient needed to decompose von Neumann algebras with respect to direct integrals. This is how, in the next result, we realize any $C^*$-algebra acting on a separable Hilbert space as a (strong-operator) dense subset of the intertwining operators of a unitary representation that can be decomposed. 

\begin{theorem}\label{thm:C*algebra-intdecomp-group}
	Let $\cH$ be a separable Hilbert space and let $\mfA, \mfB$ be commutant dual $C^*$-subalgebras of $\cB(\cH)$ with $\mfB$ norm-separable. Let us assume that the central direct integral decomposition of $\cH$ associated to $\overline{\mfA}$ is given by the direct integral of $(\cH_p)_{p \in X}$ over $(X,\mu)$. Then, the unitary representation of $\U(\mfB)_\NN$ on $\cH$ can be decomposed with respect to this direct integral. More precisely, there exist a family $(\rho_p)_{p \in X}$ of maps that satisfy the following properties.
	\begin{enumerate}
		\item For every $p \in X$, the map $\rho_p : \U(\mfB)_\NN \rightarrow \U(\cH_p)$ is a unitary representation.
		\item For every $U \in \U(\mfB)_\NN$ we have $U(p) = \rho_p(U)$, for almost every $p \in X$.
	\end{enumerate}
	Furthermore, we can choose $(\rho_p)_{p \in X}$ such that the central decomposition of $\overline{\mfA}$ is given by $\big(\End_{\U(\mfB)_\NN}(\cH_p)\big)_{p \in X}$, and also such that $\rho_p = \varphi_p|_{\U(\mfB)_\NN}$, for almost every $p \in X$, where $(\varphi_p)_{p \in X}$ is a decomposition of $\mfB$.
\end{theorem}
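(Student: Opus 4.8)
The plan is to exploit the fact that $\mfB$ is a norm-separable $C^*$-algebra consisting of decomposable operators, so that the decomposition machinery of Proposition~\ref{prop:C*normseparable-decomposable} applies directly. First I would observe that since $\mfA$ and $\mfB$ are commutant dual, we have $\overline{\mfB} = \mfA'$, and hence $\mfB \subset \mfA' = (\overline{\mfA})'$. Because the central direct integral decomposition of $\cH$ associated to $\overline{\mfA}$ is the one given by the center $\mfC = \overline{\mfA} \cap \mfA'$, the diagonalizable operators for this decomposition are precisely the elements of $\mfC$, and the decomposable operators form the von Neumann algebra $\mfC'$. Since $\mfB \subset \mfA' \subset \mfC'$, every element of $\mfB$ is decomposable. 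Thus Proposition~\ref{prop:C*normseparable-decomposable} provides a decomposition $(\varphi_p)_{p \in X}$ of $\mfB$ by $C^*$-algebra representations $\varphi_p : \mfB \rightarrow \cB(\cH_p)$ with $B(p) = \varphi_p(B)$ almost everywhere, for each $B \in \mfB$.

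Next I would define $\rho_p := \varphi_p|_{\U(\mfB)_\NN}$ and verify properties (1) and (2). For (2), note that if $U \in \U(\mfB)_\NN$, then $U$ is in particular an element of $\mfB$, so $U(p) = \varphi_p(U) = \rho_p(U)$ for almost every $p \in X$, which is exactly the required decomposition statement. For (1), I would use that each $\varphi_p$ is a $C^*$-algebra representation sending unit to unit (Remark~\ref{rmk:decompositions-unit}); consequently $\varphi_p$ maps unitaries to unitaries, so $\rho_p(U) = \varphi_p(U) \in \U(\cH_p)$, and $\rho_p$ is a group homomorphism into $\U(\cH_p)$. The only genuinely nontrivial point here is the \emph{continuity} of $\rho_p$ as a map $\U(\mfB)_\NN \rightarrow \U(\cH_p)$: I expect this to follow from the fact that a $C^*$-algebra representation is norm-contractive, hence norm-continuous, so its restriction to the unitary group (carrying the norm topology via the $\NN$ subscript) is continuous into $\cB(\cH_p)$ with the norm topology, a fortiori into $\U(\cH_p)$ with the strong-operator topology. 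This is the step I would handle most carefully, since it is where the choice of the norm topology on $\U(\mfB)_\NN$ is essential.

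For the final clause, I would invoke Definition~\ref{def:vonNeumann-decomposition} together with the central decomposition. Since $\overline{\mfA} = \mfB'$ by commutant duality, and $\mfB$ is a strong-operator dense norm-separable $C^*$-subalgebra of its own weak closure $\overline{\mfB} = \mfA'$, the decomposition of the von Neumann algebra $\overline{\mfA} = \mfB'$ along the central decomposition should be computed fiberwise as the commutant of $\varphi_p(\mfB)$. Concretely, I would argue that the central decomposition of $\overline{\mfA}$ has fibers $\overline{\varphi_p(\mfB)}{}'$ for almost every $p$, and since $\varphi_p(\mfB) = \rho_p(\U(\mfB)_\NN)$ generates the same von Neumann algebra (as $\mfB$ is linearly spanned by its unitaries), this commutant equals $\rho_p(\U(\mfB)_\NN)' = \End_{\U(\mfB)_\NN}(\cH_p)$. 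This identifies the fibers as claimed.

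The main obstacle, and the step requiring the most care, is establishing that the fibers of the central decomposition of $\overline{\mfA}$ are genuinely the commutants $\End_{\U(\mfB)_\NN}(\cH_p)$ almost everywhere. This requires knowing that taking commutants commutes with direct integral decomposition in the relevant sense — that the decomposition of $\mfB' = \overline{\mfA}$ is given fiberwise by $(\overline{\varphi_p(\mfB)})'$. I would appeal to the general theory of decomposable von Neumann algebras from \cite[Section~14.1]{KRvolII}, specifically the compatibility of commutants with direct integrals for algebras of decomposable operators, to justify that the fiberwise commutant of the decomposition of $\mfB$ yields the decomposition of the commutant $\mfB' = \overline{\mfA}$. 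The bookkeeping between "generated by $\rho_p(\U(\mfB)_\NN)$" and "generated by $\varphi_p(\mfB)$" is routine once one recalls that every element of a $C^*$-algebra is a linear combination of four unitaries, so the two generate the same von Neumann algebra fiberwise.
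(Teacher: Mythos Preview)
Your proposal is correct and follows essentially the same route as the paper's proof: decompose the norm-separable $\mfB$ via Proposition~\ref{prop:C*normseparable-decomposable}, restrict the resulting $C^*$-representations $\varphi_p$ to the unitary group to obtain $\rho_p$, use norm-contractivity for continuity, and then invoke the compatibility of commutants with direct integrals (the paper cites \cite[Proposition~14.1.24]{KRvolII} specifically) to identify $(\overline{\mfA})_p$ with $\End_{\U(\mfB)_\NN}(\cH_p)$. The only cosmetic difference is that the paper explicitly extends $\rho_p$ by the trivial representation on the exceptional null set so that (1) holds literally for every $p \in X$, whereas you leave this implicit in the almost-everywhere convention.
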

\begin{proof}
	Let us consider a decomposition $(\varphi_p)_{p \in X}$ of $\mfB$ as given in Proposition~\ref{prop:C*normseparable-decomposable}. By Remark~\ref{rmk:decompositions-unit} there exists a null subset $N \subset X$ such that for every $p \in X$ the map $\varphi_p : \mfB \rightarrow \cB(\cH_p)$ is a $C^*$-algebra representation that takes the unit onto the unit. It follows that for every $p \in X \setminus N$, the map $\rho_p = \varphi_p|_{\U(\mfB)_\NN} : \U(\mfB)_\NN \rightarrow \U(\cH_p)_\NN$ is a continuous homomorphism of topological groups, where the target is endowed with the norm topology. After weakening the topology in $\U(\cH_p)_\NN$ to its strong-operator topology, the map $\rho_p$ remains continuous and so yields a unitary representation for every $p \in X \setminus N$. 
	
	If we define, for every $p \in N$, the map $\rho_p : \U(\mfB)_\NN \rightarrow \U(\cH_p)$ as the trivial unit representation, then the family $(\rho_p)_{p \in X}$ satisfies (1) and (2) by the previous paragraph. Note also that $\rho_p = \varphi_p|_{\U(\mfB)_\NN}$, for almost every $p \in X$, for the chosen decomposition $(\varphi_p)_{p \in X}$ of $\mfB$.
	
	On the other hand, for almost every $p \in X \setminus N$, the decomposition of the von Neumann algebra $\overline{\mfA}$ is given~by
	\[
		(\overline{\mfA})_p = ((\mfA')_p)' 
				= \big(\overline{\varphi_p(\mfB)}\big)' 
				= \rho_p(\U(\mfB)_\NN)'
				= \End_{\U(\mfB)_\NN}(\cH_p)
	\]
	where we have applied in the first identity \cite[Proposition~14.1.24]{KRvolII} to the (mutually commutant of each other) von Neumann algebras $\overline{\mfA}$ and $\mfA'$, that clearly contain the diagonalizable operators. The second identity follows from Definition~\ref{def:vonNeumann-decomposition} and the fact that $\mfA$ and $\mfB$ are commutant dual (see Definition~\ref{def:commutant-dual}). And the third identity follows from the previous paragraph. This completes the proof of the statement.
\end{proof}

\begin{remark}\label{rmk:C*algebra-intdecomp-group}
	The conclusions of Theorem~\ref{thm:C*algebra-intdecomp-group} yield a number of important improvements over those of Theorem~\ref{thm:C*algebras-group}, as well as for known results for unitary representations. For a $C^*$-subalgebra $\mfA$ of $\cB(\cH)$, once we choose a norm-separable commutant dual $\mfB$, we can replace the group $\U(\mfA')$ given in the latter with the subgroup $\U(\mfB)_\NN$ obtained in the former. This yields a new assignment
	\begin{equation}\label{eq:mfA-to-U(mfB)NN}
		\mfA \longmapsto \U(\mfB)_\NN,
	\end{equation}
	that allows us to work with a smaller subgroup whose unitary representation can actually be decomposed as stated in Theorem~\ref{thm:C*algebra-intdecomp-group}. This is achieved so that the ``component intertwining operators'' $\End_{\U(\mfB)_\NN}(\cH_p)$, for almost every $p \in X$, provide the decomposition of the von Neumann algebra $\overline{\mfA}$. In particular, although the above assignment does depend on the choice of $\mfB$, both the components of the representation of $\U(\mfB)_\NN$ and their intertwining operators yield the von Neumann algebra decompositions of $\mfA'$ and $\overline{\mfA}$, respectively. This claim follows from the proof of Theorem~\ref{thm:C*algebra-intdecomp-group}. Since these decompositions are unique (almost everywhere) the information provided by the group $\U(\mfB)_\NN$, as it relates to $\mfA$, is unique.
\end{remark}

As a consequence of the arguments used in the proof of Theorem~\ref{thm:C*algebra-intdecomp-group} we obtain the next result which has its own interest for representation theory.

\begin{corollary}\label{cor:group-decomposition}
	Let $\cH$ be a separable Hilbert space, let $G \subset \U(\cH)_\NN$ be a closed norm-separable subgroup with its natural unitary representation on $\cH$ and let $\mfR = G''$ be the von Neumann algebra generated by $G$. Let us assume that the central direct integral decomposition of $\cH$ associated to $\mfR$ is given by the family $(\cH_p)_{p \in X}$ over $(X,\mu)$. Then, there exists a family $(\rho_p)_{p \in X}$ of unitary representations $\rho_p : G \rightarrow \U(\cH_p)$, for every $p \in X$, such that the following properties hold.
	\begin{enumerate}
		\item The group $G$ consists of decomposable operators and $(\rho_p)_{p \in X}$ decomposes the representation of $G$ on $\cH$: for every $U \in G$ we have $U(p) = \rho_p(U)$, for almost every $p \in X$.
		\item The family $\big(\rho_p(G)''\big)_{p \in X}$ yields the decomposition of the von Neumann algebra $\mfR$.
	\end{enumerate}
\end{corollary}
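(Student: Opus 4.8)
The plan is to imitate the proof of Theorem~\ref{thm:C*algebra-intdecomp-group}, now taking for the auxiliary norm-separable $C^*$-algebra the one generated by $G$ itself. First I would let $\mfB$ be the $C^*$-subalgebra of $\cB(\cH)$ generated by $G$. Since $G$ is norm-separable, a countable norm-dense subset of $G$ generates $\mfB$ as a $C^*$-algebra, so $\mfB$ is norm-separable; moreover $I \in G \subset \mfB$, so $\mfB$ is unital. Because $\mfR = G''$ is a von Neumann algebra containing $G$, it contains $\mfB$, and conversely $\mfB \subset G''$ forces $\overline{\mfB} = \mfB'' = G'' = \mfR$, so $\mfB$ is a strong-operator dense norm-separable $C^*$-subalgebra of $\mfR$. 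As noted just before Definition~\ref{def:vonNeumann-centraldecomposition}, the central decomposition associated to $\mfR$ is the one given by the center $\mfC$ of $\mfR$, and $\mfR \subset \mfC'$ shows $\mfR$, hence $\mfB$ and $G$, consist of decomposable operators for this direct integral.

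Next I would invoke Proposition~\ref{prop:C*normseparable-decomposable} to obtain a decomposition $(\varphi_p)_{p \in X}$ of $\mfB$, so that each $\varphi_p : \mfB \rightarrow \cB(\cH_p)$ is a $C^*$-algebra representation and $A(p) = \varphi_p(A)$ almost everywhere for each $A \in \mfB$. On the conull set where $\varphi_p$ is unital I would set $\rho_p = \varphi_p|_G$, and I would define $\rho_p$ to be the trivial representation on the complementary null set. The same continuity argument as in Theorem~\ref{thm:C*algebra-intdecomp-group} applies verbatim: $\varphi_p$ is norm-contractive, so $\rho_p$ is a norm-continuous homomorphism $G \rightarrow \U(\cH_p)_\NN$, and it remains continuous after weakening the target to the strong-operator topology, yielding a unitary representation. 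Since $G \subset \mfB$, property (1) is then immediate, because $G$ consists of decomposable operators and $U(p) = \varphi_p(U) = \rho_p(U)$ almost everywhere for every $U \in G$.

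For property (2) I would use Definition~\ref{def:vonNeumann-decomposition}, which gives the decomposition $(\mfR_p)_{p \in X}$ of $\mfR$ via $\mfR_p = \overline{\varphi_p(\mfB)}$ (strong-operator closure) for almost every $p$. The crux is the identification $\varphi_p(\mfB) = C^*(\rho_p(G))$: because $\mfB$ is the norm-closure of the $*$-algebra generated by $G$, continuity of $\varphi_p$ gives $\varphi_p(\mfB) \subseteq C^*(\varphi_p(G))$, while $\varphi_p(\mfB)$ is itself a $C^*$-algebra containing $\varphi_p(G) = \rho_p(G)$, which forces the reverse inclusion. Taking strong-operator closures and applying the Double Commutant Theorem then yields $\mfR_p = \overline{C^*(\rho_p(G))} = C^*(\rho_p(G))'' = \rho_p(G)''$ almost everywhere, which is exactly property (2).

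The main obstacle, though a mild one, is this last identification $\overline{\varphi_p(\mfB)} = \rho_p(G)''$: one must verify that generation of a $C^*$-algebra by a subset is preserved under the fiber representations $\varphi_p$, and that the resulting equalities hold simultaneously for almost every $p$ after discarding the single null set on which $\varphi_p$ may fail to be unital or the decomposition identities fail. Everything else is a direct transcription of the argument already carried out for Theorem~\ref{thm:C*algebra-intdecomp-group}, the only change being that $\mfB$ is now the $C^*$-algebra generated by $G$ rather than a separately chosen commutant dual algebra.
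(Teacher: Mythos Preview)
Your proposal is correct and follows essentially the same approach as the paper: both let $\mfB$ be the $C^*$-algebra generated by $G$, decompose $\mfB$ via Proposition~\ref{prop:C*normseparable-decomposable}, restrict to $G$ to obtain the $\rho_p$, and then identify $\overline{\varphi_p(\mfB)}$ with $\rho_p(G)''$. The only cosmetic difference is that the paper phrases the last step as $\varphi_p(\mfB)'' = \varphi_p(\cV)'' = \varphi_p(G)''$ (with $\cV$ the linear span of $G$), whereas you pass through $\varphi_p(\mfB) = C^*(\rho_p(G))$; these are equivalent.
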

\begin{proof}
	Let $\mfB$ denote the $C^*$-algebra generated by $G$. Then, $\mfB$ is the norm-closure of the linear space $\cV$ generated by $G$. Hence, the norm-separability of $G$ implies the same property for $\mfB$. Furthermore, we have $\mfR = \overline{\mfB}$.
	
	Following the arguments from the proof of Theorem~\ref{thm:C*algebra-intdecomp-group}, the norm separability of the $C^*$-algebra $\mfB$ allows us to realize $\cH$ as the direct integral of $(\cH_p)_{p \in X}$ over $(X,\mu)$ so that this yields precisely the central direct integral decomposition associated to $\mfR$. Such arguments allows us as well to obtain a decomposition $(\varphi_p)_{p \in X \setminus N}$ of $\mfB$, for some null subset $N$, such that the restrictions $\rho_p = \varphi_p|_G : G \rightarrow \U(\cH_p)$, where $p \in X \setminus N$, yield unitary representations for which conclusion (1) holds. 
	
	On the other hand, the decomposition of the von Neumann algebra $\mfR$ is given by choosing for each $p \in X \setminus N$ the von Neumann algebra
	\[
		\overline{\varphi_p(\mfB)} = \varphi_p(\mfB)''
			= \varphi_p(\cV)'' = \varphi_p(G)'' = \rho_p(G)'',
	\]
	where the second identity follows from the fact that $\mfB$ is the norm closure of $\cV$. Hence, conclusion (2) holds.
	
	Note that we can define $\rho_p$ arbitrarily for $p \in N$ without changing the previous claims. Hence, this completes the proof.
\end{proof}

\begin{remark}\label{rmk:group-decompositions}
	We observe that the decomposition of a unitary representation of a topological group $G$ as a direct integral of unitary representations is not at all trivial. It usually requires from $G$ to be a Lie group (see \cite{DixmierCAlgebras,MautnerProcAMS}), and even in this case the decomposition may not be uniquely associated to the group (see \cite{DixmierCAlgebras}). We have achieved such decomposition for the groups considered in Theorem~\ref{thm:C*algebra-intdecomp-group} and Corollary~\ref{cor:group-decomposition} which in general are not Lie groups. The possible lack of uniqueness with respect to the group is replaced by uniqueness with respect to the $C^*$-algebras and von Neumann algebras considered, as we have observed above. This is perfectly fine for our purposes, since our focus lies on $C^*$-algebras.
	
	On the other hand, we note that \cite[Theorem~1.2]{MautnerAnnMathI} is a related result that considers decompositions of arbitrary self-adjoint families of operators. However, the decompositions considered therein are restricted to some choices that do not allow in general to obtain the sort of decomposition of unitary representations that were obtained in Theorem~\ref{thm:C*algebra-intdecomp-group}.
\end{remark}

\subsection{Norm-separable and Abelian operator $C^*$-algebras}
As it has been highlighted in Theorem~\ref{thm:C*algebra-intdecomp-group}, for a given $C^*$-subalgebra $\mfA$ of $\cB(\cH)$, where $\cH$ is a separable Hilbert space, it is of interest to consider a norm-separable $C^*$-subalgebra $\mfB$ commutant dual of $\mfA$ (see Remark~\ref{rmk:generates-commutant} and Definition~\ref{def:commutant-dual}). We will now assume that $\mfA$ is norm-separable as well. In this case, we have two norm-separable commutant dual $C^*$-subalgebras $\mfA$ and $\mfB$ of $\cB(\cH)$. This yields an obvious symmetric setup. We also recall that, in such case, the common center of $\overline{\mfA}$ and $\overline{\mfB}$ is given by their intersection.

With the previous remarks in mind, we obtain the next result that allows to simultaneously decompose norm-separable commutant dual $C^*$-algebras. We recall the standing convention stated in Remark~\ref{rmk:decompositions-unit}.

\begin{proposition}\label{prop:C*algebra-mutualcomm-normseparable-intdecomp}
	Let $\cH$ be a separable Hilbert space and let $\mfA, \mfB$ be two norm-separable commutant dual $C^*$-subalgebras of $\cB(\cH)$. Let us assume that the central direct integral decomposition of $\cH$ associated to $\overline{\mfA}$ is given by the direct integral of $(\cH_p)_{p \in X}$ over $(X,\mu)$. Then, there exists two families $(\psi_p)_{p \in X}$ and $(\varphi_p)_{p \in X}$ of $C^*$-algebra representations of $\mfA$ and $\mfB$, respectively, that map $\psi_p : \mfA \rightarrow \cB(\cH_p)$ and $\varphi_p : \mfB \rightarrow \cB(\cH_p)$, for almost every $p \in X$, and that satisfy the following properties.
	\begin{enumerate}
		\item The families $(\psi_p)_{p \in X}$ and $(\varphi_p)_{p \in X}$ decompose the $C^*$-subalgebras $\mfA$ and $\mfB$, respectively: i.e.~we have for every $A \in \mfA$ and $B \in \mfB$ that $A(p) = \psi_p(A)$ and $B(p) = \varphi_p(B)$ holds for almost every $p \in X$.
		\item For almost every $p \in X$, the $C^*$-subalgebras $\psi_p(\mfA)$ and $\varphi_p(\mfB)$ of $\cB(\cH_p)$ are commutant dual: i.e.~we have $\overline{\psi_p(\mfA)} = \varphi_p(\mfB)'$ and $\overline{\varphi_p(\mfB)} = \psi_p(\mfA)'$, where the closures are taken in the strong-operator topology of $\cB(\cH_p)$.
	\end{enumerate}
\end{proposition}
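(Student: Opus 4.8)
The plan is to recognize that the single direct integral decomposition in the statement already serves both algebras, to extract the fibrewise maps from norm-separability, and then to transport the commutant-dual relation to the fibres by decomposing commutants. First I would observe that, since $\mfA$ and $\mfB$ are commutant dual, Remark~\ref{rmk:generates-commutant} gives that $\mfC = \overline{\mfA} \cap \overline{\mfB}$ is the common center of both $\overline{\mfA}$ and $\overline{\mfB}$. Hence the central direct integral decomposition of $\cH$ associated to $\overline{\mfA}$, i.e.~the one determined by $\mfC$ through Proposition~\ref{prop:Abelian-direct-integral} and Definition~\ref{def:vonNeumann-centraldecomposition}, is literally the same as the one associated to $\overline{\mfB}$. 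In this decomposition $\mfC$ is the algebra of diagonalizable operators, so from $\overline{\mfA} \subset \mfC'$ and $\overline{\mfB} \subset \mfC'$ we conclude that both $\mfA$ and $\mfB$ consist of decomposable operators.

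Next, since $\mfA$ and $\mfB$ are norm-separable $C^*$-subalgebras of the decomposable operators, Proposition~\ref{prop:C*normseparable-decomposable} supplies decompositions $(\psi_p)_{p \in X}$ of $\mfA$ and $(\varphi_p)_{p \in X}$ of $\mfB$, which is exactly conclusion (1). Moreover, because $\mfA$ and $\mfB$ are strong-operator dense norm-separable $C^*$-subalgebras of $\overline{\mfA}$ and $\overline{\mfB}$ respectively, Definition~\ref{def:vonNeumann-decomposition} lets me use these very decompositions to realize the decompositions of the ambient von Neumann algebras, namely $(\overline{\mfA})_p = \overline{\psi_p(\mfA)}$ and $(\overline{\mfB})_p = \overline{\varphi_p(\mfB)}$, the closures being strong-operator closures in $\cB(\cH_p)$, for almost every $p \in X$.

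For conclusion (2), the decisive step is to decompose the commutant relation. Since $\overline{\mfA}$ and $\mfA' = \overline{\mfB}$ are the commutants of each other and both contain the diagonalizable operators $\mfC$, I would apply \cite[Proposition~14.1.24]{KRvolII} — exactly as in the proof of Theorem~\ref{thm:C*algebra-intdecomp-group} — to obtain $(\overline{\mfB})_p = \big((\overline{\mfA})_p\big)'$, and, by the symmetric application, $(\overline{\mfA})_p = \big((\overline{\mfB})_p\big)'$, for almost every $p$. Finally I would rewrite these using the elementary fact that the commutant of a set equals the commutant of the von Neumann algebra it generates, so that $\psi_p(\mfA)' = (\overline{\psi_p(\mfA)})' = ((\overline{\mfA})_p)' = (\overline{\mfB})_p = \overline{\varphi_p(\mfB)}$ and, symmetrically, $\varphi_p(\mfB)' = \overline{\psi_p(\mfA)}$, which is precisely conclusion (2).

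The main obstacle is the correct invocation of \cite[Proposition~14.1.24]{KRvolII}: one must verify its hypotheses, namely that the two algebras are mutually commutant and each contains the diagonalizable operators, and then keep careful track of the almost-everywhere qualifiers, intersecting the finitely many null sets arising from the two $C^*$-algebra decompositions and from the two applications of the cited proposition. Once the central decomposition is recognized as common to both $\overline{\mfA}$ and $\overline{\mfB}$, everything else is bookkeeping.
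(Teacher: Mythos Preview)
Your proposal is correct and follows essentially the same route as the paper: obtain the two decompositions from norm-separability (the paper cites Remark~\ref{rmk:vonNeumann-decomposition}, you cite Proposition~\ref{prop:C*normseparable-decomposable} and Definition~\ref{def:vonNeumann-decomposition}, which amount to the same thing), and then invoke \cite[Proposition~14.1.24]{KRvolII} exactly as in the proof of Theorem~\ref{thm:C*algebra-intdecomp-group} to carry the commutant relation to the fibres. Your write-up is in fact a bit more explicit than the paper's in spelling out why the central decomposition serves both algebras and why the hypotheses of the cited proposition are met.
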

\begin{proof}
	The existence of both families $(\psi_p)_{p \in}$ and $(\varphi_p)_{p \in X}$ satisfying (1) follows from Remark~\ref{rmk:vonNeumann-decomposition} since both $\mfA$ and $\mfB$ are norm-separable. It follows as well from this remark and Definition~\ref{def:vonNeumann-decomposition} that the assignments $p \mapsto \overline{\psi_p(\mfA)}$ and $p \mapsto \overline{\varphi_p(\mfB)}$ yield the central decompositions of $\overline{\mfA}$ and $\overline{\mfB}$, respectively.
	
	Since $\overline{\mfA}$ and $\overline{\mfB}$ are commutant of each other, an application of \cite[Proposition~14.1.24]{KRvolII} as in the proof of Theorem~\ref{thm:C*algebra-intdecomp-group} yields for almost every~$p \in X$
	\[
		\overline{\psi_p(\mfA)} = (\overline{\mfA})_p 
			= \big((\overline{\mfB})'\big)_p
			= \big((\overline{\mfB})_p\big)' = \big(\overline{\varphi_p(\mfB)}\big)'
			= \varphi_p(\mfB)',
	\]
	and so (2) follows, thus completing the proof.
\end{proof}

As a consequence we obtain the next result that improves the conclusions from Theorem~\ref{thm:C*algebra-intdecomp-group} for the case of norm-separable $C^*$-algebras.

\begin{theorem}\label{thm:C*algebra-normseparable-intdecomp-group}
	Let $\cH$ be a separable Hilbert space and $\mfA, \mfB$ be two norm-separable commutant dual $C^*$-subalgebras of $\cB(\cH)$. Let us assume that the central direct integral decomposition of $\cH$ associated to $\overline{\mfA}$ is given by the direct integral of $(\cH_p)_{p \in X}$ over $(X,\mu)$. Then, we have $\overline{\mfA} = \End_{\U(\mfB)_\NN}(\cH)$ and the representations of both $\mfA$ and $\U(\mfB)_\NN$ can be decomposed preserving this property. More precisely, there exist $(\psi_p)_{p \in X}$ and $(\rho_p)_{p \in X}$ families of $C^*$-algebra representations and unitary representations of $\mfA$ and $\U(\mfB)_\NN$, respectively, such that, for almost every $p \in X$, they map $\psi_p : \mfA \rightarrow \cB(\cH_p)$ and $\rho_p : \U(\mfB)_\NN \rightarrow \U(\cH_p)$ satisfying the following properties.
	\begin{enumerate}
		\item For every $A \in \mfA$ and $U \in \U(\mfB)_\NN$, we have $A(p) = \psi_p(A)$ and $U(p) = \rho_p(U)$, for almost every $p \in X$.
		\item We have $\overline{\psi_p(\mfA)} = \End_{\U(\mfB)_\NN}(\cH_p)$, for almost every $p \in X$.
	\end{enumerate}
\end{theorem}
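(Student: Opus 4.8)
The plan is to combine the two preceding results on norm-separable commutant dual algebras, namely Proposition~\ref{prop:C*algebra-mutualcomm-normseparable-intdecomp} and Theorem~\ref{thm:C*algebra-intdecomp-group}, with the elementary observation that the unitaries of a $C^*$-algebra linearly span it. First I would dispose of the global identity $\overline{\mfA} = \End_{\U(\mfB)_\NN}(\cH)$: since $\mfA$ and $\mfB$ are commutant dual with $\mfB$ norm-separable, the algebra $\mfB$ is a strong-operator dense norm-separable $C^*$-subalgebra of $\mfA'$, so this is exactly the content of Proposition~\ref{prop:C*algebra-group-normtopo}(1).

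For the decomposed statement, I would fix a single decomposition $(\varphi_p)_{p \in X}$ of $\mfB$ and apply Proposition~\ref{prop:C*algebra-mutualcomm-normseparable-intdecomp} to obtain a companion decomposition $(\psi_p)_{p \in X}$ of $\mfA$, so that conclusion (1) for $\mfA$ holds and, for almost every $p$, the components are commutant dual, i.e.\ $\overline{\psi_p(\mfA)} = \varphi_p(\mfB)'$. I would then set $\rho_p = \varphi_p|_{\U(\mfB)_\NN}$ exactly as in the proof of Theorem~\ref{thm:C*algebra-intdecomp-group}; that argument shows $\rho_p : \U(\mfB)_\NN \rightarrow \U(\cH_p)$ is a (strong-operator continuous) unitary representation for almost every $p$ and that $U(p) = \rho_p(U)$ holds almost everywhere for each fixed $U$, supplying the remainder of (1). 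Using the same $(\varphi_p)$ in both invocations is legitimate because decompositions are unique up to null sets.

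The crux is identity (2), for which I would argue $\End_{\U(\mfB)_\NN}(\cH_p) = \varphi_p(\mfB)'$ for almost every $p$. Here I would use the basic fact, recalled in Section~\ref{sec:U(H)-Polish-algebras}, that every element of $\mfB$ is a linear combination of unitary elements of $\mfB$; since $\varphi_p$ is linear, the linear span of $\rho_p(\U(\mfB)_\NN) = \varphi_p(\U(\mfB)_\NN)$ is all of $\varphi_p(\mfB)$, and a set and its linear span have the same commutant. Hence $\End_{\U(\mfB)_\NN}(\cH_p) = \rho_p(\U(\mfB)_\NN)' = \varphi_p(\mfB)'$, and combining this with $\overline{\psi_p(\mfA)} = \varphi_p(\mfB)'$ from the previous paragraph yields (2).

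The main obstacle here is bookkeeping rather than analysis: each cited result holds only almost everywhere, so I would intersect the finitely many conull sets on which the decomposition property of $\mfA$, the representation property of $\rho_p$, and the component commutant duality all hold, and redefine the families arbitrarily (say, by the trivial representation) off a common conull set. Beyond this, there is no substantial analytic difficulty, since all the hard direct-integral machinery has already been absorbed into Proposition~\ref{prop:C*algebra-mutualcomm-normseparable-intdecomp} and Theorem~\ref{thm:C*algebra-intdecomp-group}.
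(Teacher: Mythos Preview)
Your proposal is correct and follows essentially the same route as the paper: invoke Proposition~\ref{prop:C*algebra-group-normtopo}(1) for the global identity, take $(\psi_p)$ and $(\varphi_p)$ from Proposition~\ref{prop:C*algebra-mutualcomm-normseparable-intdecomp}, set $\rho_p = \varphi_p|_{\U(\mfB)_\NN}$ as in Theorem~\ref{thm:C*algebra-intdecomp-group}, and chain $\overline{\psi_p(\mfA)} = \varphi_p(\mfB)' = \rho_p(\U(\mfB)_\NN)' = \End_{\U(\mfB)_\NN}(\cH_p)$. Your explicit justification of the middle equality via the linear span of unitaries is exactly the reasoning the paper leaves implicit.
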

\begin{proof}
	We note that the first claim follows from Proposition~\ref{prop:C*algebra-group-normtopo}(1). On the other hand, we recall again the convention introduced in Remark~\ref{rmk:decompositions-unit}.
	
	Let us consider families $(\psi_p)_{p \in X}$ and $(\varphi_p)_{p \in X}$ as obtained in Proposition~\ref{prop:C*algebra-mutualcomm-normseparable-intdecomp}. With the same arguments used in the proof of Theorem~\ref{thm:C*algebra-intdecomp-group} we consider the family of unitary representations $(\rho_p = \varphi_p|_{\U(\mfB)_\NN})_{p \in X}$, where this choice is applied almost everywhere and completed arbitrarily on some null set. Hence, (1) from our statement follows from Proposition~\ref{prop:C*algebra-mutualcomm-normseparable-intdecomp}(1).
	
	Finally, for almost every $p \in X$ we have
	\[
		\overline{\psi_p(\mfA)} = \varphi_p(\mfB)' = \rho_p(\U(\mfB)_\NN)'
			= \End_{\U(\mfB)_\NN}(\cH_p),
	\]
	by Proposition~\ref{prop:C*algebra-mutualcomm-normseparable-intdecomp}(2). This completes the proof.
\end{proof}

We now consider the case of Abelian norm-separable $C^*$-algebras. For this it will be useful to recall the next definition.

\begin{definition}\label{def:multifree}
	A unitary representation of a topological group $G$ on a Hilbert space $\cH$ is called multiplicity-free when the von Neumann algebra $\End_G(\cH)$ is Abelian.
\end{definition}

We now establish the decomposition into irreducible pieces of multiplicity-free unitary representations in the setup provided by our constructions.

\begin{corollary}\label{cor:normseparable-Abelian-intdecomp}
	Let $\cH$ be a separable Hilbert space and $\mfA, \mfB$ be two norm-separable commutant dual $C^*$-subalgebras of $\cB(\cH)$. Let us assume that $\mfA$ is Abelian and consider the direct integral decomposition of $\cH$ associated to $\overline{\mfA}$ as given by the direct integral of $(\cH_p)_{p \in X}$ over $(X,\mu)$. Then, the unitary representation of $\U(\mfB)_\NN$ on $\cH$ is multiplicity-free and for any pair of families $(\psi_p)_{p \in X}$ and $(\rho_p)_{p \in X}$ as in Theorem~\ref{thm:C*algebra-normseparable-intdecomp-group} the following is further satisfied for almost every $p \in X$
	\begin{enumerate}
		\item $\psi_p(\mfA) = \C I_p$, where $I_p$ is the identity operator on $\cH_p$, and
		\item the unitary representation $\rho_p : \U(\mfB)_\NN \rightarrow \U(\cH_p)$ is irreducible.
	\end{enumerate}
\end{corollary}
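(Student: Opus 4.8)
The plan is to exploit the symmetry of the commutant dual setup together with the already-established decomposition machinery. Since $\mfA$ is Abelian, its commutant $\mfA' = \overline{\mfB}$ contains $\overline{\mfA}$, so $\overline{\mfA}$ is contained in its own commutant; that is, $\overline{\mfA}$ is an Abelian von Neumann algebra. Because $\overline{\mfA}$ is Abelian, it coincides with its center, and hence the central direct integral decomposition of $\cH$ associated to $\overline{\mfA}$ is precisely the one in which $\overline{\mfA}$ is the algebra of diagonalizable operators and $\mfA' = \overline{\mfB}$ is the algebra of all decomposable operators. First I would record that $\End_{\U(\mfB)_\NN}(\cH) = \overline{\mfA}$ is Abelian, so the representation of $\U(\mfB)_\NN$ on $\cH$ is multiplicity-free by Definition~\ref{def:multifree}.

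Next I would establish claim (1). By Theorem~\ref{thm:C*algebra-normseparable-intdecomp-group}(2) we have $\overline{\psi_p(\mfA)} = \End_{\U(\mfB)_\NN}(\cH_p)$ for almost every $p$, and the decomposition of the Abelian algebra $\overline{\mfA}$ into $\big(\overline{\psi_p(\mfA)}\big)_{p \in X}$ has components that are almost everywhere \emph{factors} (indeed, since $\overline{\mfA}$ equals its own center, the central decomposition produces components with trivial center). The key classical input is that the central decomposition of a von Neumann algebra yields factors almost everywhere, and that a factor acting on a Hilbert space whose diagonalizable algebra is all of $\cB(\cH_p)$ — which is the situation when $\overline{\mfA}$ is the full diagonalizable algebra — forces $\overline{\psi_p(\mfA)} = \C I_p$. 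Concretely, since $\overline{\mfA}$ is diagonalizable and equals the center of $\mfA'$, its components must be the scalars $\C I_p$ almost everywhere; this is exactly the statement that diagonalizable operators decompose as $f(p) I_p$. I would then conclude $\psi_p(\mfA) \subseteq \overline{\psi_p(\mfA)} = \C I_p$, and since representations preserve the unit, in fact $\psi_p(\mfA) = \C I_p$ for almost every $p$.

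Claim (2) is then immediate from (1) and Theorem~\ref{thm:C*algebra-normseparable-intdecomp-group}(2): for almost every $p$ we have
\[
	\End_{\U(\mfB)_\NN}(\cH_p) = \overline{\psi_p(\mfA)} = \C I_p,
\]
and an operator algebra of intertwining operators equal to $\C I_p$ means, by Schur's lemma (the elementary von Neumann algebra version: $\rho_p(\U(\mfB)_\NN)' = \C I_p$ forces irreducibility), that $\rho_p : \U(\mfB)_\NN \to \U(\cH_p)$ is irreducible. I would invoke here that $\rho_p(\U(\mfB)_\NN)' = \End_{\U(\mfB)_\NN}(\cH_p)$ by definition of the intertwining algebra on the component space.

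The main obstacle I anticipate is the almost-everywhere bookkeeping in claim (1): one must be careful that the identification of $\overline{\mfA}$ as the diagonalizable algebra, the conclusion that its central decomposition produces scalar components, and the passage from $\overline{\psi_p(\mfA)} = \C I_p$ back to $\psi_p(\mfA) = \C I_p$, all hold simultaneously on a single conull set. The cleanest route is to argue directly that, because $\overline{\mfA}$ is exactly the algebra of diagonalizable operators for this particular (central) direct integral, every element of $\overline{\mfA}$ decomposes as $f(p) I_p$ by the very definition of diagonalizable operators, so no appeal to a general ``factors almost everywhere'' theorem is strictly needed — the scalar nature of the components is built into the choice of decomposition. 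This sidesteps the subtle point and keeps the argument self-contained within the framework already developed.
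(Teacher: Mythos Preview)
Your overall strategy matches the paper's: observe that $\overline{\mfA}=\End_{\U(\mfB)_\NN}(\cH)$ is Abelian (hence the representation is multiplicity-free), recognize that $\overline{\mfA}$ is exactly the algebra of diagonalizable operators for this direct integral, deduce that the components $\psi_p(\mfA)$ are scalars, and then read off irreducibility from Theorem~\ref{thm:C*algebra-normseparable-intdecomp-group}(2).

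There is, however, one genuine gap. In your final paragraph you claim that appealing directly to the definition of diagonalizable operators ``sidesteps the subtle point'' of the almost-everywhere bookkeeping. It does not. The statement ``every $A\in\mfA$ decomposes as $f_A(p)I_p$ for almost every $p$'' gives you, for each $A$, a null exceptional set $N_A$ depending on $A$. To conclude that $\psi_p(\mfA)\subset\C I_p$ on a \emph{single} conull set you must control $\bigcup_{A\in\mfA} N_A$, and $\mfA$ is uncountable. This is exactly where the hypothesis that $\mfA$ is norm-separable is used: choose a countable norm-dense subset $\{A_n\}$, set $N=\bigcup_n N_{A_n}$ (still null), and for $p\notin N$ use that $\psi_p$ is a $*$-homomorphism, hence norm-continuous, to pass from $\psi_p(A_n)\in\C I_p$ for all $n$ to $\psi_p(\mfA)\subset\C I_p$. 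The paper's proof makes precisely this move (``the norm-separability of $\mfA$ and Theorem~\ref{thm:C*algebra-normseparable-intdecomp-group}(1) applied to the elements of a dense countable subset of $\mfA$''). Once you insert this step, your argument is complete and coincides with the paper's; the detour through ``factors almost everywhere'' in your second paragraph is unnecessary.
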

\begin{proof}
	Since $\overline{\mfA} = \End_{\U(\mfB)_\NN}(\cH)$ is Abelian, the unitary representation of $\U(\mfB)_\NN$ on $\cH$ is multiplicity-free. It also follows that for our choice of integral decomposition $\overline{\mfA}$ is precisely the von Neumann algebra of diagonalizable operators. Hence, the norm-separability of $\mfA$ and Theorem~\ref{thm:C*algebra-normseparable-intdecomp-group}(1) applied to the elements of a dense countable subset of $\mfA$ implies (1) from our statement for almost every $p \in X$. Once this is established, Theorem~\ref{thm:C*algebra-normseparable-intdecomp-group}(2) implies that $\End_{\U(\mfB)_\NN}(\cH_p) = \C I_p$ for almost every $p \in X$, and this yields the irreducibility of the unitary representations $\rho_p$. This proves (2). 
\end{proof}

\begin{remark}\label{rmk:normseparable-intdecomp}
	We proved in Theorem~\ref{thm:C*algebras-group} that any $C^*$-algebra $\mfA$ of operators acting on a Hilbert is strong-operator dense in the von Neumann algebra of intertwining operators of some unitary representation. Looking for more detailed descriptions our focus has been to study such an operator $C^*$-algebra $\mfA$ by choosing a commutant dual $C^*$-algebra. This leads us to Theorem~\ref{thm:C*algebra-intdecomp-group} where we proved that the group can be chosen nice enough allowing its unitary representation to be decomposed so that it provides a decomposition of $\overline{\mfA}$. Finally, we have proved in Theorem~\ref{thm:C*algebra-normseparable-intdecomp-group} that if $\mfA$ is norm-separable as well, then $\mfA$ itself can be decomposed so that its components are realized as intertwining operators for the components of the unitary representation. This provides a general setup to decompose all the operators involved: those in the given $C^*$-algebra $\mfA$ and those from the group with respect to which $\mfA$ consists of intertwining operators. Furthermore, this is achieved so that the same relation, intertwining operator vs action operator, is preserved in their components for the direct integral decomposition.
	
	If we further assume that $\mfA$ is Abelian, then Corollary~\ref{cor:normseparable-Abelian-intdecomp} provides additional information. The corresponding unitary representation is multiplicity-free and can be decomposed into irreducible representations with respect to the direct integral decomposition of the Hilbert space. Note that the group under consideration, $\U(\mfB)_\NN$ where $\mfB$ is a norm-separable $C^*$-algebra commutant dual of $\mfA$, is in general far from being a Lie group or even a locally compact group. Hence, the decomposition of the unitary representation of $\U(\mfB)_\N$ does not follow from the usual representation theory for the latter sort of groups.
\end{remark}

As a consequence, we now obtain an extension of Corollary~\ref{cor:group-decomposition} for multiplicity-free representations. Again, this result has an interest of its own.

\begin{corollary}\label{cor:group-decomposition-multfree}
	Let $\cH$ be a separable Hilbert space and let $G \subset \U(\cH)_\NN$ be a closed norm-separable subgroup whose representation on $\cH$ is multiplicity-free. Let us also denote by $\mfR = G''$ the von Neumann algebra generated by $G$. If the central direct integral decomposition of $\cH$ associated to $\mfR$ is given by $(\cH_p)_{p \in X}$ over $(X,\mu)$, then there exists a family $(\rho_p)_{p \in X}$ of unitary representations $\rho_p : G \rightarrow \U(\cH_p)$ that decomposes $G$ into irreducible components. More precisely, the following properties hold.
	\begin{enumerate}
		\item The group $G$ consists of decomposable operators, and for every $U \in G$ we have $U(p) = \rho_p(U)$, for almost every $p \in X$.
		\item For almost every $p \in X$, the representation $\rho_p : G \rightarrow \U(\cH_p)$ is irreducible.
	\end{enumerate}
\end{corollary}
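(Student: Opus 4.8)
The plan is to realize this statement as the group-theoretic dual of Corollary~\ref{cor:normseparable-Abelian-intdecomp}, in exactly the way that Corollary~\ref{cor:group-decomposition} is the dual of Theorem~\ref{thm:C*algebra-intdecomp-group}. First I would let $\mfB$ be the $C^*$-algebra generated by $G$; as in the proof of Corollary~\ref{cor:group-decomposition}, the norm-separability of $G$ forces $\mfB$ to be norm-separable, we have $G \subset \U(\mfB)_\NN$, and $\overline{\mfB} = \mfR$. The multiplicity-free hypothesis says precisely that $\End_G(\cH) = G' = \mfR'$ is Abelian. Since an Abelian commutant satisfies $\mfR' \subset (\mfR')' = \mfR$, the center of $\mfR$ collapses to $\mfC = \mfR \cap \mfR' = \mfR'$; hence the central direct integral decomposition of $\cH$ associated to $\mfR$ is the one given by the Abelian von Neumann algebra $\mfR'$. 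This identification is the first point I would check carefully, since it is what makes the hypothesis of the present corollary line up with that of Corollary~\ref{cor:normseparable-Abelian-intdecomp}.

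Next I would manufacture a commutant dual pair feeding into the Abelian corollary. Using the fact recalled in Remark~\ref{rmk:vonNeumann-decomposition}, I would choose a strong-operator dense norm-separable $C^*$-subalgebra $\mfA$ of $\mfR'$; since $\mfR'$ is Abelian, so is $\mfA$, and $\overline{\mfA} = \mfR'$. Then $\mfA$ and $\mfB$ are norm-separable commutant dual $C^*$-subalgebras of $\cB(\cH)$ with $\mfA$ Abelian, and by the previous paragraph the decomposition of $\cH$ associated to $\overline{\mfA} = \mfR'$ is exactly the central decomposition associated to $\mfR$ in the statement. Fixing a single decomposition $(\varphi_p)_{p \in X}$ of $\mfB$ (unique up to null sets by Remark~\ref{rmk:vonNeumann-decomposition}), I would set $\rho_p = \varphi_p|_G$. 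Corollary~\ref{cor:group-decomposition} then gives directly that $G$ consists of decomposable operators and that $U(p) = \rho_p(U)$ almost everywhere, which is conclusion (1).

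For the irreducibility in (2), I would apply Corollary~\ref{cor:normseparable-Abelian-intdecomp} to the pair $(\mfA,\mfB)$, which yields for almost every $p$ that $\varphi_p|_{\U(\mfB)_\NN} : \U(\mfB)_\NN \to \U(\cH_p)$ is irreducible, i.e.\ $\varphi_p(\U(\mfB)_\NN)' = \C I_p$. To transfer this down to $G$ I would use that $G$ already generates $\mfB$ as a $C^*$-algebra, exactly as $\U(\mfB)_\NN$ does (the latter because every element of a $C^*$-algebra is a linear combination of its unitaries). Consequently $\varphi_p(G)$ and $\varphi_p(\U(\mfB)_\NN)$ generate the same $C^*$-algebra $\varphi_p(\mfB)$, so, reprising the commutant computation in the proof of Corollary~\ref{cor:group-decomposition}, $\rho_p(G)'' = \varphi_p(G)'' = \overline{\varphi_p(\mfB)} = \varphi_p(\U(\mfB)_\NN)''$; taking commutants gives $\rho_p(G)' = \varphi_p(\U(\mfB)_\NN)' = \C I_p$ for almost every $p$, which is the asserted irreducibility of $\rho_p$.

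The main obstacle in spirit is this last transfer: irreducibility is proved in Corollary~\ref{cor:normseparable-Abelian-intdecomp} for the large Polish group $\U(\mfB)_\NN$, whereas the conclusion is wanted for the a priori much smaller $G$. It dissolves once one notices that $G$ generates $\mfB$, so passing to bicommutants loses no information and the two groups have identical commutants in each fiber. The only remaining care is bookkeeping of the ``almost everywhere'' qualifications: I would amalgamate the countably many null sets coming from the chosen decomposition of $\mfB$ and from the applications of Corollary~\ref{cor:group-decomposition} and Corollary~\ref{cor:normseparable-Abelian-intdecomp}, and define $\rho_p$ arbitrarily on the resulting null set, as is done in the cited proofs.
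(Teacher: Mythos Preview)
Your proposal is correct and follows essentially the same route as the paper: build $\mfB$ as the norm-separable $C^*$-algebra generated by $G$, choose a norm-separable Abelian commutant dual $\mfA$, identify the central decomposition for $\mfR$ with the one for $\overline{\mfA}=\mfR'$, and then invoke Corollary~\ref{cor:normseparable-Abelian-intdecomp}. The only cosmetic difference is that for irreducibility the paper computes $\rho_p(G)' = \varphi_p(\mfB)' = \overline{\psi_p(\mfA)} = \C I_p$ via Proposition~\ref{prop:C*algebra-mutualcomm-normseparable-intdecomp}(2) and part~(1) of Corollary~\ref{cor:normseparable-Abelian-intdecomp}, whereas you use part~(2) of that corollary and transfer from $\U(\mfB)_\NN$ to $G$; both are the same commutant calculation.
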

\begin{proof}
	Let us consider $\mfB$ the $C^*$-subalgebra of $\cB(\cH)$ generated by $G$. In particular, and as noted before, $\mfB$ is norm-separable. Choose $\mfA$ a norm-separable $C^*$-algebra commutant dual of $\mfB$. Hence, we have
	\[
	\overline{\mfA} = \mfB' = G' = \End_G(\cH),
	\]
	where the second identity follows from our choice of $\mfB$ in terms of $G$. Since $G$ has a multiplicity-free representation on $\cH$, we conclude that $\mfA$ is Abelian and so that the hypothesis from Corollary~\ref{cor:normseparable-Abelian-intdecomp} hold. An application of the latter to our current setup yields a family of unitary representations $(\rho_p)_{p \in X}$ that satisfy (1) from our statement since $G \subset \U(\mfB)_\NN$.
	
	Let us now consider the families $(\psi_p)_{p \in X}$ and $(\varphi_p)_{p \in X}$ of $C^*$-algebra representations of $\mfA$ and $\mfB$, respectively, from the proof of Theorem~\ref{thm:C*algebra-normseparable-intdecomp-group}. In particular, we have $\rho_p = \varphi_p|_{\U(\mfB)_\NN}$, for almost every $p \in X$. Hence, we obtain
	\[
		\End_G(\cH_p) = \rho_p(G)' 
			= \varphi_p(\mfB)' = \overline{\psi_p(\mfA)}
			= \overline{\C I_p} = \C I_p,
	\]
	for almost every $p \in X$. The second identity follows from the choice of $\mfB$ in terms of $G$. The third identity follows from Proposition~\ref{prop:C*algebra-mutualcomm-normseparable-intdecomp}(2). And the fourth identity is a consequence of Corollary~\ref{cor:normseparable-Abelian-intdecomp}(1). The end conclusion is that $\rho_p : G \rightarrow \U(\cH_p)$ is irreducible for almost every $p \in X$. This completes the proof.
\end{proof}

We now consider the case of $C^*$-algebras generating maximal Abelian von Neumann algebras. We will use the next known fact (see \cite[Theorem~14.5]{ConwayOT} for a similar result) whose proof we include for the sake of completeness. Our proof is based on the previous constructions.

\begin{lemma}\label{lem:maximalAbelian-vonNeumann}
	Let $\cH$ be a separable Hilbert space and $\mfR$ be a von Neumann algebra acting on $\cH$. Then, $\mfR$ is a maximal Abelian subalgebra of $\cB(\cH)$ if and only if there exists a locally compact Polish space $X$ with a Radon measure $\mu$ and a unitary map $U : \cH \rightarrow L^2(X,\mu)$ such that $U \mfR U^* = L^\infty(X,\mu)$, where the latter is identified with the corresponding von Neumann algebra of multiplier operators.
\end{lemma}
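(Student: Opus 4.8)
The plan is to prove both implications of this characterization, leaning on the direct integral machinery established earlier, particularly Proposition~\ref{prop:Abelian-direct-integral}. The backward direction is the easy one: if $U \mfR U^* = L^\infty(X,\mu)$ acting as multiplication operators on $L^2(X,\mu)$, then since the unitary conjugation $A \mapsto U A U^*$ is a $*$-isomorphism preserving commutants, it suffices to show that $L^\infty(X,\mu)$ is maximal Abelian in $\cB(L^2(X,\mu))$. This is the classical fact that the algebra of multiplication operators is its own commutant; I would cite it as well known (it is the standard model for a maximal Abelian von Neumann algebra) or deduce it from the diagonalizable/decomposable duality recalled in Section~\ref{sec:Decompositions} applied to the trivial direct integral $\cH = \int_X^\oplus \C \dif\mu(p)$, where each fiber $\cH_p = \C$. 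In that degenerate direct integral every decomposable operator is automatically diagonalizable, so the decomposable operators coincide with $L^\infty(X,\mu)$ and equal their own commutant.

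For the forward direction, suppose $\mfR$ is a maximal Abelian subalgebra of $\cB(\cH)$. First I would observe that maximal Abelian forces $\mfR = \mfR'$: since $\mfR$ is Abelian we have $\mfR \subset \mfR'$, and any self-adjoint element $T$ of $\mfR'$ generates together with $\mfR$ a larger Abelian $*$-algebra, so maximality gives $T \in \mfR$; decomposing a general element of $\mfR'$ into self-adjoint parts yields $\mfR' \subset \mfR$. In particular $\mfR$ is a von Neumann algebra equal to its own commutant. Now apply Proposition~\ref{prop:Abelian-direct-integral} to the Abelian $C^*$-algebra $\mfR$ (it is in particular an Abelian $C^*$-subalgebra of $\cB(\cH)$): this produces a locally compact Polish space $X$ with Radon measure $\mu$ and a family $(\cH_p)_{p \in X}$ realizing $\cH = \int_X^\oplus \cH_p \dif\mu(p)$, in which $\overline{\mfR} = \mfR$ is exactly the algebra of diagonalizable operators and $\mfR'$ is exactly the algebra of decomposable operators.

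The crux is then to show that $\cH_p = \C$ for almost every $p$, which will identify $\cH$ with $L^2(X,\mu)$ and $\mfR$ with the multiplier algebra $L^\infty(X,\mu)$. The key point is that $\mfR = \mfR'$ translates, under the direct integral, into the statement that the diagonalizable operators coincide with the decomposable operators. But if the fiber $\cH_p$ has dimension at least two on a non-null set, one can build a decomposable operator which acts non-scalarly on those fibers (for instance, by choosing on each such fiber a fixed non-scalar partial isometry or self-adjoint operator in a measurable fashion), and such an operator is decomposable but not diagonalizable, contradicting $\mfR' \subset \mfR$. Thus the set where $\dim \cH_p \geq 2$ must be null, so almost every fiber is one-dimensional. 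I expect this measurable-selection step to be the main obstacle: one must produce a genuinely measurable field of non-scalar operators on the set $\{p : \dim \cH_p \geq 2\}$, which requires invoking the measurable structure of the direct integral (the existence of a fundamental sequence of measurable vector fields) to define, say, an operator swapping two orthonormal measurable sections. Once almost every $\cH_p = \C$, the direct integral collapses to $\int_X^\oplus \C\dif\mu(p) \cong L^2(X,\mu)$ via a canonical unitary $U$, under which the diagonalizable operators $\mfR$ become precisely the multiplication operators $L^\infty(X,\mu)$, completing the proof.
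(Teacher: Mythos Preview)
Your proposal is correct and follows the same overall architecture as the paper: both directions are handled the same way, and for the forward implication you invoke Proposition~\ref{prop:Abelian-direct-integral}, deduce $\mfR = \mfR'$ from maximality, and then argue that almost every fiber must be one-dimensional.

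The one genuine difference is at the crux step. You propose to show $\dim \cH_p = 1$ a.e.\ by a direct construction: on the set where $\dim \cH_p \geq 2$, build a measurable field of non-scalar operators to produce a decomposable operator that is not diagonalizable, contradicting $\mfR' = \mfR$. You correctly flag the measurable-selection issue as the main obstacle. The paper avoids this entirely by appealing to the uniqueness of von Neumann algebra decompositions (Proposition~\ref{prop:vonNeumann-decomposition}) together with \cite[Proposition~14.1.24]{KRvolII}: the diagonalizable algebra $\mfR$ has decomposition $(\C I_p)_{p \in X}$, while its commutant $\mfR'$ (the decomposable algebra) has decomposition $(\cB(\cH_p))_{p \in X}$; since $\mfR = \mfR'$, uniqueness forces $\C I_p = \cB(\cH_p)$ for almost every $p$, hence $\dim \cH_p = 1$. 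This is cleaner and sidesteps the measurable-selection technicality you were anticipating, at the cost of invoking a slightly heavier structural result. Your route is more hands-on and self-contained, but does require carrying out the construction you sketched using the fundamental sequence of measurable sections.
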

\begin{proof}
	For $(X,\mu)$ as in the statement, it is an elementary fact that $L^\infty(X,\mu)$ is a maximal von Neumann algebra for its action by multiplier operators on $L^2(X,\mu)$ (see \cite[Example~5.1.6]{KRvolI}). This implies the sufficiency in our statement.
	
	Conversely, for $\mfR$ maximal Abelian let us consider the direct integral decomposition of $\cH$ associated to $\mfR$ and given by $(\cH_p)_{p \in X}$ over $(X,\mu)$. The maximality property of $\mfR$ implies that $\mfR = \mfR'$. In particular, the algebras of decomposable and diagonalizable operators coincide. Since the choice $\C I_p$, for almost every $p \in X$, where $I_p$ is the identity operator on $\cH_p$, yields a decomposition of the latter, then \cite[Theorem~14.1.24]{KRvolII} implies that the choice $\cB(\cH_p)$, for almost every $p \in X$, yields a decomposition of the former. The uniqueness of the decomposition of von Neumann algebras together with $\mfR = \mfR'$ implies that $\dim \cH_p = 1$, for almost every $p \in X$. Hence, the direct integral decomposition of $\cH$ yields a unitary operator $U : \cH \rightarrow L^2(X,\mu)$ which clearly satisfies the required properties.
\end{proof}

The next result characterizes and describes norm-separable $C^*$-algebras that generate maximal Abelian von Neumann algebras.

\begin{theorem}\label{thm:maximal-vN-group}
	Let $\cH$ be a separable Hilbert space and $\mfA$ be an Abelian norm-separable $C^*$-subalgebra of $\cB(\cH)$. Then, the following conditions are equivalent.
	\begin{enumerate}
		\item $\overline{\mfA}$ is a maximal Abelian subalgebra of $\cB(\cH)$.
		\item $\mfA$ is commutant dual of itself: i.e.~we have $\overline{\mfA} = \mfA'$.
		\item $\overline{\U(\mfA)_\NN}$ is a maximal Abelian subgroup of $\U(\cH)$, where the closure is taken in $\U(\cH)$ with respect to its (strong-operator) topology.
		\item There exist a locally compact Polish space $X$ with a Radon measure $\mu$ and a unitary map $U : \cH \rightarrow L^2(X,\mu)$ for which we have $U\overline{\mfA}U^* = L^\infty(X,\mu)$, where the latter is identified with the corresponding von Neumann algebra of multiplier operators.
		\item If the direct integral decomposition of $\cH$ associated to $\overline{\mfA}$ is given by the direct integral of $(\cH_p)_{p \in X}$ over $(X,\mu)$, then $\mu(\{p \in X \mid \dim \cH_p > 1\}) = 0$.
	\end{enumerate}
	In this case, we can choose $(X,\mu)$ such that $\mu$ is positive on non-empty open subsets of $X$ and also such that, with respect to the unitary map $U$ in (4), we have 
	\[
		\overline{\mfA} = \End_{\U(\mfA)_\NN}(\cH) \simeq 
			\End_{C_\infty(X,\T)}(L^2(X,\mu)), 
	\]
	where $C_\infty(X,\T) = \{ f \in C_\infty(X) \mid f(X) \subset \T \}$ is the (Polish) group of unitary elements of the norm-separable $C^*$-algebra $C_\infty(X) = \{ f \in C(X) \mid \lim_{p \to \infty} f(p) \text{ exists} \}$, and where the unitary representation of $C_\infty(X,\T)$ on $L^2(X,\mu)$ is given by multiplier operators.
\end{theorem}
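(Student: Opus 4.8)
**The plan is to prove the five equivalences by establishing a cycle of implications, and then to verify the final displayed isomorphism using Lemma~\ref{lem:Cb-into-BL2} together with the preceding decomposition results.**

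First I would prove the equivalence of conditions (1) through (5). The implication (1) $\Leftrightarrow$ (4) is essentially Lemma~\ref{lem:maximalAbelian-vonNeumann} applied to the von Neumann algebra $\mfR = \overline{\mfA}$, so this part is immediate. For (1) $\Leftrightarrow$ (2), I observe that $\overline{\mfA}$ is a maximal Abelian subalgebra of $\cB(\cH)$ precisely when $\overline{\mfA} = (\overline{\mfA})' = \mfA'$; here I use the general fact that a von Neumann algebra is maximal Abelian iff it equals its own commutant, together with $(\overline{\mfA})' = \mfA'''= \mfA'$. For (1) $\Leftrightarrow$ (5), I would invoke Lemma~\ref{lem:maximalAbelian-vonNeumann}'s proof strategy directly: with respect to the central direct integral decomposition associated to $\overline{\mfA}$, maximality forces $\overline{\mfA} = \mfA'$, which means the diagonalizable and decomposable operators coincide; by the uniqueness of the von Neumann algebra decomposition and the argument using \cite[Theorem~14.1.24]{KRvolII}, this happens iff $\dim \cH_p = 1$ almost everywhere, which is exactly (5). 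Finally, for (2) $\Leftrightarrow$ (3), I would use Proposition~\ref{prop:KaplanskyUnitaryDensity} to note that $\overline{\U(\mfA)_\NN} = \U(\overline{\mfA})$; the subgroup $\U(\overline{\mfA})$ is maximal Abelian in $\U(\cH)$ exactly when $\overline{\mfA}$ is maximal Abelian in $\cB(\cH)$, since both algebras are generated by their unitary elements and the commutant relation transfers between the two settings.

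Next I would establish the concluding displayed isomorphism under the assumption that the equivalent conditions hold. By Remark~\ref{rmk:support-Radon}, I may replace $X$ with the support of $\mu$ so that $\mu$ is positive on non-empty open subsets. Condition (4) gives a unitary $U : \cH \rightarrow L^2(X,\mu)$ with $U\overline{\mfA}U^* = L^\infty(X,\mu)$. The first identity $\overline{\mfA} = \End_{\U(\mfA)_\NN}(\cH)$ follows from Theorem~\ref{thm:C*algebras-group}(1) combined with $\overline{\mfA} = \mfA'$ (condition (2)), since then $\End_{\U(\mfA)_\NN}(\cH) = \U(\mfA)_\NN' = \U(\overline{\mfA})' = \overline{\mfA}' \cap \text{(relevant closures)}$ collapses to $\overline{\mfA}$ by the maximality. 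Transporting via $U$, the Abelian von Neumann algebra $L^\infty(X,\mu)$ equals its own commutant, so it is precisely $\End_{C_\infty(X,\T)}(L^2(X,\mu))$ provided the multiplier action of $C_\infty(X,\T)$ generates the von Neumann algebra $L^\infty(X,\mu)$.

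The hard part will be verifying this last generation claim: that the multiplier operators $\{M_f : f \in C_\infty(X,\T)\}$ generate $L^\infty(X,\mu)$ as a von Neumann algebra. Here is where Lemma~\ref{lem:Cb-into-BL2} and the positivity of $\mu$ on open sets become essential. The $C^*$-algebra $C_\infty(X)$ embeds into $L^\infty(X,\mu)$ injectively, and since every element of a $C^*$-algebra is a linear combination of four unitaries, the unitaries $C_\infty(X,\T)$ linearly span a strong-operator dense subalgebra of $\overline{C_\infty(X)}$. I would then argue that $C_\infty(X)$ acting by multipliers is strong-operator dense in $L^\infty(X,\mu)$, so that $C_\infty(X,\T)' = L^\infty(X,\mu)' = L^\infty(X,\mu)$, giving $\End_{C_\infty(X,\T)}(L^2(X,\mu)) = L^\infty(X,\mu) \simeq \overline{\mfA}$. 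The separability of $\cH$ guarantees that $X$ is second countable, which makes $C_\infty(X)$ norm-separable and $C_\infty(X,\T)$ a Polish group, completing the verification; the main subtlety throughout is ensuring the density statements hold on the support of $\mu$ rather than on all of $X$, which is precisely what the reduction from Remark~\ref{rmk:support-Radon} secures.
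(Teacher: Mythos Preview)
Your plan matches the paper's proof essentially step for step: the same implications among (1)--(5) via Lemma~\ref{lem:maximalAbelian-vonNeumann}, Proposition~\ref{prop:KaplanskyUnitaryDensity}, and \cite[Theorem~14.1.24]{KRvolII}; the same reduction via Remark~\ref{rmk:support-Radon} to $\mu$ positive on open sets; and the same identification of the crux as showing that $C_\infty(X)$ (as multipliers) is strong-operator dense in $L^\infty(X,\mu)$. For that density---which you rightly flag as the hard part but do not argue---the paper shows that already $C_c(X)$ is strong-operator dense in $L^\infty(X,\mu)$ via Lusin's Theorem and dominated convergence, then applies Proposition~\ref{prop:KaplanskyUnitaryDensity} to pass to the unitary groups; also, your line ``$\U(\overline{\mfA})' = \overline{\mfA}' \cap \text{(relevant closures)}$'' is garbled and should simply read $\End_{\U(\mfA)_\NN}(\cH) = \U(\mfA)_\NN' = \mfA' = \overline{\mfA}$, using that unitaries span $\mfA$ and condition~(2).
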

\begin{proof}
	The equivalence of (1) and (4) follows from Lemma~\ref{lem:maximalAbelian-vonNeumann}. We also note that $\overline{\mfA}$ is maximal Abelian if and only if $\overline{\mfA} = \big(\overline{\mfA}\big)' = \mfA'$, which proves the equivalence of (1) and (2).
	
	We now assume that (1) holds and proceed to prove (3). Let $G$ be an Abelian closed subgroup of $\U(\cH)$ that contains $\overline{\U(\mfA)_\NN}$. Let $\mfR = G''$ be the von Neumann algebra generated by $G$, which is Abelian. The latter follows from Kaplansky Density Theorem and the strong-operator continuity of the product on bounded sets. Hence, we have $\mfR \supset \big(\U(\mfA)_\NN\big)'' = \overline{\mfA}$ and the maximality of $\overline{\mfA}$ implies that $\mfR = \overline{\mfA}$. We thus obtain
	\[
		G \subset \U(\mfR) = \U(\overline{\mfA}) = \overline{\U(\mfA)_\NN},
	\]
	where the second identity follows from Proposition~\ref{prop:KaplanskyUnitaryDensity}. This implies $G = \overline{\U(\mfA)_\NN}$ and shows that (3) holds.
	
	Let us now assume (3) and prove that (1) holds. Let $\mfR$ be an Abelian von Neumann algebra containing $\overline{\mfA}$, which clearly implies that $\U(\mfR)$ is an Abelian closed subgroup of $\U(\cH)$. Hence, we have 
	\[
		\overline{\U(\mfA)_\NN} = \U(\overline{\mfA}) \subset \U(\mfR),
	\]
	where we have applied Proposition~\ref{prop:KaplanskyUnitaryDensity} again. Thus, the maximality of $\overline{\U(\mfA)_\NN}$ implies that the equality in the last inclusion. We conclude that $\overline{\mfA} = \mfR$, and this proves that (1) holds.
	
	If we assume (5), then we clearly have $\cH \simeq L^2(X,\mu)$ through some unitary map and in such a way that $\overline{\mfA}$ corresponds to $L^\infty(X,\mu)$. Hence, (5) implies (4). 
	
	Let us now assume that (2) holds, and consider the direct integral decomposition of $\cH$ associated to $\overline{\mfA}$ as given by the direct integral of $(\cH_p)_{p \in X}$ over $(X,\mu)$. Then, for almost every $p \in X$ we have
	\[
		\C I_p = \big(\overline{\mfA}\big)_p = (\mfA')_p = \cB(\cH_p).
	\]
	The first identity follows from the fact that $\overline{\mfA}$ is the von Neumann algebra of diagonalizable operators. For the third identity, we apply \cite[Theorem~14.2.4]{KRvolII} using the fact that $\overline{\mfA}$ is maximal Abelian in $\mfA'$ since they are in fact equal. This proves that $\dim \cH_p = 1$ for almost every $p \in X$, and so that (2) implies (5).	
	
	Let us assume that the equivalent conditions from the statement hold and let us choose a unitary map $U : \cH \rightarrow L^\infty(X,\mu)$ so that all properties stated in (4) are satisfied. Note that by Remark~\ref{rmk:support-Radon} we can choose $X$ and $\mu$ so that the latter is positive on the non-empty open subsets of the former.
	
	We observe that the identity $\overline{\mfA} = \End_{\U(\mfA)_\NN}(\cH)$ follows from Proposition~\ref{prop:C*algebra-group-normtopo}(1) by taking $\mfB = \mfA$ in its statement, which can be done since $\mfA$ is norm-separable and commutant dual of itself.
	
	With respect to the correspondence given by $U$ we observe that Theorem~\ref{thm:C*algebras-group}(1) implies the identity in the following expression
	\[
		\overline{\mfA} \simeq L^\infty(X,\mu) = \End_{\U(L^\infty(X,\mu))}(L^2(X,\mu)),
	\]
	where we have used that the von Neumann algebra of multiplier operators given by $L^\infty(X,\mu)$ is its own commutant. On the other hand, we clearly have $\U(L^\infty(X,\mu)) = L^\infty(X,\T)$ where the latter denotes the group of functions $f \in L^\infty(X,\mu)$ whose essential range lies in $\T$.	Note that this is considered as a Polish group of multiplier operators endowed with the strong-operator topology inherited from $\U(L^2(X,\mu))$.
	
	We claim that $C_c(X)$ is strong-operator dense in $L^\infty(X,\mu)$ where both are considered as algebras of multiplier operators acting on $L^2(X,\mu)$. Let $f \in L^\infty(X,\mu)$, $g \in L^2(X,\mu)$ and $\epsilon > 0$ be given. It follows from topological and measure theoretic arguments that there exists an open subset $W \subset X$ with finite measure such~that
	\[
		\int_{X \setminus W} |fg|^2 \dif \mu < \epsilon.
	\]
	By a well known consequence of Lusin's Theorem there exists $(h_n)_n \subset C_c(W)$ that converges to $f$ almost everywhere in $W$ and such that $\|h_n\|_\infty \leq \|f\|_\infty$. We can extend the functions $h_n$ by $0$ on $X \setminus W$ to obtain a sequence of functions that belong to $C_c(X)$ and that satisfy the same properties. We will denote such sequence with the same symbol. Hence, we have $\lim_{n \to \infty} |fg - h_ng|^2 = 0$ almost everywhere in $W$ and $|fg - h_ng|^2 \leq 2\|f\|_\infty^2 |g|^2$ almost everywhere in $X$, so that the dominated convergence theorem implies that there exists $n_0$ for which
	\[
		\int_X |fg - h_n g|^2 \dif \mu = \int_{X \setminus W} |fg|^2 \dif \mu	
				+ \int_W |fg - h_ng|^2 \dif \mu < 2\epsilon,
	\]
	for every $n \geq n_0$. This proves that the multiplier algebra of $C_c(X)$ is indeed strong-operator dense in the one obtained from $L^\infty(X,\mu)$. Hence, the same strong-operator density in $L^\infty(X,\mu)$ holds for the multiplier algebra given by the $C^*$-algebra $C_\infty(X)$. By Lemma~\ref{lem:Cb-into-BL2}, the realization of the latter as a multiplier algebra is an isomorphism of $C^*$-algebras onto its image. This follows from our choice above of $\mu$ so that it is positive on non-empty open subsets of $X$. In particular, $C_\infty(X)$ can be considered as a $C^*$-subalgebra of $\cB(L^2(X,\mu))$. Since the group of unitary elements of $C_\infty(X)$ is $C_\infty(X,\T)$, it follows from Proposition~\ref{prop:KaplanskyUnitaryDensity} that the latter is dense in the topological group $L^\infty(X,\T)$ with respect to the strong-operator topology. Such density and the claims proved so far clearly imply that
	\[
		\overline{\mfA} \simeq \End_{L^\infty(X,\T)}(L^2(X,\mu))
			= \End_{C_\infty(X,\T)}(L^2(X,\mu)),
	\]
	where the identification is given by $U$. 
	
	We complete the proof by showing that $C_\infty(X)$ is norm-separable. Since $X$ is locally compact and Polish it follows that $\widehat{X}$, its one-point compactification, is metrizable (see Corollary in \cite[Page~158]{BourbakiGenTop5-10}). Hence, Stone-Weierstrass Theorem implies that $C(\widehat{X})$ is a separable $C^*$-algebra (see \cite[Remark~3.4.15]{KRvolI}). Since the $C^*$-algebras $C_\infty(X)$ and $C(\widehat{X})$ are naturally isomorphic we conclude that the former is separable as well.
\end{proof}

Theorem~\ref{thm:maximal-vN-group} allows us to state a correspondence between $C^*$-algebras and groups when both satisfy an Abelian maximality condition. In the next result, $\mathrm{MASOT}$ stands for maximal Abelian in (closure with respect to the) strong-operator topology.

\begin{corollary}\label{cor:maximal-vN-group}
	For a separable Hilbert space $\cH$, let us consider the families
	\begin{align*}
		\mathrm{MASOT}(\cB(\cH)) 
			&= \bigg\{ \mfA \subset \cB(\cH) \Big|
			\begin{matrix}
				\mfA \text{ is a norm-separable } \\
					\;C^*\text{-subalgebra and $\overline{\mfA}$ is maximal Abelian } 
			\end{matrix} \bigg\}, \\
		\mathrm{MASOT}(\U(\cH)) &= \bigg\{ G \subset \U(\cH) \Big|
			\begin{matrix}
				G \text{ is a norm-separable norm-closed } \\ 
					\text{ subgroup and $\overline{G}$ is maximal Abelian  }
			\end{matrix} \bigg\},
	\end{align*}
	and the assignments given by
	\begin{align*}
		\mathrm{MASOT}(\cB(\cH)) &\longrightarrow 	
				\mathrm{MASOT}(\U(\cH)) &
		\mathrm{MASOT}(\U(\cH)) &\longrightarrow 	
				\mathrm{MASOT}(\cB(\cH)) \\
		\mfA &\longmapsto \U(\mfA)_\NN &
		G &\longmapsto \mfA_G,
	\end{align*}
	where $\mfA_G$ is the $C^*$-algebra generated by $G$ and $\overline{G}$ is the (strong-operator) closure of $G$ in $\U(\cH)$. Then, these assignments satisfy the following identities.
	\begin{enumerate}
		\item For every $\mfA \in \mathrm{MASOT}(\cB(\cH))$: $\mfA_{\U(\mfA)_\NN} = \mfA$.
		\item For every $G \in \mathrm{MASOT}(\U(\cH))$: $\overline{G} = \overline{\U(\mfA_G)_\NN}$.
	\end{enumerate}
	In other words, up to strong-operator closure in $\U(\cH)$ for $\mathrm{MASOT}(\U(\cH))$, the above correspondences are inverses of each other.
\end{corollary}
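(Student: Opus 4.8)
The plan is to verify first that the two assignments land in the claimed families, and then to establish the two identities; every step will reduce to the equivalences of Theorem~\ref{thm:maximal-vN-group}, the Kaplansky-type density of Proposition~\ref{prop:KaplanskyUnitaryDensity}, and the elementary fact (recalled in the conventions) that every element of a $C^*$-algebra is a linear combination of unitaries in it. For the first map $\mfA \mapsto \U(\mfA)_\NN$, I would invoke Proposition~\ref{prop:U(A)-topgroup-C*} to see that $\U(\mfA)_\NN$ is a norm-closed, norm-separable (Polish) subgroup of $\U(\cH)$ whenever $\mfA$ is norm-separable, and the equivalence (1)$\Leftrightarrow$(3) of Theorem~\ref{thm:maximal-vN-group} to see that $\overline{\U(\mfA)_\NN}$ is maximal Abelian. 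Hence $\U(\mfA)_\NN \in \mathrm{MASOT}(\U(\cH))$, so the first assignment is well defined.

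For identity (1), I would note that $\mfA_{\U(\mfA)_\NN}$, the $C^*$-algebra generated by $\U(\mfA)_\NN$, is contained in $\mfA$ because $\mfA$ is a norm-closed $*$-algebra already containing these unitaries. Conversely, since every element of $\mfA$ is a linear combination of (four) unitaries of $\mfA$, the linear span of $\U(\mfA)_\NN$ is all of $\mfA$, and passing to the norm closure yields $\mfA \subset \mfA_{\U(\mfA)_\NN}$. Thus $\mfA_{\U(\mfA)_\NN} = \mfA$.

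For the second assignment and identity (2), fix $G \in \mathrm{MASOT}(\U(\cH))$ and let $\mfA_G$ be the $C^*$-algebra generated by $G$. As in the proof of Corollary~\ref{cor:group-decomposition}, $\mfA_G$ is the norm closure of the linear span of $G$, so it is norm-separable; it is Abelian because the commuting unitaries of $G$ (here I use that $\overline{G}$, hence $G$, is Abelian) generate it, and then $\overline{\mfA_G}$ is Abelian as well by Kaplansky density together with strong-operator continuity of multiplication on bounded sets, exactly as argued in the proof of Theorem~\ref{thm:maximal-vN-group}. Proposition~\ref{prop:KaplanskyUnitaryDensity} gives $\overline{\U(\mfA_G)_\NN} = \U(\overline{\mfA_G})$, and from $G \subset \U(\mfA_G)_\NN$ I obtain $\overline{G} \subset \U(\overline{\mfA_G})$. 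Since $\U(\overline{\mfA_G})$ is an Abelian closed subgroup of $\U(\cH)$ containing $\overline{G}$, the maximality of $\overline{G}$ forces
\[
	\overline{G} = \U(\overline{\mfA_G}) = \overline{\U(\mfA_G)_\NN},
\]
which is identity (2). The same equality shows $\overline{\U(\mfA_G)_\NN}$ is maximal Abelian, so by the equivalence (3)$\Leftrightarrow$(1) of Theorem~\ref{thm:maximal-vN-group} the algebra $\overline{\mfA_G}$ is maximal Abelian and $\mfA_G \in \mathrm{MASOT}(\cB(\cH))$, confirming that the second assignment is well defined.

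The only genuinely non-formal point I anticipate is the reverse inclusion $\U(\overline{\mfA_G}) \subset \overline{G}$, namely that the unitaries arising in the von Neumann completion are already strong-operator approximable by elements of $G$. I expect this to be the main obstacle, and the plan is to secure it not through a direct approximation argument but \emph{for free} from the maximality hypothesis on $\overline{G}$, once $\U(\overline{\mfA_G})$ has been recognized as an Abelian closed overgroup of $\overline{G}$. Everything else is bookkeeping with the cited propositions and theorem.
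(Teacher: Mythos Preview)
Your proposal is correct and follows essentially the same approach as the paper's proof: both hinge on Proposition~\ref{prop:KaplanskyUnitaryDensity} to identify $\overline{\U(\mfA_G)_\NN}$ with $\U(\overline{\mfA_G})$, and both obtain the key reverse inclusion $\U(\overline{\mfA_G}) \subset \overline{G}$ for free from the maximality of $\overline{G}$ rather than by any direct approximation. The only cosmetic difference is the order of the arguments for the second assignment: the paper first shows $\overline{\mfA_G}$ is maximal Abelian by a direct argument (taking an arbitrary Abelian von Neumann algebra $\mfR \supset \overline{\mfA_G}$, using maximality of $\overline{G}$ to force $\overline{G} = \U(\mfR)$, and then computing $\mfR = \overline{G}'' = G'' = \overline{\mfA_G}$), and afterwards derives identity~(2); you instead establish identity~(2) first and then feed it into the equivalence (3)$\Leftrightarrow$(1) of Theorem~\ref{thm:maximal-vN-group} to obtain maximality of $\overline{\mfA_G}$, which is equally valid.
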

\begin{proof}
	It follows from Theorem~\ref{thm:maximal-vN-group} that the first correspondence is well defined. Furthermore, for every $\mfA \in \mathrm{MASOT}(\cB(\cH))$ we clearly have $\mfA_{\U(\mfA)_\NN} = \mfA$. In particular, property (1) holds.
	
	On the other hand, if $G \in \mathrm{MASOT}(\U(\cH))$ and $\mfR$ is an Abelian von Neumann algebra containing $\overline{\mfA_G} = G''$, then we have
	\[
		\overline{G} \subset \overline{\U(\mfA_G)_\NN} = \U(\overline{\mfA_G})
			\subset \U(\mfR),
	\]
	where we have used Proposition~\ref{prop:KaplanskyUnitaryDensity}. The maximality property of $\overline{G}$ implies that $\overline{G} = \U(\mfR)$, from which we obtain
	\[
		\mfR = \U(\mfR)'' = \overline{G}'' = G'' = \overline{\mfA_G},
	\]
	thus showing that $\overline{\mfA_G}$ is maximal Abelian. Hence, the second assignment is well defined.
	
	Finally, for $G \in \mathrm{MASOT}(\U(\cH))$ the group $\overline{\U(\mfA_G)_\NN}$ is Abelian and since it contains $G$ we conclude that $\overline{G} = \overline{\U(\mfA_G)_\NN}$. This proves (2) and completes the proof.
\end{proof}

One can apply the results obtained so far to formulate a description, using representation theory of Polish groups, of Abelian $C^*$-subalgebras of a given norm-separable $C^*$-algebra. We start with the following result, which adds to Theorem~\ref{thm:C*algebra-normseparable-intdecomp-group} and Corollary~\ref{cor:normseparable-Abelian-intdecomp} a condition on the Polish group involved in terms of the ambient $C^*$-algebra.

\begin{corollary}
\label{cor:Abelian-subalgebras-cT}
	Let $\cH$ be a separable Hilbert space and let $\cT$ be a norm-separable $C^*$-subalgebra of $\cB(\cH)$. Then, for every Abelian $C^*$-subalgebra $\mfA$ of $\cT$ the group $G = \U(\mfB)_\NN$ (where $\mfB$ is a norm-separable $C^*$-algebra commutant dual of $\mfA$) from Corollary~\ref{cor:normseparable-Abelian-intdecomp} further satisfies $\End_G(\cH) \subset \overline{\cT}$ or, equivalently,
	$G'' \supset \cT'$.
\end{corollary}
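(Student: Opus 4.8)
The plan is to recognize that the quantity $\End_G(\cH)$ has already been identified in the earlier results, so that the claimed inclusion becomes essentially automatic. First I would invoke Proposition~\ref{prop:C*algebra-group-normtopo}(1). Since $\mfB$ is a norm-separable $C^*$-algebra that is commutant dual of $\mfA$, Definition~\ref{def:commutant-dual} gives $\overline{\mfB} = \mfA'$, so $\mfB$ is a strong-operator dense norm-separable $C^*$-subalgebra of $\mfA'$, which is exactly the hypothesis of that proposition. Consequently $\End_G(\cH) = \End_{\U(\mfB)_\NN}(\cH) = \overline{\mfA}$. The problem therefore reduces to proving the single inclusion $\overline{\mfA} \subset \overline{\cT}$.

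This reduced inclusion I would obtain purely by monotonicity of the strong-operator closure. By hypothesis $\mfA$ is a $C^*$-subalgebra of $\cT$, so $\mfA \subset \cT \subset \overline{\cT}$. Since $\overline{\cT} = \cT''$ is a von Neumann algebra, it is strong-operator closed, and hence $\overline{\mfA} = \mfA'' \subset \overline{\cT}$. Combined with the previous paragraph this yields $\End_G(\cH) = \overline{\mfA} \subset \overline{\cT}$, which is the first formulation of the claim.

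To pass to the equivalent formulation $G'' \supset \cT'$, I would simply take commutants. Recall that $\End_G(\cH) = G'$ is the commutant of $G$, and that $\overline{\cT} = \cT''$ together with $\cT''' = \cT'$ (valid as $\cT$ is self-adjoint). Taking commutants reverses inclusions, so $G' \subset \cT''$ is equivalent to $\cT' = (\cT'')' \subset (G')' = G''$, which is precisely $G'' \supset \cT'$. I do not expect any genuine obstacle here: the entire content of the statement is carried by the identification $\End_G(\cH) = \overline{\mfA}$, and the only point requiring a moment of care is verifying that commutant duality of $\mfB$ with $\mfA$ supplies exactly the density hypothesis needed to apply Proposition~\ref{prop:C*algebra-group-normtopo}.
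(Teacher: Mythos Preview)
Your proposal is correct and follows essentially the same approach as the paper's own proof, which consists of the single line ``By our assumptions $\End_G(\cH) = \overline{\mfA} \subset \overline{\cT}$, which is equivalent to $G'' = \overline{\mfA}' = \mfA' \supset \cT'$.'' You have simply unpacked the implicit identifications (via Proposition~\ref{prop:C*algebra-group-normtopo}(1) and monotonicity of closure) that the paper leaves to the reader.
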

\begin{proof}
	By our assumptions $\End_G(\cH) = \overline{\mfA} \subset \overline{\cT}$, which is equivalent to $G'' = \overline{\mfA}' = \mfA' \supset \cT'$.
\end{proof}

The next result reduces the construction of Abelian subalgebras of a given norm-separable operator $C^*$-algebra to Polish subgroups of $\U(\cH)_\NN$ with multiplicity-free representation on $\cH$. In this result, the notation $\mathrm{MultFree}$ stands for multiplicity-free Polish groups.

\begin{theorem}\label{thm:Abelian-subalg-vs-multiplicity-free}
	Let $\cH$ be a separable Hilbert space and let $\cT$ be a norm-separable $C^*$-subalgebra of $\cB(\cH)$. Let us also consider the family
	\[
		\mathrm{MultFree}(\cT) =
			\Bigg\{ G \subset \U(\cH)_\NN \;\bigg|
				\begin{matrix}
					\;\End_G(\cH) \subset \overline{\cT} \\
					\;G \text{ is a closed subgroup, } \\
					\text{ $G \rightarrow \U(\cH)$ is 
						multiplicity-free }
				\end{matrix} \Bigg\},
	\]
	Then, the family of Abelian $C^*$-subalgebras of $\cT$ can be described as follows.
	\begin{itemize}
		\item Let $\mfA \subset \cT$ be a $C^*$-subalgebra. Then, $\mfA$ is Abelian if and only if there exists a group $G \in \mathrm{MultFree}(\cT)$ such that $\mfA \subset \End_G(\cH) \cap \cT$. In particular, if $\mfA$ is a maximal Abelian $C^*$-subalgebra of $\cT$, then there exits $G \in \mathrm{MultFree}(\cT)$ such that $\mfA = \End_G(\cH) \cap \cT$.
	\end{itemize}
\end{theorem}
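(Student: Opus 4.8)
The plan is to establish the stated equivalence in both directions and then deduce the maximal case, using the commutant-dual machinery together with Proposition~\ref{prop:C*algebra-group-normtopo}(1) to identify the intertwining algebra of the group I build.

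First I would dispatch the easy implication. If there is a group $G \in \mathrm{MultFree}(\cT)$ with $\mfA \subset \End_G(\cH) \cap \cT$, then by Definition~\ref{def:multifree} the multiplicity-free hypothesis on $G$ says exactly that $\End_G(\cH)$ is Abelian; since $\mfA$ is a $C^*$-subalgebra of the Abelian algebra $\End_G(\cH)$, it is itself Abelian.

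For the converse, suppose $\mfA$ is an Abelian $C^*$-subalgebra of $\cT$. Being a subset of the norm-separable algebra $\cT$, $\mfA$ is itself norm-separable, so Remark~\ref{rmk:commutant-dual-norm-separable} furnishes a norm-separable $C^*$-subalgebra $\mfB$ of $\cB(\cH)$ that is commutant dual of $\mfA$, i.e.\ $\overline{\mfB} = \mfA'$. I set $G = \U(\mfB)_\NN$; by Proposition~\ref{prop:U(A)-topgroup-C*} this is a norm-closed subgroup of $\U(\cH)_\NN$. Since $\mfB$ is strong-operator dense in $\mfA'$, Proposition~\ref{prop:C*algebra-group-normtopo}(1) gives the crucial identification $\End_G(\cH) = \overline{\mfA}$. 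This single identity delivers the three remaining requirements at once: as $\mfA$ is Abelian so is $\overline{\mfA}$, whence $\End_G(\cH)$ is Abelian and $G$ is multiplicity-free; from $\mfA \subset \cT$ and monotonicity of the strong-operator closure one gets $\End_G(\cH) = \overline{\mfA} \subset \overline{\cT}$, so $G \in \mathrm{MultFree}(\cT)$; and finally $\mfA \subset \overline{\mfA} = \End_G(\cH)$ together with $\mfA \subset \cT$ yields $\mfA \subset \End_G(\cH) \cap \cT$.

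For the maximal case I would reuse the very same $G$. The set $\End_G(\cH) \cap \cT = \overline{\mfA} \cap \cT$ is the intersection of a von Neumann algebra with a $C^*$-algebra, hence a unital $C^*$-subalgebra of $\cT$; it is Abelian because it sits inside $\overline{\mfA}$, and it contains $\mfA$. Maximality of $\mfA$ among Abelian $C^*$-subalgebras of $\cT$ then forces $\mfA = \overline{\mfA} \cap \cT = \End_G(\cH) \cap \cT$. The argument is essentially bookkeeping once $\End_G(\cH) = \overline{\mfA}$ is in hand; the only point requiring care is confirming that the constructed $G$ truly lies in $\mathrm{MultFree}(\cT)$, which is precisely where the hypotheses $\mfA \subset \cT$ and $\mfA$ Abelian are spent, and where the identification from Proposition~\ref{prop:C*algebra-group-normtopo}(1) does all the work.
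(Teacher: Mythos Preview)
Your proof is correct and follows essentially the same route as the paper: the paper cites Corollary~\ref{cor:Abelian-subalgebras-cT} for the hard direction, but unwinding that corollary gives exactly your construction of $G = \U(\mfB)_\NN$ with $\mfB$ a norm-separable commutant dual of $\mfA$ and the key identity $\End_G(\cH) = \overline{\mfA}$ from Proposition~\ref{prop:C*algebra-group-normtopo}(1). Your treatment of the maximal case spells out what the paper leaves as ``now obvious''.
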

\begin{proof}
	Let us consider $G$ a Polish group that belongs to the family $\mathrm{MultFree}(\cT)$. By definition of the latter, $\End_G(\cH)$ is Abelian and so every $C^*$-subalgebra of the $C^*$-algebra $\End_G(\cH) \cap \cT$ is clearly an Abelian $C^*$-subalgebra of $\cT$.
	
	Conversely, let $\mfA \subset \cT$ be an Abelian $C^*$-subalgebra. Then, Corollary~\ref{cor:Abelian-subalgebras-cT} yields a Polish closed subgroup $G \subset \U(\cH)_\NN$ satisfying the required conditions. The claim on maximal Abelian $C^*$-subalgebras is now obvious.
\end{proof}

\begin{remark}\label{rmk:Abelian-subalg-vs-multiplicity-free}
	Theorem~\ref{thm:Abelian-subalg-vs-multiplicity-free} provides a tool to find Abelian $C^*$-subalgebras in a given ambient norm-separable $C^*$-algebra $\cT$. The key is to look for Polish closed subgroups $G \subset \U(\cH)_\NN$, with multiplicity-free representation on $\cH$, and consider the von Neumann algebra $\End_G(\cH)$ as well as its intersection with $\cT$. This is very much obvious from the constructions so far. The main simplification obtained from Theorem~\ref{thm:Abelian-subalg-vs-multiplicity-free} and Corollary~\ref{cor:Abelian-subalgebras-cT} is given by the equivalent conditions $G'' \supset \cT'$ and $\End_G(\cH) \subset \overline{\cT}$. These say that for smaller $\cT$ we have to consider less Polish groups $G$. Conversely, for larger $\cT$ we have to consider more Polish groups $G$. In particular, for $\cT$ strong-operator dense in $\cB(\cH)$ we will have, in general, to consider all closed subgroups of $\U(\cH)_\NN$.
\end{remark}

\subsection{Compact groups vs Polish groups}
Let us consider a compact group $K$ with a unitary representation on a separable Hilbert space $\cH$. Hence, we have a so-called isotypic decomposition associated to the $K$-action written as
\[
	\cH = \bigoplus_{j \in J} \cH_j,
\]
and characterized by the following properties.
\begin{enumerate}
	\item The Hilbert direct sum is $K$-invariant, in other words, for each $j \in J$, the closed subspace $\cH_j$ is $K$-invariant.
	\item For every $j \in J$, $\cH_j \not= 0$ and it is the sum of all irreducible $K$-submodules of $\cH$ isomorphic to some fixed $K$-module $V_j$.
	\item For $j_1 \not= j_2$ in $J$, the $K$-modules $V_{j_1}$ and $V_{j_2}$ are non-isomorphic.
\end{enumerate}
Since $\cH$ is assumed to be separable, it follows that $J$ is countable. We also note that, with this setup, the unitary representation of $K$ on $\cH$ is multiplicity-free if and only if $\cH_j$ is an irreducible $K$-module for every $j \in J$. Another standard property is that of finite multiplicity that, for the current setup and notation, means that $\cH_j$ can be written a finite sum of irreducible $K$-modules isomorphic to $V_j$, for all $j \in J$.

Let us denote by $\rho : K \rightarrow \U(\cH)$ the unitary representation considered and by $\rho_j : K \rightarrow \U(\cH_j)$, where $j \in J$, the corresponding representation on $\cH_j$. We recall that every irreducible representation of a compact group is finite dimensional. For this reason, the representation of $K$ has finite multiplicity if and only if $\dim \cH_j < \infty$, for all $j \in J$. For simplicity, we will assume that the representation $\rho$ has finite multiplicity.

The von Neumann algebra of intertwining operators for the unitary representation $\rho$ satisfies
\[
	\End_K(\cH) = \bigoplus_{j \in J} \End_K(\cH_j)
			\simeq \bigoplus_{j \in J} M_{m_j}(\C),
\]
where, with the above notation, $\cH_j$ is the direct sum of $m_j$ copies of $V_j$, and $M_{m_j}(\C)$ denotes the $C^*$-algebra of $m_j \times m_j$ complex matrices. This fact is an easy consequence of Schur's Lemma.

From the previous constructions, the main goal is to consider strong-operator dense norm-separable $C^*$-subalgebras $\mfA$ in $\End_K(\cH)$, but using some Polish closed subgroup $G \subset \U(\cH)_\NN$. The latter, so that we can apply the machinery built up to this point. Hence, we are looking for such a subgroup $G$ so that  $\End_K(\cH) = \End_G(\cH)$. We recall that a choice of such a group was $G = \U(\mfB)_\NN$, with $\mfB$ a norm-separable $C^*$-algebra commutant dual of $\mfA$. This amounts to finding such a $\mfB$ strong-operator dense in
\[
	\mfA' = \overline{\mfA}' = \rho(K)'',
\]
the von Neumann algebra generated by $\rho(K)$. This image $\rho(K)$ consists of operators of the form
\[
	\bigoplus_{j \in J} \rho_j(g),
\]
where $g \in K$. However, from the elementary properties of von Neumann algebras, we have
\[
	\rho(K)'' = \End_K(\cH)' 
		= \bigoplus_{j \in J} \End_K(\cH_j)'
		= \bigoplus_{j \in J} \rho_j(K)'',
\]
which is much larger and not norm-separable in general. However, we can explicitly exhibit a strong-operator dense norm-separable $C^*$-subalgebra in the next result. Recall that $J$ is countable and, with our current assumptions, if it is finite, then $\cH$ is finite dimensional. Since the latter case is rather trivial, we will assume from now on that $J = \N$. We will also denote by $I_j$ the identity operator on $\cH_j$, for every $j \in \N$.

\begin{lemma}\label{lem:mfB-for-K''}
	With the previous notation and assumptions, the space defined by
	\[
		\mfB = \bigg\{
				T = \bigoplus_{j = 0}^\infty T_j 
					\in \bigoplus_{j = 0}^\infty \rho_j(K)''
					\;\Big|\; 
					\lim_{j \mapsto \infty} \|T_j - I_j\| = 0
			\bigg\}
	\]
	is a strong-operator dense norm-separable $C^*$-subalgebra of $\rho(K)''$.
\end{lemma}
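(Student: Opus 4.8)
The plan is to first describe the fibers $\rho_j(K)''$ explicitly, then recognize $\mfB$ as the unitization of a $c_0$-type ideal inside $\rho(K)''$, and finally dispatch norm-separability and strong-operator density by elementary truncation arguments.

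First I would record the structure of the summands. Under the standing finite-multiplicity hypothesis each $\cH_j$ is $K$-isomorphic to $V_j \otimes \C^{m_j}$ with $K$ acting only on the first tensor factor, so that $\rho_j(g) = \sigma_j(g) \otimes I_{m_j}$ for the irreducible representation $\sigma_j$ on $V_j$. Schur's Lemma then gives $\rho_j(K)' = \C I_{V_j} \otimes M_{m_j}(\C) = \End_K(\cH_j)$ and hence $\rho_j(K)'' = \End(V_j) \otimes \C I_{m_j} \simeq M_{d_j}(\C)$, where $d_j = \dim V_j < \infty$. In particular every fiber $\rho_j(K)''$ is a finite-dimensional unital $C^*$-algebra with unit $I_j$, and, as recorded just before the statement, $\rho(K)'' = \bigoplus_{j} \rho_j(K)''$ is the full (bounded-family) $\ell^\infty$-direct sum.

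Next I would introduce the $c_0$-direct sum $\mathfrak{K} = \{ \bigoplus_j T_j \in \rho(K)'' \mid \lim_j \|T_j\| = 0 \}$, which is a norm-closed two-sided $*$-ideal of $\rho(K)''$, and observe that the defining condition places each $T = \bigoplus_j T_j$ in the unitization $\C I + \mathfrak{K}$, since its fibers converge in norm to a scalar multiple of $I_j$; thus $\mfB = \C I + \mathfrak{K}$. This is manifestly a unital $*$-subalgebra: it is closed under adjoints because $\|T_j^* - I_j\| = \|T_j - I_j\|$, and closed under products because $(\lambda I + K)(\mu I + L) = \lambda\mu\, I + (\lambda L + \mu K + KL)$ with $\lambda L + \mu K + KL \in \mathfrak{K}$, using $\sup_j \|K_j\| < \infty$. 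It is norm-closed since $\mathfrak{K}$ is norm-closed and $\C I$ is finite-dimensional, so $\C I + \mathfrak{K}$ is closed; hence $\mfB$ is a $C^*$-subalgebra of $\rho(K)''$.

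For separability I would note that each fiber $\rho_j(K)'' \simeq M_{d_j}(\C)$ is separable, so that $\mathfrak{K}$ is separable (a countable dense subset is furnished by the finitely supported families whose nonzero components run over fixed countable dense subsets of the fibers), and adjoining $\C I$ keeps it separable. Finally, for strong-operator density, given $S = \bigoplus_j S_j \in \rho(K)''$, finitely many vectors $x^{(1)}, \dots, x^{(r)}$ and $\epsilon > 0$, I would write $x^{(i)} = \bigoplus_j x^{(i)}_j$, choose $N$ with $\sum_{j>N}\|x^{(i)}_j\|^2$ small for all $i$, and set $T_j = S_j$ for $j \le N$ and $T_j = I_j$ for $j > N$; then $T \in \mfB$, since its tail equals $I_j$, while $\|(S-T)x^{(i)}\|^2 = \sum_{j>N}\|(S_j - I_j)x^{(i)}_j\|^2 \le (\sup_j\|S_j\| + 1)^2 \sum_{j>N}\|x^{(i)}_j\|^2 < \epsilon$. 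I expect the only real subtlety to be conceptual rather than computational: the definition must simultaneously force membership to be a norm-small perturbation of a scalar (for the algebra structure and separability) and yet permit modifying only finitely many fibers when approximating in the strong-operator topology. Both demands are met by the single requirement that the tail converge in norm to the identity, and keeping these two roles straight is the main thing to get right.
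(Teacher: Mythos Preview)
Your identification $\mfB = \C I + \mathfrak{K}$ does not match the definition as written: the condition $\lim_j \|T_j - I_j\| = 0$ picks out the affine coset $I + \mathfrak{K}$, not the linear span $\C I + \mathfrak{K}$ (for instance $2I$ fails it). Since $I + \mathfrak{K}$ is not closed under addition, this is really a defect in the statement of the lemma itself, and your unitization reading is the natural repair; just be explicit that you are amending the definition rather than claiming a literal equality. The paper's own proof glosses over the same point, asserting without justification that $\mfB$ is a $*$-subalgebra.

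With that caveat your argument is correct, and it differs from the paper's chiefly in the density step. The paper observes that $\mfB'' = \bigoplus_j \rho_j(K)'' = \rho(K)''$ and invokes the Double Commutant Theorem; you instead give an explicit finite-truncation approximation in the strong-operator seminorms. Your route is more elementary and self-contained, the paper's is shorter but leans on the ambient von Neumann theory. For norm-closure the paper uses a direct triangle-inequality estimate on the components, while you obtain it structurally from the fact that $\mathfrak{K}$ is closed and $\C I$ is finite-dimensional. The separability arguments are essentially identical in both.
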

\begin{proof}
	First, it is easy to see that $\mfB$ is a $*$-subalgebra of $\rho(K)''$. Let us consider a sequence $(T_{(k)})_{k \in \N} \subset \mfB$ with elements of the form
	\[
		T_{(k)} = \bigoplus_{j = 0}^\infty T_{k,j}
	\]
	which is norm-convergent to $T \in \cB(\cH)$. In particular, we necessarily have
	\[
		T = \bigoplus_{j = 0}^\infty T_j
	\]
	with $T_j \in \rho_j(K)''$, for all $j \in \N$. By considering the estimate
	\[
		\|T_j - I_j\| \leq \|T_j - T_{k,j}\| + \|T_{k,j} - I_j\|
			\leq \|T - T_{(k)}\| + \|T_{k,j} - I_j\|,
	\]
	we conclude that $\lim_{j \mapsto \infty} \|T_j - I_j\| = 0$. Hence, $\mfB$ is a $C^*$-subalgebra. 
	
	On the other hand, it is easy to see that
	\[
		\mfB'' = \bigoplus_{j = 0}^\infty \rho_j(K)'' = \rho(K)'',
	\]
	and so the strong-operator density of $\mfB$ in $\rho(K)''$ is a consequence of the Double Commutant Theorem.
	
	Finally, the norm-separability of $\mfB$ follows by considering the subset
	\[
		\bigg\{
			T = \bigoplus_{j = 0}^\infty T_j 
			\in \mfB
			\;\Big|\; 
				\begin{matrix}
					\text{ $T_j \not= I_j$ for finitely many $j$ } \\
					\text{ $T_j \in S_j$ for all $j$ }
				\end{matrix}
			\bigg\}
	\]
	where $S_j \subset \rho_j(K)''$ is a countable dense subset for all $j \in \N$. Recall that $\rho_j(K)'' \subset M_{m_j}(\C)$, for all $j$. It is straightforward to prove that the latter set is norm-dense in $\mfB$.
\end{proof}

As a consequence, we have a description of $\End_K(\cH)$ in terms of the unitary representation of a Polish closed subgroup of $\U(\cH)$.

\begin{theorem}\label{thm:compact-vs-Polish}
	Let $\rho : K \rightarrow \U(\cH)$ be a unitary representation of a compact group $K$ on the separable Hilbert space $\cH$. Let us assume that $\rho$ has an isotypic decomposition with finite multiplicity
	\[
		\cH = \bigoplus_{j = 0}^\infty \cH_j,
	\]
	and let $\rho_j : K \rightarrow \U(\cH_j)$, for $j \in \N$, be the corresponding components of $\rho$. Then, for $\mfB$ as in Lemma~\ref{lem:mfB-for-K''}, the group $G = \U(\mfB)_\NN$ is a Polish closed subgroup of $\U(\cH)_\NN$ such that $\End_K(\cH) = \End_G(\cH)$. Furthermore, if $\rho$ is multiplicity-free, then
	\[
		G = \bigg\{
			T = \bigoplus_{j = 0}^\infty T_j 
				\in \bigoplus_{j = 0}^\infty \U(n_j)
				\;\Big|\; 
				\lim_{j \mapsto \infty} \|T_j - I_j\| = 0
			\bigg\},
	\]
	where $n_j = \dim \cH_j$, for all $j \in \N$.
\end{theorem}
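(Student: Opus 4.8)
The plan is to obtain the first assertion as a direct application of the machinery already in place, and then to compute the unitary group explicitly once the multiplicity-free hypothesis is imposed.

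For the first assertion I would start from Lemma~\ref{lem:mfB-for-K''}, which tells us that $\mfB$ is a norm-separable $C^*$-subalgebra of $\cB(\cH)$. Proposition~\ref{prop:U(A)-topgroup-C*} then applies verbatim and gives that $G = \U(\mfB)_\NN$ is a topological group which is Polish and norm-closed in $\cB(\cH)$; in particular it is a closed subgroup of $\U(\cH)_\NN$. To identify its intertwining algebra I would set $\mfA = \End_K(\cH) = \rho(K)'$, which is a von Neumann algebra and hence its own strong-operator closure. Since Lemma~\ref{lem:mfB-for-K''} asserts that $\mfB$ is strong-operator dense in $\rho(K)'' = \mfA'$, the norm-separable $C^*$-algebra $\mfB$ satisfies the hypothesis of Proposition~\ref{prop:C*algebra-group-normtopo}(1) relative to $\mfA$. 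That proposition yields $\overline{\mfA} = \End_{\U(\mfB)_\NN}(\cH) = \End_G(\cH)$, and because $\overline{\mfA} = \mfA = \End_K(\cH)$ we conclude $\End_K(\cH) = \End_G(\cH)$, as required.

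For the multiplicity-free case I would invoke the characterization recalled earlier in this subsection: $\rho$ is multiplicity-free precisely when each $\cH_j$ is an irreducible $K$-module. Schur's Lemma then gives $\rho_j(K)' = \C I_j$ on each block, whence $\rho_j(K)'' = \cB(\cH_j)$; and since $\rho$ has finite multiplicity, $\cH_j$ is finite-dimensional, so $\cB(\cH_j) \simeq M_{n_j}(\C)$ with $n_j = \dim \cH_j$. Substituting $\rho_j(K)'' = \cB(\cH_j)$ into the definition of $\mfB$ from Lemma~\ref{lem:mfB-for-K''} identifies $\mfB$ with the block-diagonal operators $\bigoplus_j T_j$ having $T_j \in \cB(\cH_j)$ and $\lim_{j \to \infty} \|T_j - I_j\| = 0$.

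Finally I would pass to unitaries. A block-diagonal operator $T = \bigoplus_j T_j$ satisfies $T^*T = \bigoplus_j T_j^* T_j$ and $TT^* = \bigoplus_j T_j T_j^*$, so $T$ is unitary if and only if every $T_j$ is unitary in $\cB(\cH_j)$; the defining norm condition $\lim_{j \to \infty} \|T_j - I_j\| = 0$ transfers unchanged. Since the unitary group of $\cB(\cH_j) \simeq M_{n_j}(\C)$ is $\U(n_j)$, this produces exactly the stated description of $G = \U(\mfB)_\NN$. I expect the only genuinely delicate point to be this last block-diagonal bookkeeping, namely verifying that unitarity of $T$ is equivalent to fiberwise unitarity and that the norm-limit constraint is preserved under this identification; everything else reduces to the earlier propositions together with Schur's Lemma and is essentially formal.
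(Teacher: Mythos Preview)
Your proposal is correct and follows essentially the same route as the paper: both invoke Lemma~\ref{lem:mfB-for-K''} together with Proposition~\ref{prop:C*algebra-group-normtopo}(1) (applied with $\mfA = \End_K(\cH)$) for the first part, and Schur's Lemma plus the identification $\rho_j(K)'' = \cB(\cH_j)\simeq M_{n_j}(\C)$ for the multiplicity-free description of $G$. Your write-up is simply more explicit about the block-diagonal unitary computation and the appeal to Proposition~\ref{prop:U(A)-topgroup-C*}, which the paper leaves implicit.
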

\begin{proof}
	That $G$ is a Polish closed subgroup of $\U(\cH)_\NN$ follows from Lemma~\ref{lem:mfB-for-K''}. We also conclude from the latter that $\overline{\mfB} = \End_K(\cH)'$ and so Proposition~\ref{prop:C*algebra-group-normtopo} implies that $\End_K(\cH) = \End_G(\cH)$.
	
	Let us now assume that $\rho$ is multiplicity-free and let us fix $j \in \N$. Then, $\rho_j$ is irreducible, and so we have $\rho_j(K)' = \C I_j$ which implies $\rho_j(K)'' = M_{n_j}(\C)$. The unitary elements of the latter are precisely $\U(n_j)$. This yields the required description for the group $G$.
\end{proof}

\begin{remark}\label{rmk:compact-vs-Polish}
	As it is well known, there are plenty of compact groups $K$ admitting multiplicity-free unitary representations $\rho$ on infinite dimensional separable Hilbert spaces $\cH$. If we apply the previous discussion to such examples, we immediately observe that the Polish group $G$ obtained from Theorem~\ref{thm:compact-vs-Polish} is quite larger than the image $\rho(K)$. Nevertheless, there is something useful about the former. The operators that belong to the group $G$ allow to consider all the terms in the isotypic decomposition independently, instead of the way the components of the operators in $\rho(K)$ are tied together. This can be done while still having at our disposal a direct sum decomposition for $G$.
	
	On the other hand, it is possible to make a smaller choice of the Polish group as the following result shows. Its proof uses the same arguments as above. We observe that the group $G$ obtained carries all the representations of $K$ involved but in an independent way.
\end{remark}

\begin{corollary}\label{cor:compact-vs-Polish}
	With the notation from Theorem~\ref{thm:compact-vs-Polish}, let us assume that $\rho$ is multiplicity-free. Then, the group
	\[
		\widehat{K} = \bigg\{
			T = \bigoplus_{j = 0}^\infty T_j 
				\in \bigoplus_{j = 0}^\infty \rho_j(K)
				\;\Big|\; 
				\lim_{j \mapsto \infty} \|T_j - I_j\| = 0
			\bigg\},
	\]
	is a Polish closed subgroup of $\U(\cH)_\NN$ that satisfies $\End_{\widehat{K}}(\cH) = \End_K(\cH)$.
\end{corollary}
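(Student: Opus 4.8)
The plan is to prove the two assertions separately: that $\widehat{K}$ is a Polish closed subgroup of $\U(\cH)_\NN$, and that $\End_{\widehat{K}}(\cH) = \End_K(\cH)$. For the first, I would begin by checking that $\widehat{K}$ is a subgroup: for $T = \bigoplus_j T_j$ and $S = \bigoplus_j S_j$ in $\widehat{K}$ one has $(TS)_j = T_jS_j \in \rho_j(K)$ with $\|T_jS_j - I_j\| \le \|T_j - I_j\| + \|S_j - I_j\| \to 0$, while $\|T_j^{-1} - I_j\| = \|I_j - T_j\| \to 0$; since the group operations are norm-continuous on $\U(\cH)_\NN$, this makes $\widehat{K}$ a topological group. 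Norm-closedness follows from exactly the estimate used in Lemma~\ref{lem:mfB-for-K''}: if $T^{(k)} \to T$ in norm with $T^{(k)} \in \widehat{K}$, then each block converges, $T_j \in \rho_j(K)$ (the latter being compact), and $\|T_j - I_j\| \le \|T - T^{(k)}\| + \|T^{(k)}_j - I_j\|$ forces $\lim_j \|T_j - I_j\| = 0$. Norm-separability is obtained as in the proof of Lemma~\ref{lem:mfB-for-K''}: each $\rho_j(K)$ is a compact metric group, hence admits a countable dense subset $S_j$, and the tuples equal to $I_j$ for all but finitely many $j$ and lying in $S_j$ otherwise form a countable norm-dense subset of $\widehat{K}$. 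Being norm-closed in the norm-complete group $\U(\cH)$ and norm-separable, $\widehat{K}$ is then Polish.

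For the identity $\End_{\widehat{K}}(\cH) = \End_K(\cH)$, equivalently $\widehat{K}' = \rho(K)'$, I would first compute $\rho(K)' = \End_K(\cH)$. Since $\rho$ is multiplicity-free, the blocks $\cH_j$ are pairwise non-isomorphic irreducibles, so intertwiners between distinct blocks vanish and each diagonal block is scalar by Schur's Lemma; thus $\rho(K)' = \bigoplus_j \C I_j$, the algebra of block-scalar operators. The inclusion $\rho(K)' \subseteq \widehat{K}'$ is immediate, because a block-scalar operator commutes with every block-diagonal operator and every element of $\widehat{K}$ is block-diagonal. For the reverse inclusion I would take $S \in \widehat{K}'$ and exploit that $\widehat{K}$ contains, for each index $j$, all \emph{single-block} elements, i.e.\ those equal to $\rho_j(g)$ on $\cH_j$ and to the identity on every other block (these lie in $\widehat{K}$ since their deviation from the identity is supported on one block). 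Writing $S$ in its blocks $S_{ij}$, commutation with the single-block elements at $j$ forces $S_{jj} \in \rho_j(K)' = \C I_j$ by Schur, and, for $i \ne j$, forces $(\rho_i(g) - I_i)S_{ij} = 0 = S_{ij}(\rho_j(g) - I_j)$ for all $g \in K$. Nontriviality of $\rho_i$ or $\rho_j$ then annihilates $S_{ij}$, so $S \in \bigoplus_j \C I_j = \rho(K)'$, completing the equality.

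The main obstacle I anticipate is conceptual rather than computational, and it lies precisely in the reverse inclusion just described. One cannot argue, as in Theorem~\ref{thm:compact-vs-Polish}, by the cheap observation that one group is contained in the other: although every $\rho(g) = \bigoplus_j \rho_j(g)$ is block-diagonal, its blocks do \emph{not} tend to $I_j$ for fixed $g \ne e$, so $\rho(K) \not\subseteq \widehat{K}$ in general, and the inclusion of commutants is not available for free. This is exactly why the argument must instead be routed through the single-block elements, which \emph{are} available in $\widehat{K}$ and already impose enough constraints. A related subtlety to watch is the isotypic block (if any) on which $\rho_j$ is the trivial representation: there the single-block elements are all the identity and the fixed-vector space is the whole block, so the off-diagonal vanishing argument breaks down on that block alone. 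This causes no difficulty because at most one block can carry the trivial irreducible, so for any pair $i \ne j$ at least one of $\rho_i, \rho_j$ is nontrivial, which is all that the vanishing of $S_{ij}$ requires. Once these points are in place, $\widehat{K}' = \rho(K)'$, and hence $\End_{\widehat{K}}(\cH) = \End_K(\cH)$, as claimed.
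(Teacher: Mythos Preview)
Your proof is correct. The paper does not spell out a proof of this corollary, saying only that it ``uses the same arguments as above'' (those of Lemma~\ref{lem:mfB-for-K''} and Theorem~\ref{thm:compact-vs-Polish}). Your verification that $\widehat{K}$ is a norm-closed, norm-separable (hence Polish) subgroup of $\U(\cH)_\NN$ follows those arguments exactly. For the identity $\End_{\widehat{K}}(\cH)=\End_K(\cH)$, the paper's phrasing points toward showing $\widehat{K}''=\rho(K)''$ by the double-commutant reasoning of Lemma~\ref{lem:mfB-for-K''} (using that, by irreducibility, the linear span of $\rho_j(K)$ is already $\rho_j(K)''=M_{n_j}(\C)$, so the single-block elements of $\widehat{K}$ generate each block algebra); you instead compute $\widehat{K}'$ directly via block matrices and identify it with $\rho(K)'=\bigoplus_j\C I_j$. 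These two computations are dual to one another and equally valid. Your explicit observation that $\rho(K)\not\subset\widehat{K}$ in general---so that the inclusion of commutants is not available for free---and your treatment of the at-most-one trivial isotypic block are both correct and worth making explicit, since the paper leaves these points to the reader.
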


\subsection*{Acknowledgement}
This research was partially supported by SNI-Conacyt and Conacyt Grants 280732 and 61517.


\begin{thebibliography}{XX}

\bibitem{BourbakiGenTop5-10} Bourbaki, Nicolas: General topology. Chapters 5–10. Translated from the French. Reprint of the 1989 English translation. Elements of Mathematics (Berlin). Springer-Verlag, Berlin, 1998.

\bibitem{ConwayOT} Conway, John B.: A course in operator theory. Graduate Studies in Mathematics, 21. American Mathematical Society, Providence, RI, 2000.

\bibitem{DOQJFA} Dawson, Matthew; \'Olafsson, Gestur and Quiroga-Barranco, Raul: \emph{Commuting   Toeplitz   operators   on   bounded symmetric   domains   and   multiplicity-free restrictions   of   holomorphic   discrete   series}. J. Funct. Anal. 268 (2015), no. 7, 1711--1732.

\bibitem{DixmierCAlgebras} Dixmier, Jacques: $C^*$-algebras. North-Holland Mathematical Library, Vol. 15. North-Holland Publishing Co., Amsterdam-New York-Oxford, 1977. 

\bibitem{DixmierVonNeumann} Dixmier, Jacques: von Neumann algebras. North-Holland Mathematical Library, 27. North-Holland Publishing Co., Amsterdam-New York, 1981.

\bibitem{KRvolI} Kadison, Richard V. and Ringrose, John R.: Fundamentals of the theory of operator algebras. Vol. I. Elementary theory. Reprint of the 1983 original. Graduate Studies in Mathematics, 15. American Mathematical Society, Providence, RI, 1997. 

\bibitem{KRvolII} Kadison, Richard V. and Ringrose, John R.: Fundamentals of the theory of operator algebras. Vol. II. Advanced theory. Corrected reprint of the 1986 original. Graduate Studies in Mathematics, 16. American Mathematical Society, Providence, RI, 1997. 

\bibitem{MautnerAnnMathI} Mautner, F.~I.: \emph{Unitary representations of locally compact groups. I.} Ann. of Math. (2) 51 (1950), 1--25.

\bibitem{MautnerProcAMS} Mautner, F.~I.: \emph{On the decomposition of unitary representations of Lie groups}. Proc. Amer. Math. Soc. 2 (1951), 490--496.

\bibitem{QVUnitBall1} Quiroga-Barranco, Raul and Vasilevski, Nikolai: \emph{Commutative $C^*$-algebras of Toeplitz operators on the unit ball. I. Bargmann-type transforms and spectral representations of Toeplitz operators.} Integral Equations Operator Theory 59 (2007), no. 3, 379–419.

\bibitem{QVUnitBall2} Quiroga-Barranco, Raul and Vasilevski, Nikolai: \emph{Commutative $C^*$-algebras of Toeplitz operators on the unit ball. II. Geometry of the level sets of symbols.} Integral Equations Operator Theory 60 (2008), no. 1, 89--132.

\bibitem{TakesakiI} Takesaki, M.: Theory of operator algebras. I. Reprint of the first (1979) edition. Encyclopaedia of Mathematical Sciences, 124. Operator Algebras and Non-commutative Geometry, 5. Springer-Verlag, Berlin, 2002. 

\end{thebibliography}
\end{document}